%% file: main_article.tex
\documentclass[hidelinks,onefignum,onetabnum]{siamonline250211}

\newsiamremark{assumption}{Assumption}
\usepackage{MnSymbol}
\usepackage{multirow}
\usepackage{subcaption}
\usepackage{tablefootnote}
\usepackage{array}
\definecolor{Red}{rgb}{1,0.25,0.25}
\definecolor{Green}{rgb}{0.25,0.75,0.25}
\definecolor{Blue}{rgb}{0.1,0.5,1}

\newcommand{\adj}[1]{\operatorname{adj}(#1)}

\usepackage{amsmath}
\usepackage{amsfonts}
\usepackage{graphicx}
\usepackage{enumitem}

\usepackage{comment}
\usepackage{todonotes}

\usepackage{comment}
\usepackage[most]{tcolorbox} 

\input{ex_shared}

\ifpdf
\hypersetup{
  pdftitle={Blow-Up of a Tumour Model}
  pdfauthor={T. E. F. Lapuz and M. Wechselberger}
}
\fi

\begin{document}
 \title{Escape, Oscillatory Relapse or Elimination:\\ A Geometric Blow-up Analysis of a Tumour Model with Singular Bifurcations}
\author{Timothy Earl Figueroa Lapuz\thanks{School of Mathematics and Statistics, The University of Sydney, Camperdown NSW 2006, Australia} \and Martin Wechselberger\footnotemark[1]}
  
\maketitle

\begin{abstract}

The five-compartment tumour-immune model introduced by Kuznetsov et al.~captures critical clinical phenomena, including tumour dormancy, immune evasion (`sneaking through'), and immunostimulation. Recent studies by Osojnik et al.~highlighted the model's excitable and oscillatory dynamics, providing a mechanism for tumour relapse. In this paper, we recast these findings through the lens of \textit{geometric singular perturbation theory (GSPT)} and the \textit{blow-up method}, establishing a formal mathematical foundation for these phenomena. 

Because the model features a multiple-time-scale structure with dynamics separated by up to three orders of magnitude, standard Tikhonov-Fenichel theory is insufficient. To resolve this, we employ the \textit{parametrisation method} to systematically compute higher-order approximations of the slow vector fields. We introduce a methodological novelty by demonstrating that the parametrisation method is indispensable within the geometric blow-up analysis. Specifically, we show that higher-order correction terms to the local centre manifolds are required to match the vector field on the corresponding slow manifold. 

Using these combined tools, we rigorously prove the existence of stable relaxation oscillations and transient large excursions. We uncover a rich collection of singular bifurcations, including a saddle-node on invariant circle (SNIC) and two distinct singular Andronov-Hopf (sAH) bifurcations that drive complete and incomplete canard explosions. At the highly degenerate origin, our blow-up analysis reveals the spatial collision of a singular transcritical bifurcation and a nilpotent pseudo-singularity. We conjecture that this collision forms a novel \textit{singular transcritical Bogdanov-Takens (stBT)} organising centre. Ultimately, these geometric findings yield a precise bifurcation diagram in the parameter space of effector cell supply and death rates, partitioning clinical outcomes into tumour escape, oscillatory relapse, and tumour dormancy or elimination.
\end{abstract}

\begin{keywords}
multiple time scales, geometric singular perturbation theory, parametrisation method, blow-up analysis, relaxation oscillations, singular bifurcations, tumour modeling
\end{keywords}

\begin{MSCcodes}
34E13,34E15,37N25,34C26
\end{MSCcodes}

\section{Introduction}
\label{sec:intro}
Biological phenomena, such as the interaction between proliferating tumour cells and the immune system, frequently exhibit dynamics across vastly disparate time scales. In this paper, we revisit a five-compartment tumour model originally proposed by Kuznetsov et al.~\cite{kuznetsov}, and recently analysed by Osojnik et al.~\cite{osojnik}. The model captures critical clinical phenomena, including tumour dormancy, `sneaking through' (where small tumours escape immune control), and immunostimulation. Osojnik et al.~demonstrated that these dynamics manifest mathematically as excitable large excursions and relaxation oscillations, providing a dynamical mechanism for biological relapse, where high tumour levels return after extended periods of remission. 

Such multiple-time-scale phenomena are naturally suited for geometric singular perturbation theory (GSPT) \cite{tikhonov,fenichel, jones, kuehn2015, wechselberger2020} and the blow-up method \cite{dumortierroussarie1996, krupaszmolyan2001fold, szmolyanwechselberger}. The combination of GSPT and blow-up has been successfully employed to prove the existence of relaxation oscillations in various biological models \cite{kosiukszmolyan, kosiukszmolyan2, kosiukszmolyan3, process}. {However, the Kuznetsov et al. model presents a unique mathematical challenge. Unlike conventional singularly perturbed models, this model contains a multiple-time-scale structure, with scales ranging from $\mathcal{O}(1)$ (fast) to $\mathcal{O}(\varepsilon^3)$ (infra-infra-slow)}. The basic GSPT toolbox struggles to systematically capture dynamics across these disparate scales. To resolve this, we use the \textit{parametrisation method} {\cite{coulletspiegel,cabre20031, cabre20032, cabre2005,param}}, recently adapted for GSPT by Lizarraga et al.~\cite{multiple}, which recursively computes higher-order approximations of normally hyperbolic slow manifolds and their corresponding slow vector fields.

This multiple-time-scale behaviour reveals a rich collection of singular bifurcations that organise the global dynamics. {Within a \textit{nested} infra-slow critical manifold contained in a larger slow manifold, we identify a folded singularity giving rise to a singular Andronov-Hopf (sAH) bifurcation, alongside a saddle-node on invariant circle (SNIC) bifurcation. These bifurcations partition the parameter space into distinct regimes of excitable dynamics (associated with tumour escape) and oscillatory relapse. More strikingly, at the origin, normal hyperbolicity is severely lost. Our analysis uncovers a nilpotent pseudo-singularity, from which a second sAH arises, partitioning the parameter space between oscillatory relapse and tumour dormancy or elimination. Furthermore, we uncover a singular transcritical (sTC) bifurcation. Its spatial with the nilpotent pseudo-singularity is conjectured to form a novel {\emph{singular transcritical Bogdanov-Takens (stBT)} bifurcation.}}

The main contributions of this paper are threefold:
\begin{itemize}
   \item {We provide a formal geometric proof for the existence of stable relaxation oscillations and excitable large excursions, establishing rigorous mathematical mechanisms for tumour escape, oscillatory relapse, and dormancy.}
    \item {We derive analytic boundaries in the parameter space of effector cell supply and death rates, delineating regions for distinct patient outcomes. Furthermore, we describe how the sAH bifurcations generate complete and incomplete canard explosions.}
    \item {We extend the utility of the parametrisation method beyond classical Tikhonov-Fenichel theory by demonstrating how it facilitates the successful application of the blow-up method. Specifically, we show that higher-order correction terms to the centre manifolds in both an entry chart and an exit chart are required. This technical necessity ensures that the local vector fields on the centre manifolds perfectly match the slow vector field on the corresponding slow manifold, a matching condition that strictly relies on the parametrisation method. To our knowledge, this requirement has not been explicitly addressed in previous applications.}
\end{itemize}

The remainder of the paper is structured as follows. Section \ref{model} introduces the model, establishes the non-standard singular perturbation framework, and presents a numerical bifurcation analysis. Section \ref{sec:singular} applies the parametrisation method to extract the nested slow flows on the slow manifolds. Section \ref{sec:bifs11} analyses the singular bifurcations on these manifolds, including a saddle-node (SN) bifurcation that becomes a SNIC bifurcation, and an sAH bifurcation that triggers an associated canard explosion. Section \ref{sec:blowup_tumour} deploys the geometric blow-up method at the conjugate $C$-axis and the origin to prove the global return mechanism, culminating in our main existence theorem for stable relaxation oscillations. Finally, Section \ref{sec:degenerate bifurcations} investigates the highly degenerate bifurcations near the origin, revealing an sAH and an sTC, as well as the conjectured stBT organising centre, before we conclude in Section \ref{conclusion}.

\section{The model}
\label{model}
Consider the five-compartment model proposed by Kuznetsov et al. \cite{kuznetsov} and revisited by Osojnik et al. \cite{osojnik},
\begin{align}
    \frac{d \mathcal{E}}{dT} &= s + \frac{f \mathcal{C}}{g + \mathcal{M}} - d_1 \mathcal{E} - k_1 \mathcal{E} \mathcal{M} + (K - k_3) \mathcal{C}, \,\, \quad \quad \quad \textcolor{gray}{\frac{d\mathcal{E}^*}{dT} = k_3 \mathcal{C} - d_2 \mathcal{E}^*} \nonumber \\
    \frac{d \mathcal{M}}{dT} &= a \mathcal{M} (1 - b \mathcal{M}) - k_1 \mathcal{E} \mathcal{M} + (K-k_{2}) \mathcal{C}, \quad \quad \quad \quad \textcolor{gray}{\frac{d\mathcal{M}^*}{dT} = k_2 \mathcal{C} - d_3 \mathcal{M}^*}  \nonumber \\
    \frac{d\mathcal{C}}{dT} &= k_1 \mathcal{E} \mathcal{M} - K \mathcal{C} \label{3D} 
\end{align}
where $\mathcal{E}, \mathcal{M}, \mathcal{C}, \mathcal{E}^*, \mathcal{M}^*$ are local concentrations of effector cells, tumour cells, intermediate conjugate, inactivated effector cells, and lethally hit tumour cells, respectively (see Figure~\ref{fig:tumour_diagram} for a schematic diagram). Following \cite{osojnik}, we define $K := k_{-1} + k_2 + k_3$. The initial conditions (ICs) for system \eqref{3D} are $\mathcal{E}(0) = \mathcal{E}_0, \, \mathcal{M}(0) = \mathcal{M}_0$ with all other compartments initially zero. Parameter values are given in Table \ref{parameter_values}.

\begin{figure}[ht]
\centering
 \includegraphics[width=0.475\linewidth]{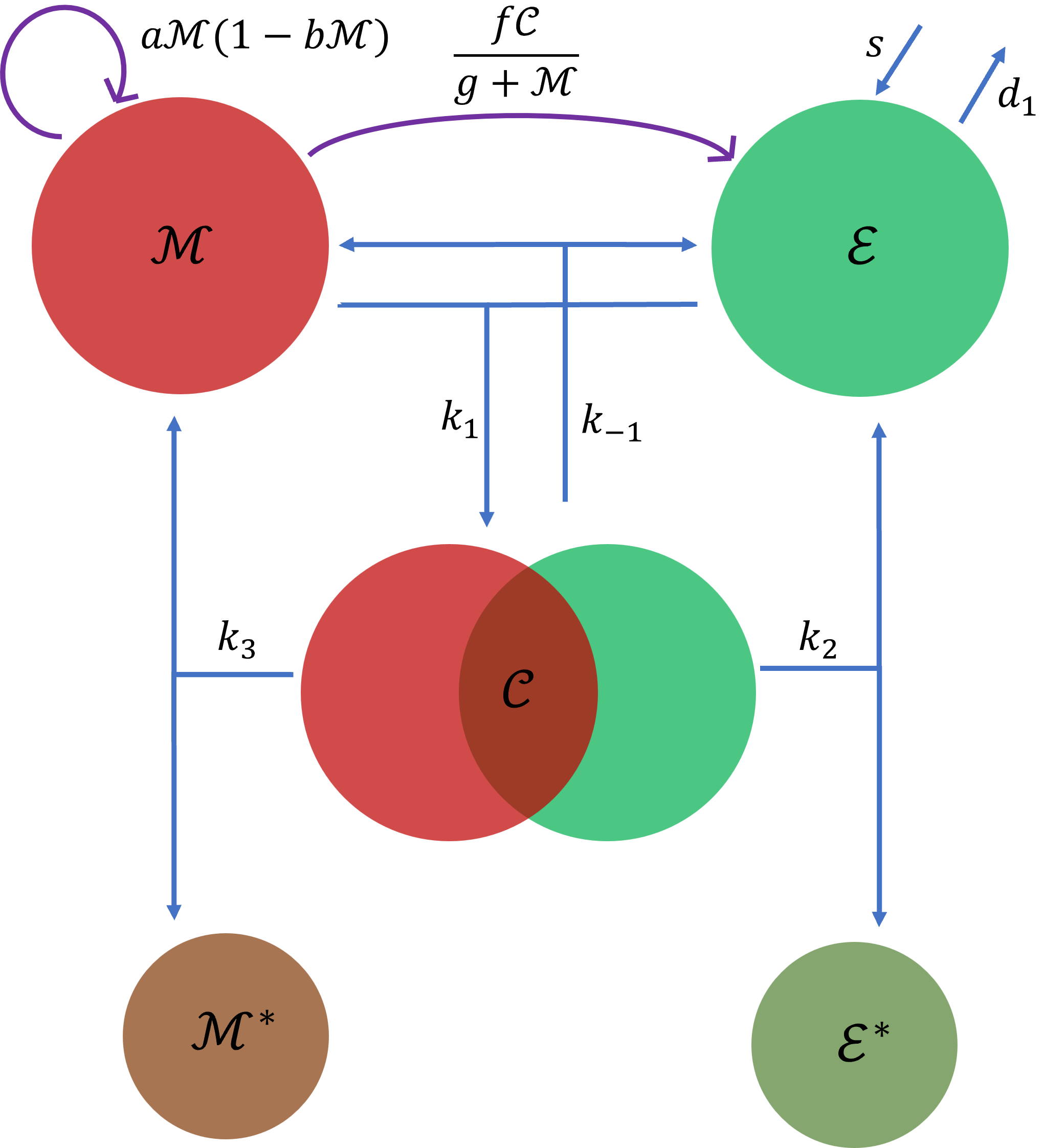}
  \caption{Schematic diagram of the tumour cell $\mathcal{M}$ and effector cell $\mathcal{E}$ dynamics as described in the main text. Blue lines denote reactions that are mass action while purple lines are beyond mass action.}
\label{fig:tumour_diagram}
\end{figure}

\begin{table}[ht]
\centering
\begin{tabular}{ |c|c| }
 \hline
Parameter & Value \\ \hline
$a$ & 0.18 day$^{-1}$ \\
$b$ & $2.0\times 10^{-9}$ cell concentration$^{-1}$ \\ 
$d_1$ & $0.0412$ day$^{-1}$ \\
$f$ & $1.245 \times 10^5$ cell concentration day$^{-1}$ \\
$g$ & $2.019\times 10^7$ cell concentration \\
$k_2$ & $0.1101$ day$^{-1}$ \\
$k_3$ & $3.422 \times 10^{-4}$ day$^{-1}$ \\
$s$ & $1.3\times10^4$ cell concentration day$^{-1}$ \\
$k_1$ & $10^{-4}$ cell concentration$^{-1}$ day$^{-1}$ \\
$k_{-1}$ & $99.900$ day$^{-1}$ \\
 \hline
\end{tabular}
\caption{Dimensional parameter values, adapted from \cite{kuznetsov,osojnik} based on theoretical estimates and mouse spleen tumour growth data \cite{siuetal}. {Since exact values for $k_1$ and $k_{-1}$ were not specified in the original literature, the values chosen here were selected to satisfy the three constraints established by \cite{osojnik}: (i) $\frac{k_{-1} + k_2 + k_3}{k_1} \approx 10^6$, (ii) $k_1 \times 10^{6} = \mathcal{O}(k_{-1})$, and (iii) $k_2, k_3, s\times 10^{-6}, \frac{fk_1 }{k_{-1} + k_2 + k_3}, d_1, a \ll k_{-1}$.}} \label{parameter_values}
\end{table}

Since the dynamics of $\mathcal{E}^*$ and $\mathcal{M}^*$ decouple from the primary variables, we restrict our analysis to the 3D subsystem $(\mathcal{E}, \mathcal{M}, \mathcal{C})$. While most terms in \eqref{3D} invoke standard mass action, the tumour-stimulated effector supply $\frac{f \mathcal{C}}{g+ \mathcal{M}}$ and the logistic tumour growth $a \mathcal{M}(1-b\mathcal{M})$ are phenomenological choices by Kuznetsov et al. \cite{kuznetsov}. Mathematically, \eqref{3D} may be viewed as a reduced chemical reaction network \cite{lapuzwechselberger}. Numerical simulations presented in Figure \ref{time_trace} suggest that system \eqref{3D} undergoes multiple timescale dynamics; for these simulations, we set $s=5\times 10^4$ to ensure the system is in the oscillatory regime.

\begin{figure}[ht]
\centering
\begin{subfigure}{0.45\textwidth}
\centering
 \includegraphics[width=1\linewidth]{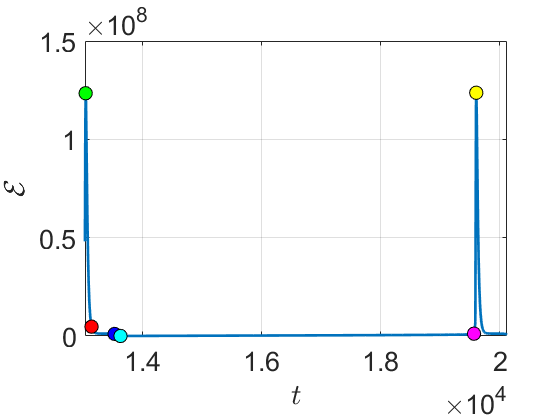}
  \caption{}
\end{subfigure}
\begin{subfigure}{0.45\textwidth}
\centering
  \includegraphics[width=1\linewidth]{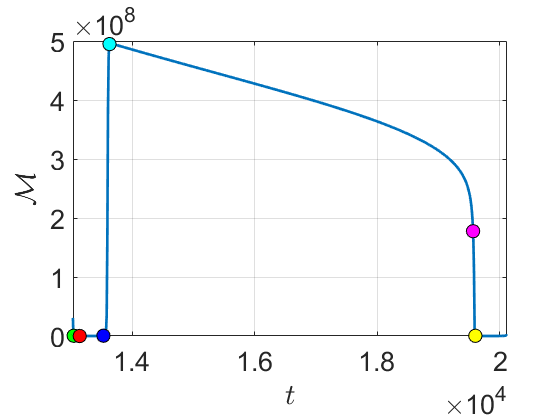}
  \caption{}
\end{subfigure}
\begin{subfigure}{0.45\textwidth}
\centering
  \includegraphics[width=1\linewidth]{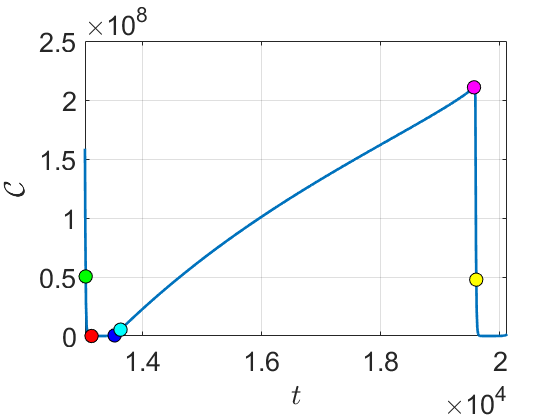}
  \caption{}
\end{subfigure}
\begin{subfigure}{0.45\textwidth}
\centering
  \includegraphics[width=1\linewidth]{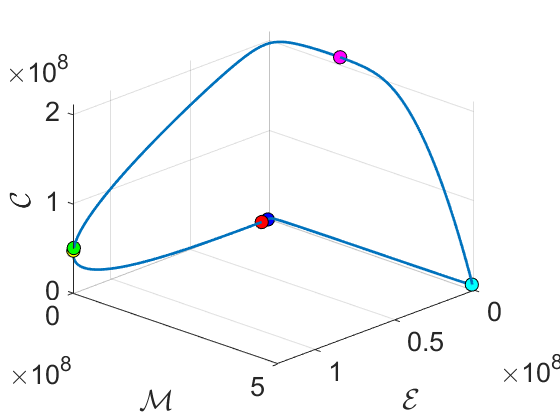}
  \caption{}
\end{subfigure}
\caption{(a)-(c) Time traces of system \eqref{3D} for $\mathcal{E},\mathcal{M},\mathcal{C}$. (d) Phase portrait. ICs: $(\mathcal{E}_0 ,\mathcal{M}_0, \mathcal{C}_0) = (10^6,10^2,0)$ with $s = 5\times 10^4$ to induce oscillations. During a single period of $\mathcal{O}(10^4)$ time units, the system undergoes a rapid drop in $\mathcal{E}$ leading to a fast escape of $\mathcal{M}$. The trajectory then tracks a slow manifold as $\mathcal{C}$ gradually increases (conjugate formation), terminating in a fast drop in $\mathcal{M}$ and $\mathcal{C}$ coupled with a rapid influx of $\mathcal{E}$.}
\label{time_trace}
\end{figure}

\subsection{Nondimensionalisation and singular perturbation setup}
To prepare system \eqref{3D} for multiple time scale analysis, we nondimensionalise the coordinates $(\mathcal{E},\mathcal{M},\mathcal{C})$ and time $T$ (derivations and reference scales are detailed in Section SM1 of Supplementary Material I). To remove rational terms, we apply a topological equivalence time desingularisation $dt = (M+\delta)^{-1}d\tau$. 
We identify the primary singular perturbation parameter as $\varepsilon := \frac{Kb}{k_1} \ll 1$. The remaining dimensionless parameters are grouped by their asymptotic scaling with $\varepsilon$:
\begin{align}
    \delta = bg, \quad \tilde{\alpha}_{1} &= \varepsilon^3 \alpha_{1}, \quad \tilde{\alpha}_{2} = \varepsilon^3 \alpha_{2}, \quad \tilde{\beta}_{1} = \varepsilon^2 \beta_{1}, \nonumber \\ 
    \tilde{\beta}_{2} &= \varepsilon^2 \beta_{2}, \quad \tilde{\gamma}_1 = \varepsilon^3 \gamma_1, \quad \tilde{\gamma}_2 = \varepsilon^2 \gamma_2, \label{nondim_parameters2}
\end{align}
yielding the $\mathcal{O}(1)$ parameter values summarised in Table \ref{parameter_values_new}.

\begin{table}[ht]
\centering
\begin{tabular}{ |c|c|c|c| } 
 \hline
Parameter & Value & Parameter & Value  \\ \hline
$\delta$ & $4.038 \times 10^{-2}$ & ${\beta}_2$ & $0.9$\\
$\varepsilon $ & $2 \times 10^{-3}$ & ${\gamma}_1$ & $0.8555$ \\
${\beta}_1$ & $0.2060$ & ${\gamma}_2$ & $0.5505$ \\
${\alpha}_1$ & $0.065$ & ${\alpha}_2$ & $0.6225$\\
\hline
\end{tabular}
\caption{Dimensionless parameter values at $\mathcal{O}(1)$, scaled by $\varepsilon$.} \label{parameter_values_new}
\end{table}

\begin{remark} \label{remark:osojnik_differences} 
{The approach in \cite{osojnik} first selects a small parameter for a reduction to 2D, then introduces ad-hoc small parameters for further analysis of excitability. Here, $\varepsilon := Kb/k_1$ is defined from the dimensional kinetics and all reductions made are based on powers of $\varepsilon$.}
\end{remark}

We thus cast the system into a non-standard singular perturbation problem in $\varepsilon$:
\begin{align}
\begin{pmatrix}
E' \\ M' \\ C'
\end{pmatrix} &= N_0 f_0 +  \varepsilon F_1 + \varepsilon^2 F_2 + \varepsilon^3 F_3 \nonumber \\
&= \begin{pmatrix}
    -1 \\ -1 \\ 1
\end{pmatrix} EM(M + \delta ) + \varepsilon \begin{pmatrix} 1 \\ 1 \\ -1
\end{pmatrix} (M+\delta) C \nonumber \\ &+ \varepsilon^2 \begin{pmatrix}
    -\delta \beta_1 E - \beta_1 E M \\ -\delta \gamma_2 C + \beta_2 M^2 (1-M) - \gamma_2 M C +  \delta \beta_2  M (1-M)\\ 0
\end{pmatrix} \nonumber \\
&+ \varepsilon^3 \begin{pmatrix}
    \alpha_2 C + \alpha_1 M + \alpha_1 \delta - \gamma_1 M C - \delta \gamma_1 C
    \\ 0 \\ 0
\end{pmatrix},
\label{full_single_epsilon}
\end{align}
where prime denotes the derivative with respect to the desingularised time $t$. {The corresponding layer problem possesses a transverse intersection of invariant planes at the conjugate cell $C$-axis, signifying a loss of normal hyperbolicity. Since Tikhonov-Fenichel theory \cite{tikhonov,fenichel,jones,kuehn2015,wechselberger2020} does not apply in such cases, we will employ the blow-up method in order to resolve this loss in the subsequent sections.}

\subsection{Numerical bifurcation diagram}
The parameter $\alpha_1 \propto s$ represents the nondimensional supply rate of effector cells arriving at the tumour microenvironment, making it a critical control parameter that varies between patients and during immunotherapy \cite{osojnik}. Figure \ref{1Dbif} shows the bifurcation diagram under the variation of $\alpha_1$.
\begin{figure}[ht]
\begin{subfigure}{0.475\textwidth}
\centering
  \includegraphics[width=1\linewidth]{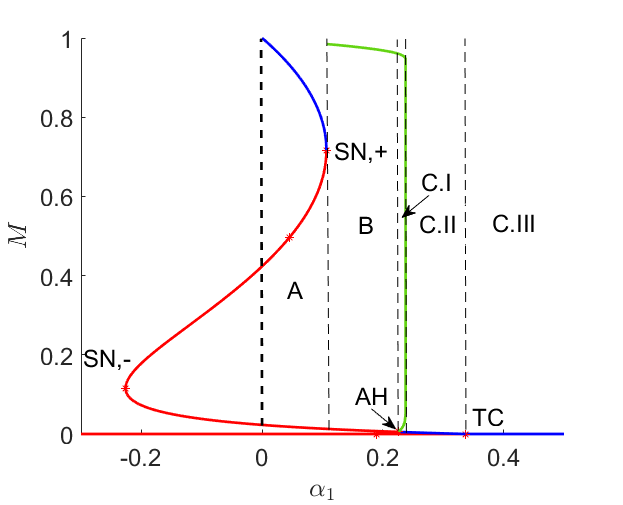}
  \caption{}
\end{subfigure}
\begin{subfigure}{0.475\textwidth}
\centering
  \includegraphics[width=1\linewidth]{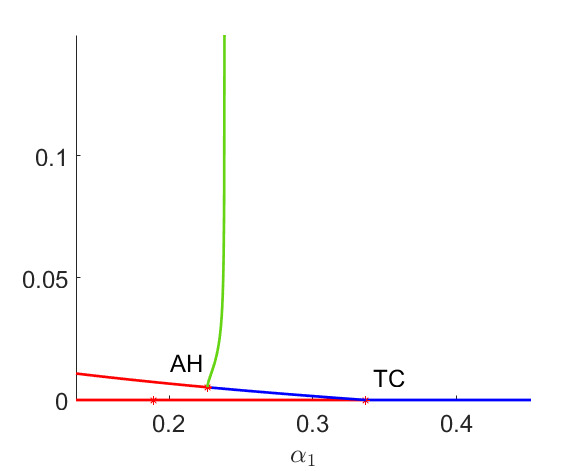}
  \caption{}
\end{subfigure}
\caption{Bifurcation diagram of system \eqref{full_single_epsilon} under the variation of $\alpha_1$. Other parameters are in Table \ref{parameter_values_new}, except we have relaxed $\varepsilon = 0.01$. The bifurcations shown are two saddle-node bifurcations (labelled SN), one subcritical Andronov-Hopf bifurcation (labelled AH), and one transcritical bifurcation (labelled TC); these were numerically obtained from MatCont \cite{matcont}. 
Continuation of the AH limit cycles suggests there is a saddle-node of periodic orbits (SNPO) on the boundary of C.I and C.II. COCO \cite{coco} was used for the limit cycle continuation. 
(a) Partitions the bifurcation diagram into regimes defined in the main text. (b) A close-up near the AH and TC bifurcations.}
\label{1Dbif}
\end{figure}
There are up to four equilibria: the tumour-free state $\mathcal{EQ}_{tf} = (E_{tf},0,0) = \left(\varepsilon \frac{\alpha_1}{\beta_1},0,0\right)$, the small-tumour state $\mathcal{EQ}_s = (E_s,M_s,C_s)$, and two large-tumour states $\mathcal{EQ}_{l,-}$ and $\mathcal{EQ}_{l,+}$ (where $M_{l,+} > M_{l,-}$). 

We can partition the bifurcation diagram into various regions based on numerical observations:
\begin{itemize}
    \item \textit{Region A (tumour escape):} $\mathcal{EQ}_{l,+}$ is the unique stable attractor. Initial conditions may undergo transient large-amplitude excursions before settling here.
    \item \textit{Region B (oscillatory relapse):} The large-tumour equilibria vanish via a saddle-node (SN) bifurcation. Solutions are asymptotically attracted to large-amplitude relaxation oscillations.
    \item \textit{Region C.I (sneaking through / bistability):} A subcritical Andronov-Hopf (AH) bifurcation stabilises $\mathcal{EQ}_s$ and generates an unstable limit cycle. This creates bistability: low initial tumour loads are trapped near $\mathcal{EQ}_s$, while larger loads cross the unstable cycle, and escape to the large relaxation oscillations.

    \item \textit{Region C.II \& C.III (tumour dormancy / elimination):} The large oscillations are destroyed by a saddle-node of periodic orbits (SNPO). $\mathcal{EQ}_{tf}$ and $\mathcal{EQ}_s$ exchange stability via a transcritical (TC) bifurcation. These equilibria are the global attractors depending on which equilibrium is stable; see Figure \ref{1Dbif}b.
\end{itemize}
  {The boundary between C.I and C.II provides a partitioning for undesirable and desirable patient outcomes, where (i) $0 \leq \alpha_{1} < \alpha_{1,SNPO}$, solutions approach the stable large-tumour equilibrium (tumour escape) or undergo large amplitude oscillations, and (ii) $\alpha_{1} > \alpha_{1,SNPO}$, solutions approach the small-tumour equilibrium (tumour dormancy) or tumour-free equilibrium (tumour elimination); see also \cite{osojnik}.}
    
 In the forthcoming analysis, we prove that solutions approach these attracting sets (i.e., an equilibrium or the relaxation oscillation) of system \eqref{full_single_epsilon}.

\section{Multiple time scale analysis}
\label{sec:singular}
Taking the singular limit $\varepsilon \to 0$ in \eqref{full_single_epsilon} yields the layer problem. The critical manifolds are described by ${S}_0=\{(E,M,C) \in \mathbb{R}^3 \mid f_0(E,M,C) = 0\}$ and comprises of two biologically relevant, non-negative invariant planes: the effector-free manifold ${S}_{0,E} = \{(E,M,C) \in \mathbb{R}^3_{\geq 0} \mid E=0\}$ and the tumour-free manifold ${S}_{0,M} = \{(E,M,C) \in \mathbb{R}^3_{\geq 0} \mid M=0\}$ (ignoring the non-physical manifold $M = -\delta$). The corresponding fast fibre bundle $\mathcal{W}_0$ of these invariant planes is given by constant linear fibres aligned with $\mathcal{N}_0 = \text{span}\{(-1,-1,1)^\top\}$. 

Evaluating the nontrivial eigenvalues of the layer problem along the invariant planes give:
\begin{align*}
    Df_0 N_0|_{{S}_{0,E}} =  -M(M + \delta), \quad Df_0 N_0|_{{S}_{0,M}} = - \delta E.
\end{align*}
Hence, ${S}_{0,E}$ is normally hyperbolic and attracting for all $M > 0$, and ${S}_{0,M}$ is normally hyperbolic and attracting for all $E > 0$. A loss of normal hyperbolicity occurs at their transverse intersection along the conjugate-axis, $L_0 = {S}_{0,E} \cap {S}_{0,M} = \{(0,0,C) \mid C \geq 0\}$, necessitating a blow-up analysis (see Figure \ref{3D_layer_scalingB}).

\begin{figure}[ht]
\centering
\includegraphics[width=0.45\linewidth]{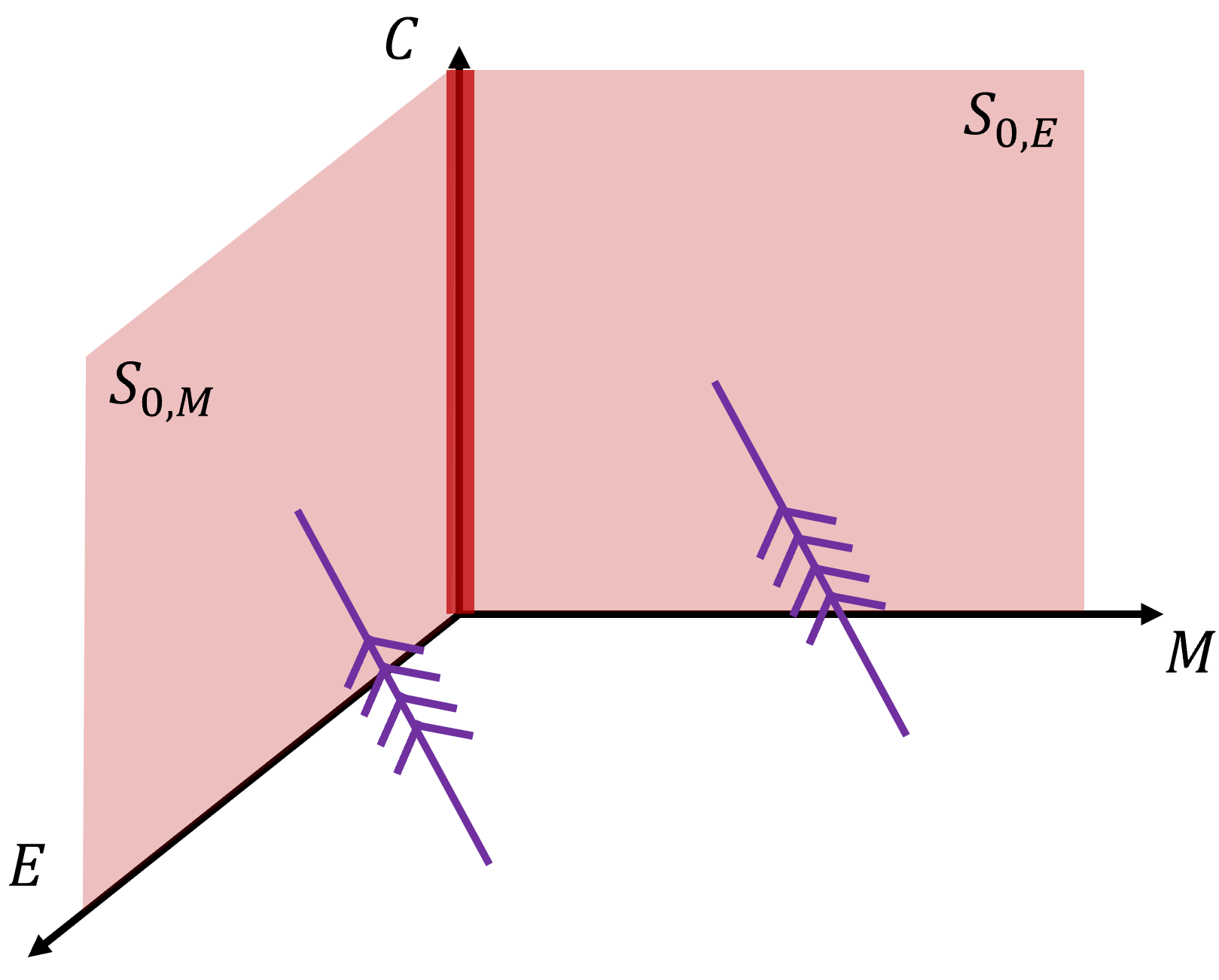}
  \caption{Sketch of the layer problem of system \eqref{full_single_epsilon}. The dark red line ($L_0$) indicates the loss of normal hyperbolicity.}
\label{3D_layer_scalingB}
\end{figure}

\subsection{The slow dynamics} 
\label{sec:slow}
To compute the slow flow on the normally hyperbolic invariant manifolds ${S}_{0,E}^\varepsilon$ and ${S}_{0,M}^\varepsilon$ (away from $L_0$), we utilise the parametrisation method for GSPT \cite{multiple,lapuzwechselberger}. This approach iteratively calculates the higher-order $\varepsilon$-corrections guaranteed by Tikhonov-Fenichel theory \cite{fenichel}. We state the resulting reduced vector fields here up to the required order; detailed derivations are deferred to Section SM2 of Supplementary Material I.

\subsubsection{Dynamics on the tumour-free slow manifold ${S}_{0,M}^\varepsilon$}
\label{zero_E}
The parametrisation method gives the following $\mathcal{O}(\varepsilon^2)$ slow vector field on ${S}_{0,M}^\varepsilon$:
\begin{align}
    \begin{pmatrix}
    E' \\ C'
    \end{pmatrix} =  \varepsilon^2 \begin{pmatrix}
        \delta(\gamma_2 C- \beta_1 E) \\-\delta \gamma_2 C
    \end{pmatrix}  + \mathcal{O}(\varepsilon^3),\label{reduction_M0}
\end{align}
where prime denotes the derivative with respect to $\tau$. The leading-order truncation has a unique, globally asymptotically stable equilibrium at the origin, corresponding to the tumour-free equilibrium $\mathcal{EQ}_{tf} = \left(\varepsilon \frac{\alpha_1}{\beta_1},0,0 \right)$ of the full system \eqref{full_single_epsilon}. The origin's corresponding eigenvalues are $\lambda_1 = - \beta_1 \delta \varepsilon^2$ and $\lambda_2 = - \delta \gamma_2 \varepsilon^2$. 

For the case $\beta_1 < \gamma_2$ (which is the case in Table \ref{parameter_values_new}), the strong stable eigendirection drives trajectories toward the weak stable subspace $W^s_{sub} = \{M=0, C=0\}$, after which they slowly funnel into the origin; see Figure~\ref{solution_M0_fig}a. {Importantly, $W^s_{sub}$ is a 1D stable invariant subspace of the full 3D system \eqref{full_single_epsilon}.}

Since all trajectories on ${S}_{0,M}^\varepsilon$ approach the origin, which lies on the degenerate line $L_0$, we must employ the blow-up method to resolve the flow here (detailed in Sections \ref{sec:blowup_tumour} and \ref{sec:degenerate bifurcations}). The case $\beta_1 > \gamma_2$ is also shown in Figure \ref{solution_M0_fig}b.

\begin{figure}[ht]
\centering
\begin{subfigure}{0.35\textwidth}
\centering
  \includegraphics[width=1\linewidth]{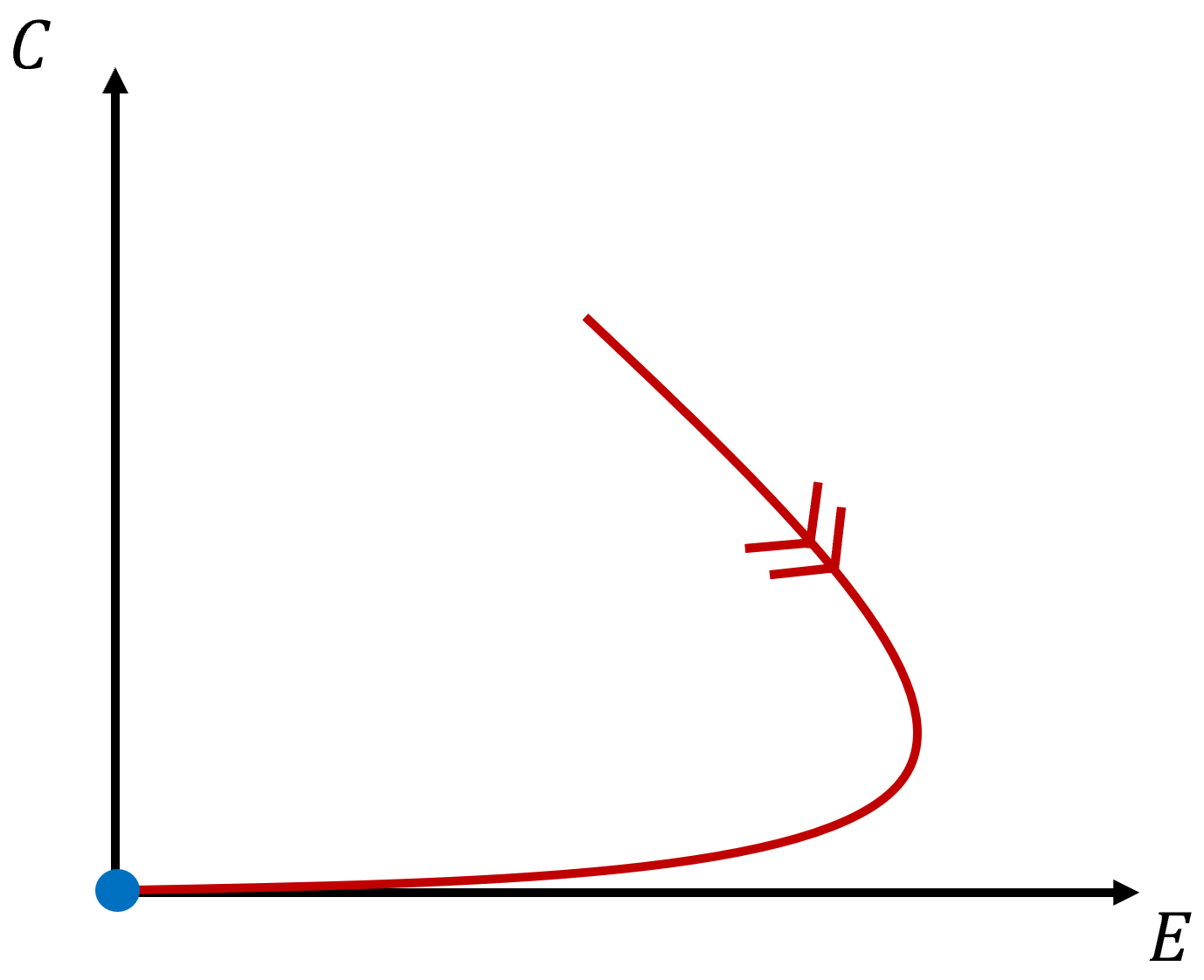}
  \caption{}
\end{subfigure}
\begin{subfigure}{0.35\textwidth}
\centering
\includegraphics[width=1\linewidth]{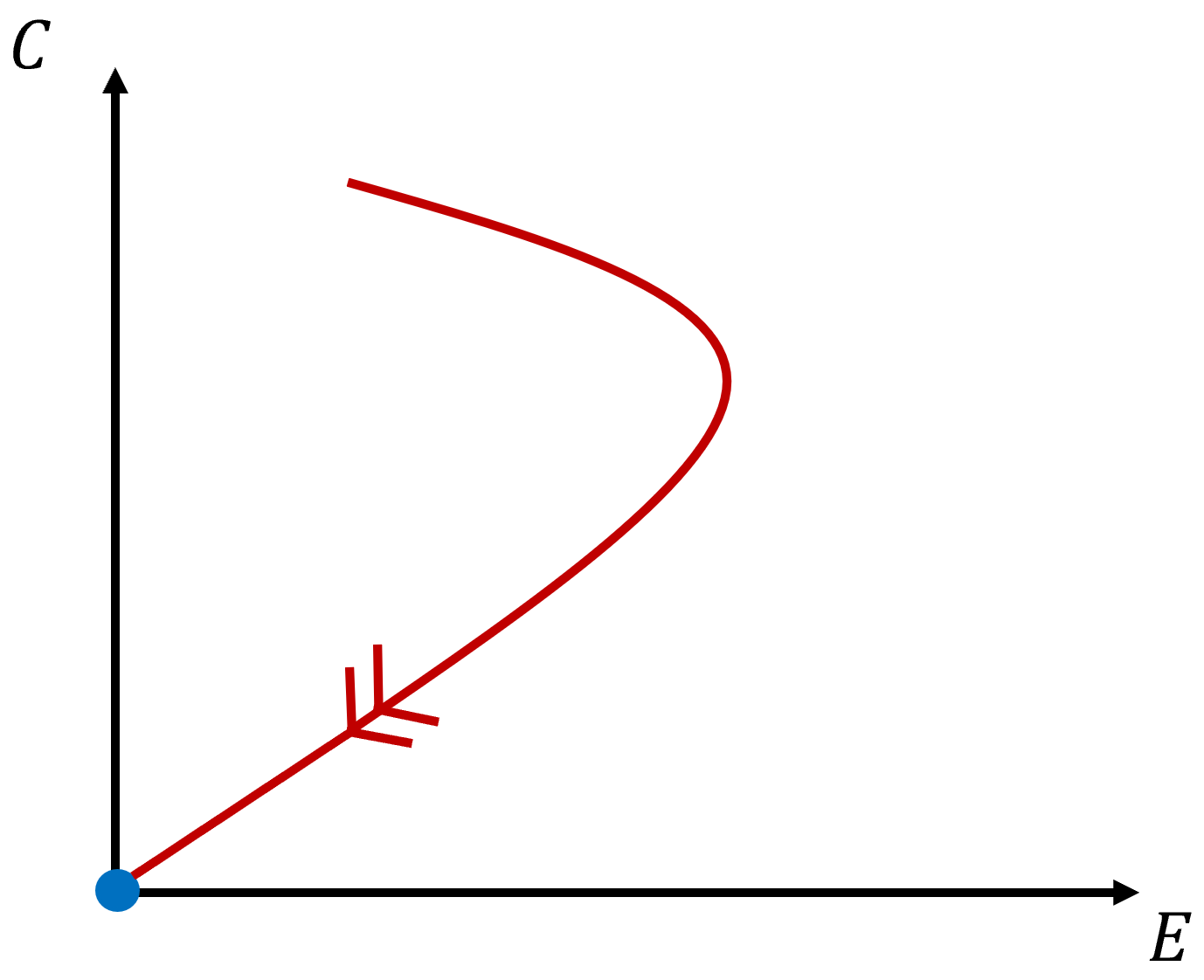}
  \caption{}
\end{subfigure}
  \caption{Sketch of a solution to system \eqref{reduction_M0}. (a) $\beta_1 < \gamma_2$. (b) $\beta_1 > \gamma_2$.}
\label{solution_M0_fig}
\end{figure}

\subsubsection{Dynamics on the effector-free slow manifold ${S}_{0,E}^\varepsilon$}
The reduced vector field on ${S}_{0,E}^\varepsilon$ takes the form:
\begin{align}
    \begin{pmatrix}
    M' \\ C'
    \end{pmatrix} &= \varepsilon^2 N_0^{(1)} f_0^{(1)} + +\varepsilon^3 R_{E,3} \nonumber \\
    &=\varepsilon^2 \begin{pmatrix}
        -1 \\ 0
    \end{pmatrix} (M+\delta)(\beta_2 M^2 -\beta_2 M + \gamma_2 C) +\varepsilon^3 R_{E,3}. \label{reduced_again2}
\end{align}
Importantly, system \eqref{reduced_again2} is \emph{itself} singularly perturbed, revealing a third time scale. Treating $\varepsilon^2$ as the new fast time scale, the secondary layer problem features a set of equilibria $S_1$, with its biologically relevant critical manifold $S_{1,1}$ given by the parabolic curve:
\begin{align} \label{eq:fold_tumour}
   {S}_{1,1} = \left\{ (M,C) \in \mathbb{R}^2_{\geq 0} \, \Bigg| \, C = \frac{\beta_2}{\gamma_2} M(1-M) \right\}.
\end{align}
The nontrivial eigenvalue of the secondary layer problem along ${S}_{1,1}$ is $\lambda^{(1)} = \beta_2 (1-2 M)(M+ \delta)$. Thus, ${S}_{1,1}$ is attracting for $M > \frac{1}{2}$ and repelling for $M < \frac{1}{2}$. Normal hyperbolicity is lost at $M_F = \frac{1}{2}$, which corresponds to a generic saddle-node bifurcation in the layer problem:
\begin{lemma} \label{lemma:fold_point}
    The point $(M,C)_F = \left(\frac{1}{2}, \frac{\beta_2}{4\gamma_2}\right) $ is a fold point of the layer problem associated with \eqref{reduced_again2}. 
\end{lemma}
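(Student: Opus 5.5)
The plan is to verify the standard nondegeneracy conditions for a generic fold (saddle-node) point of a planar slow--fast system. We write the secondary layer problem of \eqref{reduced_again2} in the fast time $\tau_1=\varepsilon^2\tau$, keeping only the leading order:
\begin{align*}
M' = g(M,C) := -(M+\delta)\left(\beta_2 M^2-\beta_2 M+\gamma_2 C\right), \qquad C' = 0 .
\end{align*}
Here $M$ is the fast variable and $C$ plays the role of the parameter (slow variable) of the layer problem. A fold point $(M_F,C_F)$ is characterised by four conditions: $g=0$, $\partial_M g=0$, $\partial_M^2 g\neq 0$ and $\partial_C g\neq 0$, all evaluated at that point. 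The last two are the quadratic nondegeneracy and the transversality (parameter unfolding) conditions of the saddle-node.

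The verification proceeds in order. First, since $M+\delta>0$ on the relevant domain, $g=0$ is equivalent to $h(M,C):=\beta_2M^2-\beta_2M+\gamma_2C=0$, which is exactly $S_{1,1}$ in \eqref{eq:fold_tumour}. Substituting $M=\tfrac12$ gives $C_F=\beta_2/(4\gamma_2)$. Second, on $\{h=0\}$ the product rule gives $\partial_M g=-(M+\delta)\partial_M h=-(M+\delta)\beta_2(2M-1)$. This matches the eigenvalue $\lambda^{(1)}$ stated above and vanishes exactly at $M_F=\tfrac12$. Third, at the fold point, where $h=\partial_Mh=0$, we compute $\partial_M^2 g=-(M_F+\delta)\partial_M^2 h=-2\beta_2(\tfrac12+\delta)\neq0$, because $\beta_2,\delta>0$. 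Fourth, $\partial_C g=-(M_F+\delta)\gamma_2\neq 0$, because $\gamma_2>0$.

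Geometrically, these conditions say that $S_{1,1}$ is locally a parabola $C=C_F-\tfrac{\beta_2}{\gamma_2}(M-\tfrac12)^2$ opening in $C$. The attracting branch $M>\tfrac12$ and the repelling branch $M<\tfrac12$ meet at this point, and the one-dimensional fast flow undergoes a nondegenerate saddle-node as $C$ crosses $C_F$.

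There is no real obstacle in this argument. The only point requiring care is to frame the statement correctly: the fold conditions are checked for the leading-order $\varepsilon^2$ vector field, with $C$ frozen as the slow variable. The $\varepsilon^3R_{E,3}$ term does not enter the layer problem. Its role, namely the sign of the slow drift $C'$ at the fold that decides between a generic jump point and a folded singularity, belongs to the subsequent analysis, not to this lemma.
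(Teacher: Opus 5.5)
Your proposal is correct and follows the paper's proof: both check that $g$ and $\partial_M g$ vanish at $(M,C)_F$, and that the transversality condition $\partial_C g\neq 0$ and the quadratic nondegeneracy $\partial_M^2 g\neq 0$ hold there, with $C$ as the bifurcation parameter. The only difference is that you carry the overall minus sign from $N_0^{(1)}$, whereas the paper computes the derivatives of $f_0^{(1)}$ directly and obtains $\gamma_2(\delta+\tfrac12)$ and $2\beta_2(\delta+\tfrac12)$.
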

\begin{proof}
    We verify the standard non-degeneracy conditions for a fold with $C$ as the bifurcation parameter: evaluated at $(M,C)_F$, the vector field and its first $M$-derivative vanish, while the transversality condition $D_C f_{0}^{(1)} = \gamma_2(\delta + \frac{1}{2}) \neq 0$ and non-degeneracy condition $D_{MM} f_{0}^{(1)} = 2\beta_2 (\delta + \frac{1}{2}) \neq 0$ hold.
\end{proof}
\paragraph*{Slow dynamics on the nested 1D manifold ${S}_{1,1}$}
Using the parametrisation method once more, we compute the $\mathcal{O}(\varepsilon^3)$ flow on the 1D manifold ${S}_{1,1}$, which is given by
\begin{align}
    \dfrac{dM}{d t} &=  \varepsilon^3 \dfrac{H(M)}{1-2M}  
    =  \varepsilon^3 \Bigg( \dfrac{(M-1)(\beta_1 M - \alpha_2 M + \beta_1 \delta + \gamma_1 M^2 + \delta \gamma_1 M)}{1-2M} + \dfrac{\alpha_1 \gamma_2 (M+\delta)}{\beta_2 (1-2M)} \Bigg). \label{reduced_scalingB_1D1}
\end{align}
The roots of \eqref{reduced_scalingB_1D1} yield up to three equilibria $M_s, M_{l,-}, M_{l,+}$ corresponding to the full system's small-tumour state $\mathcal{EQ}_s$ and large-tumour states $\mathcal{EQ}_{l,-}, \mathcal{EQ}_{l,+}$ respectively. Figure \ref{scalingB_1D_bifurcation_E0} displays the 1D bifurcation diagram for \eqref{reduced_scalingB_1D1}. 

{By combining the stability of these roots with the transversal eigenvalue signatures of the critical manifold, we map the full 3D stability: (i) $\mathcal{EQ}_s = (-,+,+)$ and (ii) $\mathcal{EQ}_{l,+} = (-,-,-)$. Here, the first entry corresponds to the attraction toward $S_{0,E}^\varepsilon$, the second entry depends on whether the equilibrium lies on the attracting or repelling branch of $S_{1,1}$, and the third entry corresponds to the stability on the reduced 1D problem \eqref{reduced_scalingB_1D1}. The symbol `$+$' indicates a positive eigenvalue and `$-$' indicates a negative real part eigenvalue.}
{Note, the eigenvalue signature of the equilibrium $\mathcal{EQ}_{l,-}$ depends on its position relative to the fold $M_F = 1/2$. If $M_{l,-} < M_F$, it lies on the repelling branch of $S_{1,1}$ and is stable relative to the reduced system \eqref{reduced_scalingB_1D1}, yielding the signature $\mathcal{EQ}_{l,-} = (-,+,-)$. Conversely, if $M_{l,-} > M_F$, it lies on the attracting branch of $S_{1,1}$ and is unstable in the reduced system, yielding $\mathcal{EQ}_{l,-} = (-,-,+)$.}

{Applying the time desingularisation $d\tilde{\tau} = (1-2M)^{-1} d\tau$ removes the singularity at $M_F$ (while reversing the flow on the attracting branch $M > \frac{1}{2}$), yielding the desingularised vector field:}
\begin{align}
    \dfrac{dM}{d\tilde{\tau}} &=  \varepsilon^3 H(M) \label{reduced_scalingB_1D}.
\end{align}
\begin{figure}[ht]
\centering
\includegraphics[width=0.55\linewidth]{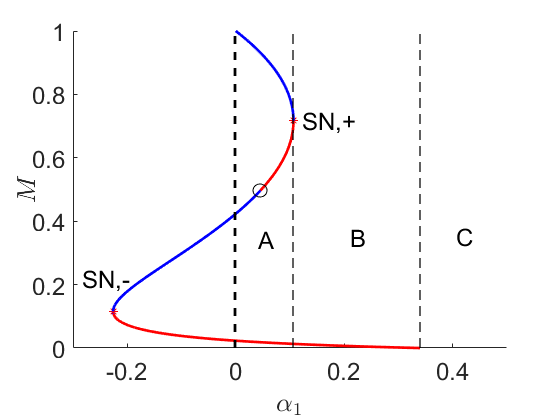}
\caption{Bifurcation diagram of the 1D reduced system \eqref{reduced_scalingB_1D1} via MatCont \cite{matcont}, featuring two saddle-node bifurcations (SN). Blue/red lines denote stable/unstable equilibria on the 1D manifold. The circled point denotes a folded singularity, occurring when $M_{l,-}$ crosses the fold point $M_F$. {(Negative $\alpha_1$ values are shown in order to identify the second saddle-node bifurcation.) Parameter values are as in Table \ref{parameter_values_new} where $\alpha_1$ is the bifurcation parameter.}}
\label{scalingB_1D_bifurcation_E0}
\end{figure}

\section{Singular bifurcations}
\label{sec:bifs11}
The bifurcations of equilibria $M_{l,+}, M_{l,-}, M_s$ within the 1D reduced problem \eqref{reduced_scalingB_1D1} govern the onset of excitable and oscillatory dynamics in the full 3D system \eqref{full_single_epsilon}. In this section, we uncover these singular bifurcations, which partition the dynamics of parameter space. We assume throughout that trajectories crossing the conjugate $C$-axis and the origin are governed by a well-defined global return mechanism; the rigorous proof of this passage via blow-up analysis is deferred to Section \ref{sec:blowup_tumour}.

\subsection{Saddle-node, cusp, and SNIC bifurcations}
\label{sec:boundaryAB}
In Region A ($0 \leq \alpha_1 < \alpha_{1,SN,+}$), the system exhibits excitable dynamics. The equilibria have values $M_s< M_{l,-} < M_{l,+}$ on ${S}_{1,1}$, where the case $M_{l,-}<M_F$ is shown in Figure \ref{fig:excitable}a. A singular trajectory $\Gamma_0$ (Figure \ref{fig:excitable}a) initialised away from the equilibria undergoes a transient large excursion, spending $\mathcal{O}(\varepsilon^2)$ time near both ${S}_{0,E}$ and ${S}_{0,M}$, before asymptotically settling on the stable large-tumour state $M_{l,+}$. Biologically, this represents a temporary reduction in tumour load followed by inevitable relapse, creating an excitable medium where perturbations trigger large excursions.
\begin{figure}[ht]
\centering
\begin{subfigure}{0.45\textwidth}
\centering
  \includegraphics[width=1\linewidth]{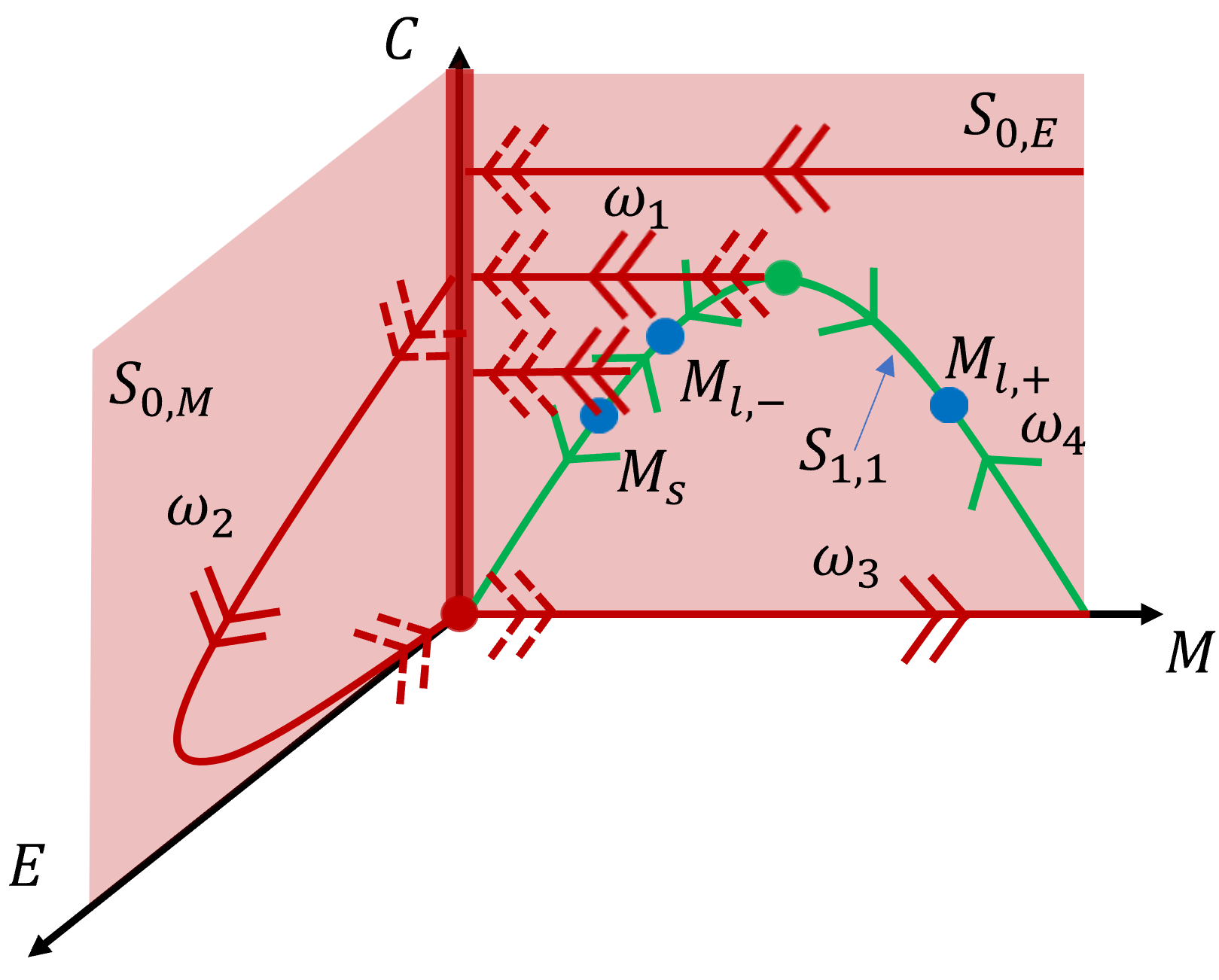}
  \caption{}
\end{subfigure}
\begin{subfigure}{0.45\textwidth}
\centering
  \includegraphics[width=1\linewidth]{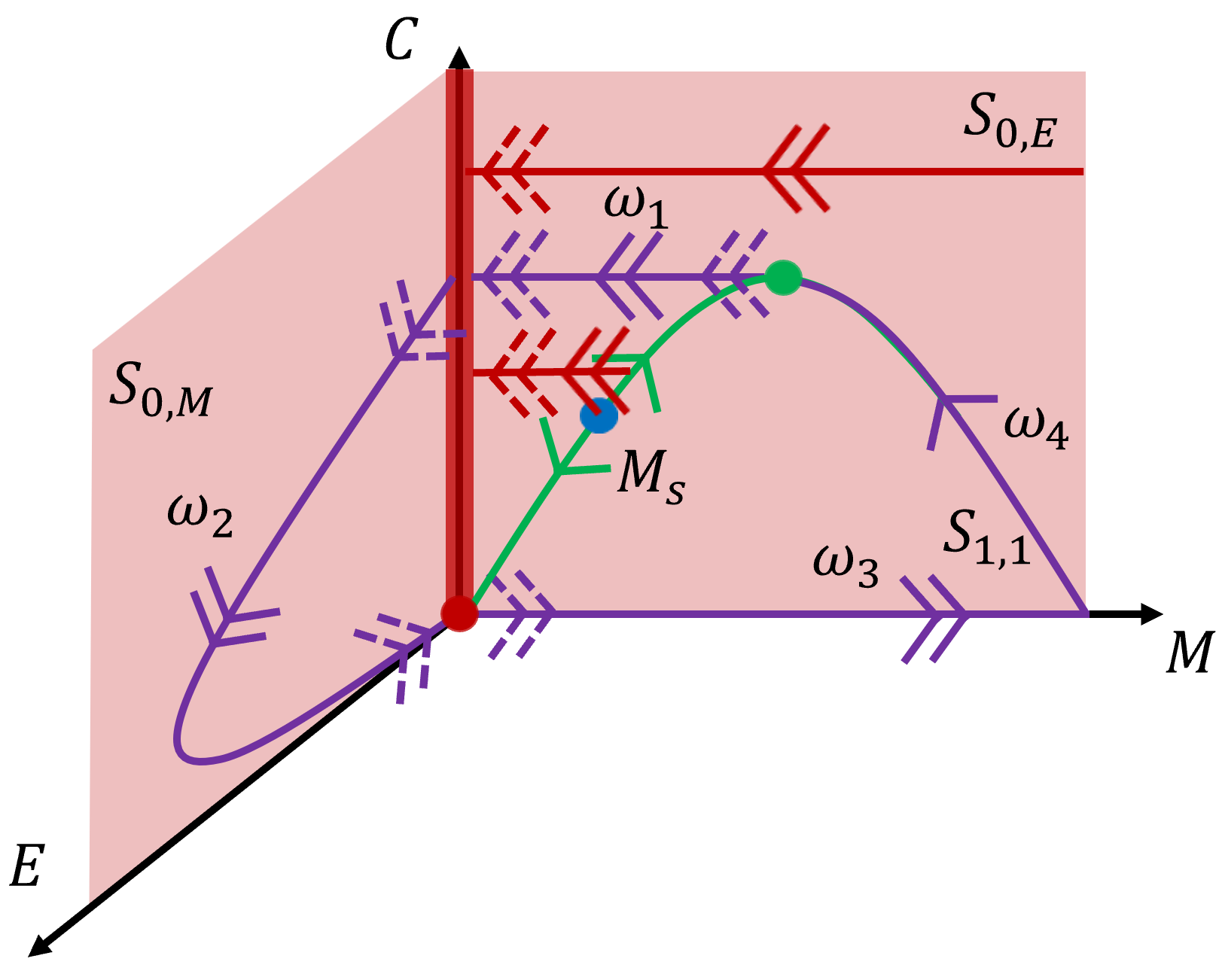}
  \caption{}
\end{subfigure}
\caption{Phase space geometry of system \eqref{full_single_epsilon}. Red planes denote the critical manifolds, with the bolded $C$-axis and origin marking the loss of normal hyperbolicity. Singular solutions are formed by concatenating segments of the reduced and layer problems. (a) A transient singular excursion (excitable dynamics) settling onto the large-tumour equilibrium $M_{l,+}$. {(b) A singular relaxation oscillation (purple) forming a closed loop.}}
\label{fig:excitable}
\end{figure}

As $\alpha_1$ increases, $M_{l,+}$ and $M_{l,-}$ may collide and annihilate via a saddle-node (SN) bifurcation. Algebraically, this occurs when the desingularised vector field \eqref{reduced_scalingB_1D} satisfies the equilibrium condition $H = 0$ and the non-hyperbolic condition $D_M H = 0$. Solving $D_M H = 0$ for $\alpha_1$ gives the constraint:
\begin{align}
\label{eq:alpha1_eq}
\alpha_1(M) = -\frac{\beta_{2} [\alpha_{2} - \beta_{1} - 2M\alpha_{2} + 2M\beta_{1} - 2M\gamma_{1} + \beta_{1}\delta - \delta\gamma_{1} + 3M^2\gamma_{1} + 2M\delta\gamma_{1}]}{\gamma_{2}}.
\end{align}
Extending this analysis to the two-parameter plane $(\alpha_1, \beta_1)$, where $\beta_1$ is proportional to the effector cell death rate, we solve $H=0$ and $D_MH=0$ simultaneously. This parametrises the SN locus by $M$ as:
\begin{align} \label{eq:SN_curve}
\alpha_1(M) &= \frac{\beta_{2}(M-1)^2 (\gamma_{1}M^2+2\gamma_{1}M\delta +\gamma_{1}\delta^2-\alpha_{2}\delta)}{\gamma_{2}(M+\delta)^2}, \nonumber \\
\beta_1(M) &= \alpha_{2}+\gamma_{1}-2M\gamma_{1}-\frac{\alpha_{2}\delta(\delta +1)}{(M+\delta)^2}.
\end{align}

\begin{lemma} \label{prop:sn_cusp}
Given the parameters in Table \ref{parameter_values_new}, the system exhibits two saddle-node bifurcations at $(\alpha_1,M)_{SN,-} \approx (-0.226, 0.116)$ and $(\alpha_1,M)_{SN,+} \approx (0.106, 0.717)$. Furthermore, in the $(\alpha_1,\beta_1)$ parameter plane, the SN curves meet at a cusp bifurcation at $M_{cusp} \approx 0.272$, corresponding to $(\alpha_1,\beta_1) \approx (0.518, 0.745)$.
\end{lemma}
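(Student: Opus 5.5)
The plan is to work directly with the desingularised reduced flow \eqref{reduced_scalingB_1D} and its cubic numerator
\[
H(M)=(M-1)\bigl(\gamma_1M^2+(\beta_1-\alpha_2+\delta\gamma_1)M+\beta_1\delta\bigr)+\frac{\alpha_1\gamma_2}{\beta_2}(M+\delta).
\]
Since $H$ is affine in $\alpha_1$, the condition $H=0$ can be solved explicitly for $\alpha_1$ as a function of $M$:
\[
\alpha_1=-\frac{\beta_2(M-1)(\gamma_1M^2+(\beta_1-\alpha_2+\delta\gamma_1)M+\beta_1\delta)}{\gamma_2(M+\delta)}.
\]
With $\beta_1$ fixed at its value in Table~\ref{parameter_values_new}, saddle-node points are exactly the critical points of this equilibrium branch $M\mapsto\alpha_1(M)$. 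Equivalently, I would substitute \eqref{eq:alpha1_eq} into $H=0$. Clearing the denominator $(M+\delta)$ gives a polynomial equation in $M$ of degree at most four. I would solve it numerically with the values of Table~\ref{parameter_values_new}, keep the roots with $M\in(0,1)$, and evaluate \eqref{eq:alpha1_eq} there. This should give $(\alpha_1,M)\approx(-0.226,0.116)$ and $(0.106,0.717)$.

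To confirm these are genuine fold points, I would check the two nondegeneracy conditions.
\begin{itemize}
\item Second-derivative condition: $D_{MM}H\neq0$, which is equivalent to $\alpha_1''(M)\neq0$ along the branch. This is checked at each root.
\item Transversality in the parameter: $D_{\alpha_1}H=\gamma_2(M+\delta)/\beta_2>0$. This holds automatically for $M>0$.
\end{itemize}
It also follows that both points lie strictly off the fold $M_F=1/2$ of Lemma~\ref{lemma:fold_point}. Hence the factor $(1-2M)$ does not vanish there, and the SN points of \eqref{reduced_scalingB_1D} are SN points of \eqref{reduced_scalingB_1D1} as well.

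For the cusp I would use the parametrisation \eqref{eq:SN_curve}, obtained by solving the linear system $H=0$, $D_MH=0$ for $(\alpha_1,\beta_1)$; both equations are linear in these parameters. A cusp is where the fold curve degenerates, namely $D_{MMM}H\neq0$ and $D_{MM}H=0$ on the SN locus. Geometrically, this is where the parametrised curve $M\mapsto(\alpha_1(M),\beta_1(M))$ has a singular point, $\alpha_1'(M)=\beta_1'(M)=0$.

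From \eqref{eq:SN_curve},
\[
\beta_1'(M)=-2\gamma_1+\frac{2\alpha_2\delta(\delta+1)}{(M+\delta)^3}.
\]
This gives the cusp explicitly as
\[
M_{cusp}=\Bigl(\frac{\alpha_2\delta(1+\delta)}{\gamma_1}\Bigr)^{1/3}-\delta\approx0.272.
\]
I would then confirm that $\alpha_1'(M_{cusp})=0$ as well. This should follow from the general fact that on a fold locus $\alpha_1'$ and $\beta_1'$ vanish together. Substituting $M_{cusp}$ into \eqref{eq:SN_curve} then yields $(\alpha_1,\beta_1)\approx(0.518,0.745)$. Finally I would verify the cusp nondegeneracy conditions: $D_{MMM}H=6\gamma_1\neq0$, and the map $(\alpha_1,\beta_1)\mapsto(H,D_MH)$ has nonsingular Jacobian there.

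The main obstacle is bookkeeping rather than conceptual. The SN conditions at fixed $\beta_1$ lead to a quartic, so a closed form is impractical. The stated values are therefore numerical, and I would justify them by bracketing each root with sign changes, possibly with interval arithmetic, and checking that no other admissible roots exist. A secondary point to check carefully is consistency: the fixed-$\beta_1$ saddle-node points must lie on the two-parameter SN curve \eqref{eq:SN_curve}. I would verify this by confirming $\beta_1(0.116)\approx\beta_1(0.717)\approx0.2060$.
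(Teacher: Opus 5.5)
Your proposal is correct and is essentially the paper's argument: the SN points come from combining $H=0$ with the constraint \eqref{eq:alpha1_eq} (equivalently, as critical points of the equilibrium branch), certified by $D_{MM}H\neq0$ and $D_{\alpha_1}H\neq0$, while the cusp comes from $\alpha_1'(M)=\beta_1'(M)=0$ on \eqref{eq:SN_curve}, certified by $D_{MMM}H=6\gamma_1>0$ and a nonzero Jacobian of $(\alpha_1,\beta_1)\mapsto(H,D_MH)$ (here equal to $\gamma_2(M+\delta)^2/\beta_2$). One minor slip: with $P(M)=\gamma_1M^2+(\beta_1-\alpha_2+\delta\gamma_1)M+\beta_1\delta$, the fixed-$\beta_1$ SN condition is the cubic $(1+\delta)P(M)+(M-1)(M+\delta)P'(M)=0$ rather than a quartic, and its third root is negative ($\approx-0.17$), so ruling out further admissible roots is immediate.
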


\begin{proof}
The locations follow directly from finding the roots of the derivative of \eqref{eq:alpha1_eq} (for the SN), and the simultaneous roots of $\alpha_1'(M) = \beta_1'(M) = 0$ (for the cusp). The conditions $D_{MM} H \neq 0$ and  $D_{\alpha_1} H \neq 0$ (for the SN), and $D_{MMM} H^{(3)} > 0$ with transversal determinant $\det J > 0$ (for the cusp) are calculated at these coordinates, confirming they are non-degenerate.
\end{proof}

Note, if the saddle-node collision at $M_{SN,+}$ occurs on the \emph{attracting} branch of ${S}_{1,1}$ (i.e., $M_{SN,+} > M_F = \frac{1}{2}$), the global return mechanism (Figure \ref{fig:excitable}b) maps the unstable manifold of the saddle $M_{l,-}$ directly back to the stable node $M_{l,+}$. Thus, the SN bifurcation manifests as a \emph{saddle-node on invariant circle (SNIC)} bifurcation.

\begin{lemma} \label{lemma:SNIC}
For $\beta_1 \in (\beta_1^{lower}, \beta_1^{upper})$, the right SN branch $(\alpha_1,M_{SN,+})_{SN,+}$ is a SNIC bifurcation, where:
\begin{align*}
    \beta_1^{upper} &= \frac{\alpha_{2}}{4(\delta + 1/2)^2}, \quad 
    \beta_1^{lower} = \alpha_{2}+\gamma_{1}+\delta\gamma_{1}-2\sqrt{\alpha_{2}\gamma_{1}(\delta +1)}.
\end{align*}
\end{lemma}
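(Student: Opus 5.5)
The plan is to combine the explicit parametrisation \eqref{eq:SN_curve} of the SN locus with the geometric criterion stated just before the lemma. The right SN branch is a SNIC exactly when two things hold. First, the saddle-node point $M_{SN,+}$ must lie on the attracting branch of $S_{1,1}$, i.e.\ $M_{SN,+}>M_F=\tfrac12$. Second, the global return mechanism (assumed here, proved in Section \ref{sec:blowup_tumour}) must send the unstable branch of the saddle $M_{l,-}$ back to the node $M_{l,+}$, so that an invariant circle passes through the SN point. I will therefore translate each condition into an interval of $\beta_1$ and intersect the two.

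\textbf{Step 1 (upper bound).} From \eqref{eq:SN_curve}, $\beta_1'(M)=-2\gamma_1+2\alpha_2\delta(\delta+1)/(M+\delta)^3$. This vanishes only at $M_{cusp}$ (Lemma \ref{prop:sn_cusp}, $M_{cusp}\approx0.272<\tfrac12$), and $\beta_1$ is strictly decreasing for $M>M_{cusp}$. So on the right branch, $M_{SN,+}>\tfrac12$ is equivalent to $\beta_1<\beta_1(\tfrac12)$. Direct substitution gives
\[
\beta_1(\tfrac12)=\alpha_2\Bigl(1-\tfrac{\delta(\delta+1)}{(\delta+1/2)^2}\Bigr)=\tfrac{\alpha_2}{4(\delta+1/2)^2}=\beta_1^{upper}.
\]
At $\beta_1=\beta_1^{upper}$ the SN sits exactly at the fold. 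This is the folded saddle-node degeneracy (circled point in Figure \ref{scalingB_1D_bifurcation_E0}), and it is excluded from the SNIC range.

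\textbf{Step 2 (lower bound).} I expect the lower end to come from losing the invariant circle rather than from the fold position. The square-root form of $\beta_1^{lower}$ suggests it is the vanishing of a discriminant of the form
\[
\bigl(\beta_1-\alpha_2-\gamma_1(1+\delta)\bigr)^2-4\alpha_2\gamma_1(1+\delta)=0,
\]
taking the smaller root. My first attempt is to look for it in $H$ restricted to a distinguished point. Candidates are $H(1)$, or the sign of $H$ on the attracting branch near $M=1$, where $S_{1,1}$ meets $C=0$ and trajectories hand over to the return mechanism. The aim is to show that for $\beta_1>\beta_1^{lower}$ the reduced flow on $M\in(\tfrac12,1]$ at the SN contains no other equilibria, so the unstable manifold of the saddle-node is carried by \eqref{reduced_scalingB_1D1} down to the fold $M_F$. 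From there it follows the fast fibre to $S_{0,M}$, the slow flow \eqref{reduction_M0}, and the return through the origin back onto $S_{1,1}$. Below $\beta_1^{lower}$ an extra equilibrium, for example $M_s$ crossing into the physical region or the $\alpha_1\ge0$ constraint, should break this loop. Equivalently, I would check that $\beta_1^{lower}$ is the value of $\beta_1$ on the SN curve at which the associated $\alpha_1$ or the other root of $H$ reaches the boundary of the admissible region. I would do this by eliminating $M$ between $H=0$ and that boundary condition and minimising with an AM--GM-type step, which produces $2\sqrt{\alpha_2\gamma_1(\delta+1)}$.

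\textbf{Step 3.} Check numerically with Table \ref{parameter_values_new} that $\beta_1^{lower}<\beta_1<\beta_1^{upper}$ holds for $\beta_1=0.206$. Also verify nondegeneracy of the SN along the branch, which is inherited from Lemma \ref{prop:sn_cusp}.

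The main obstacle is Step 2. I need to identify precisely which global feature fails at $\beta_1^{lower}$, and to show that the singular return orbit closes up onto the saddle-node for every $\beta_1$ in the interval, not merely at the given parameter values.
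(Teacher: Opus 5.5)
Your Step 1 is the paper's upper-bound argument: substitute $M=\tfrac12$ into $\beta_1(M)$ from \eqref{eq:SN_curve}. Your monotonicity check of $\beta_1(M)$ for $M>M_{cusp}$ is a useful addition that the paper leaves implicit.

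Step 2, however, is a genuine gap. You never identify what fails at $\beta_1^{lower}$, and the candidates you lead with cannot produce it.
\begin{itemize}
\item $H(1)=\alpha_1\gamma_2(1+\delta)/\beta_2$ does not involve $\beta_1$ at all.
\item At the SN point, the only other root of $H$ is $M_s$, which lies near $M=0$ on the repelling branch. So ``no other equilibria on $(\tfrac12,1]$'' holds automatically and gives no threshold.
\item For $M>M_{cusp}$, the SN branch reaches $\alpha_1=0$ only at $M=1$, where $\beta_1(1)=\alpha_2/(1+\delta)-\gamma_1<0$.
\end{itemize}
Your remark that $M_s$ ``crosses into'' the physical region below $\beta_1^{lower}$ also has the direction reversed: below $\beta_1^{lower}$, $M_s$ has already left through $M=0$.

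The missing idea is that the loop breaks at the origin, not on $S_{1,1}$. The return through chart $K_5$ requires $\tilde{\mathcal{EQ}}_{tf}=(\alpha_1/\beta_1,0)$ to be a saddle, i.e.\ $\alpha_1<\beta_1\beta_2/\gamma_2$. Equivalently, $H(0)=\delta(\alpha_1\gamma_2/\beta_2-\beta_1)<0$, i.e.\ $M_s>0$. At $\alpha_1=\beta_1\beta_2/\gamma_2$ the root $M_s$ exits through $M=0$. The tumour-free state then becomes stable and captures the returning flow, so the invariant circle is destroyed. The paper's lower bound is exactly where the right SN curve meets this line, the blue curve of Figure~\ref{fig:cusp_regime}.

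Concretely, at $\alpha_1=\beta_1\beta_2/\gamma_2$ one has $H(M)=M\,Q(M)$ with
\[
Q(M)=\gamma_1M^2+(\beta_1-\alpha_2-\gamma_1+\delta\gamma_1)M+\alpha_2+\beta_1\delta-\delta\gamma_1 .
\]
The vanishing of the discriminant of $Q$ is precisely the equation you guessed. More directly, along \eqref{eq:SN_curve},
\[
\beta_1(M)-\frac{\gamma_2}{\beta_2}\,\alpha_1(M)=\frac{M^2\bigl(\alpha_2(1+\delta)-\gamma_1(M+\delta)^2\bigr)}{(M+\delta)^2}.
\]
Hence the SN lies strictly to the left of the line if and only if $M<M^*:=\sqrt{\alpha_2(1+\delta)/\gamma_1}-\delta\approx 0.83$. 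One checks that $\beta_1(M^*)=\beta_1^{lower}$.

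Combined with your monotonicity, $M_{SN,+}\in(\tfrac12,M^*)$ if and only if $\beta_1\in(\beta_1^{lower},\beta_1^{upper})$. No AM--GM step is needed. The larger root of your discriminant corresponds to $M+\delta<0$ and is non-physical.
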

\begin{proof} 
The upper bound corresponds to the SN occurring exactly at the fold point $M_F = 1/2$. Substituting $M=1/2$ into \eqref{eq:SN_curve} yields $\beta_1^{upper}$. The lower bound occurs when the small-tumour equilibrium $M_s \to 0$ as $\alpha_1 \to \frac{\beta_1 \beta_2}{\gamma_2}$ from the left, which disrupts the closed oscillatory loop (where the flow hits the non-smooth boundary at the origin). Equating the $\alpha_1 \to \frac{\beta_1 \beta_2}{\gamma_2}$ limit with the SN curve \eqref{eq:SN_curve} yields $\beta_1^{lower}$. Within this interval, the SN resides on the attracting manifold with a global return mechanism, satisfying the geometric conditions for a SNIC.
\end{proof}

\subsection{Singular Andronov-Hopf and singular Bogdanov-Takens bifurcations}
\label{sec:singularAH}
In the planar fast-slow subsystem \eqref{reduced_again2}, the collision of an equilibrium on $S_{1,1}$ with the fold point gives rise to a singular Andronov-Hopf (sAH) bifurcation \cite{krupaszmolyan2001fold}, which permits trajectories to temporarily track the repelling branch of ${S}_{1,1}$ via singular canards.

\begin{corollary} \label{cor:FS1}
Setting $M = 1/2$ in the equilibrium condition $H = 0$ yields the parameter locus for the sAH bifurcation of the layer problem \eqref{reduced_again2} at:
\begin{align} \label{eq:folded_sing_eq_param}
    \alpha_{1,sAH} = \frac{\beta_{2}(2\beta_{1}-2\alpha_{2}+\gamma_{1}+4\beta_{1}\delta +2\delta\gamma_{1})}{4\gamma_{2}(2\delta +1)}
\end{align} given that $\beta_1 > \beta_1^{upper} = \frac{\alpha_{2}}{4(\delta + 1/2)^2}$.
\end{corollary}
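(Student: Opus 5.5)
The corollary has two parts, and I would prove them in turn. First, $M=\tfrac12$ substituted into $H=0$ gives the stated formula for $\alpha_1$. Second, the condition $\beta_1>\beta_1^{upper}$ picks out the parameters at which this folded equilibrium is a genuine sAH, as opposed to the SN/SNIC regime of Lemma~\ref{lemma:SNIC}.

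\emph{Derivation of the locus.} Write $H(M)$ from \eqref{reduced_scalingB_1D1}:
\[
H(M)=(M-1)\bigl(\beta_1 M-\alpha_2 M+\beta_1\delta+\gamma_1M^2+\delta\gamma_1M\bigr)+\frac{\alpha_1\gamma_2}{\beta_2}(M+\delta).
\]
At $M=\tfrac12$ the first factor is $-\tfrac12$ and the bracket equals $\tfrac12\beta_1-\tfrac12\alpha_2+\beta_1\delta+\tfrac14\gamma_1+\tfrac12\delta\gamma_1$. Setting $H(\tfrac12)=0$ and solving the resulting linear equation for $\alpha_1$ gives
\[
\alpha_1\,\frac{\gamma_2}{\beta_2}\Bigl(\delta+\tfrac12\Bigr)=\tfrac12\Bigl(\tfrac12\beta_1-\tfrac12\alpha_2+\beta_1\delta+\tfrac14\gamma_1+\tfrac12\delta\gamma_1\Bigr).
\]
Multiplying through by $8$ then yields \eqref{eq:folded_sing_eq_param}.

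\emph{Identification as an sAH.} At this locus an equilibrium of the planar system \eqref{reduced_again2} lies exactly on the fold $(M,C)_F$ of Lemma~\ref{lemma:fold_point}. Equivalently, the desingularised flow \eqref{reduced_scalingB_1D} has a zero at $M_F$, which is a folded singularity. I would check the Krupa--Szmolyan non-degeneracy conditions for a canard point in this setting:
\begin{itemize}
\item the fold is non-degenerate, which is Lemma~\ref{lemma:fold_point};
\item the folded singularity is transversal in $\alpha_1$, which holds because $D_{\alpha_1}H=\gamma_2(M+\delta)/\beta_2\neq0$;
\item the folded singularity is non-degenerate, that is, $D_MH(\tfrac12)\neq 0$.
\end{itemize}
The third condition is where the constraint on $\beta_1$ enters. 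It excludes the case in which the folded singularity coincides with the saddle-node of $H$, which is a singular Bogdanov--Takens point.

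\emph{Location of the degeneracy.} Setting $M=\tfrac12$ in \eqref{eq:SN_curve} gives $\beta_1=\alpha_2+\gamma_1-\gamma_1-\alpha_2\delta(\delta+1)/(\delta+\tfrac12)^2$. Using $(\delta+\tfrac12)^2-\delta(\delta+1)=\tfrac14$, this simplifies to $\beta_1^{upper}$. The main step is to fix the sign of $D_MH(\tfrac12)$ on the locus as a function of $\beta_1$, and to show that it has the correct sign exactly when $\beta_1>\beta_1^{upper}$.

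To do this, I would substitute $\alpha_{1,sAH}$ into $D_MH(\tfrac12)$. The result should reduce to a positive multiple of $\beta_1-\beta_1^{upper}$, up to sign. This sign means the branch of equilibria crosses the fold in the direction that produces a canard point rather than a folded saddle, that is, the equilibrium $M_{l,-}$ or $M_s$ passes through $M_F$ as in Figure~\ref{scalingB_1D_bifurcation_E0}. For $\beta_1<\beta_1^{upper}$, the SN of Lemma~\ref{lemma:SNIC} occurs at $M>\tfrac12$ and the crossing is of the other type.

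I expect this sign bookkeeping to be the main obstacle. The time desingularisation reverses orientation on the attracting branch, so it must be tracked carefully to decide which side corresponds to a true sAH. Once the sign is fixed, the standard result of \cite{krupaszmolyan2001fold} gives the sAH.
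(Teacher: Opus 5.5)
Your derivation of \eqref{eq:folded_sing_eq_param} is correct and matches the paper's argument. $H(\tfrac12)=0$ is linear in $\alpha_1$, and the paper adds nothing beyond this substitution: it defers the Krupa--Szmolyan normal form to the supplement and reads off the restriction $\beta_1>\beta_1^{upper}$ from the geometry of Lemma~\ref{lemma:SNIC}.

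The gap is in your justification of that restriction. You leave the decisive sign undetermined, and you then attach the wrong geometry to it.

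The computation is short. Write $H=(M-1)P(M)+k(M+\delta)$ with $P(M)=\gamma_1M^2+(\beta_1-\alpha_2+\delta\gamma_1)M+\beta_1\delta$ and $k=\alpha_1\gamma_2/\beta_2$. On the locus, $k(\tfrac12+\delta)=\tfrac12P(\tfrac12)$. Substituting this into $D_MH(\tfrac12)=P(\tfrac12)-\tfrac12P'(\tfrac12)+k$ gives exactly $D_MH(\tfrac12)=(\delta+\tfrac12)(\beta_1-\beta_1^{upper})$.

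The required sign is \emph{positive}, and no bookkeeping with the desingularised time is needed. In the original time the reduced flow near the fold is $dM/dt\approx-\tfrac12\varepsilon^3D_MH(\tfrac12)$. So $D_MH(\tfrac12)>0$ means the slow flow passes from the attracting branch $M>\tfrac12$ onto the repelling branch, which is the canard-point configuration. Equivalently, the linearisation of \eqref{reduced_again2} at the equilibrium on the fold has trace $\mathcal{O}(\varepsilon^3)$ and determinant $\varepsilon^5\beta_2(\delta+\tfrac12)D_MH(\tfrac12)+\mathcal{O}(\varepsilon^6)$. Hence the equilibrium is of Hopf type if and only if $\beta_1>\beta_1^{upper}$, and is a saddle otherwise. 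In this planar problem the alternative is a saddle, not a folded saddle.

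Your identification of the crossing equilibrium is backwards. $H$ is a cubic with positive leading coefficient $\gamma_1$, so whenever three roots exist, $D_MH<0$ at the middle root $M_{l,-}$. Thus $M_{l,-}$ sitting on the fold is precisely the excluded, non-Hopf case.

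At the sAH the equilibrium on the fold is $M_{l,+}$. Because $D_{\alpha_1}H>0$, it moves down through $M_F$ as $\alpha_1$ increases. Only afterwards does it meet $M_{l,-}$ at $M_{SN,+}<\tfrac12$, which is the A$'$/B$'$ transition described in Section~\ref{sec:singular_bif1}.

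The circled point in Figure~\ref{scalingB_1D_bifurcation_E0} is computed at $\beta_1=0.206<\beta_1^{upper}\approx0.533$. It shows the saddle $M_{l,-}$ sliding through the fold, not an sAH. Pointing to it therefore contradicts your own correct remark that for $\beta_1<\beta_1^{upper}$ the crossing is of the other type.

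With the sign fixed as above, the remaining items in your outline go through: the non-degenerate fold from Lemma~\ref{lemma:fold_point}, transversality $D_{\alpha_1}H\neq0$, and then \cite{krupaszmolyan2001fold}.
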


By transforming \eqref{reduced_again2} into the canonical normal form of Krupa and Szmolyan \cite{krupaszmolyan2001fold,canardexplosion}, we deduce the existence of maximal canards and a canard explosion for $0 < \varepsilon \ll 1$. The first Lyapunov coefficient $A$ of this sAH is strictly positive, implying a subcritical sAH bifurcation, though higher-order $\varepsilon$-corrections may alter its criticality \cite{singularAH_correction}. {Further details can be found in Section SM1 of Supplementary Material II.}

When the saddle-node collision $M_{SN,+}$ coincides exactly with the fold point, we encounter a codimension-two \emph{singular Bogdanov-Takens (sBT)} bifurcation. 

\begin{lemma} \label{prop:sBT}
The point $(\alpha_1, \beta_1)_{sBT} = \left( \frac{\beta_{2}(\gamma_{1}/4-\alpha_{2}\delta +\delta\gamma_{1}+\delta^2\gamma_{1})}{4\gamma_{2}(\delta + 1/2)^2}, \frac{\alpha_{2}}{4(\delta + 1/2)^2}\right)$ constitutes a singular Bogdanov-Takens point on the right SN curve \eqref{eq:SN_curve}.
\end{lemma}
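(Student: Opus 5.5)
We will show that the stated point is where the saddle-node curve \eqref{eq:SN_curve} meets the sAH locus \eqref{eq:folded_sing_eq_param}, and then verify the non-degeneracy conditions that make this intersection a singular Bogdanov-Takens point.

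\textbf{Step 1: locating the point.} We substitute $M=1/2$ into the parametrisation \eqref{eq:SN_curve}. The $\beta_1$ component gives
\[
\beta_1(1/2)=\alpha_2+\gamma_1-\gamma_1-\frac{\alpha_2\delta(\delta+1)}{(\delta+1/2)^2}=\frac{\alpha_2\big((\delta+1/2)^2-\delta^2-\delta\big)}{(\delta+1/2)^2}=\frac{\alpha_2}{4(\delta+1/2)^2},
\]
which is $\beta_1^{upper}$ from Lemma \ref{lemma:SNIC}. The $\alpha_1$ component gives
\[
\alpha_1(1/2)=\frac{\beta_2\,\tfrac14\big(\gamma_1(\delta+1/2)^2-\alpha_2\delta\big)}{\gamma_2(\delta+1/2)^2}.
\]
Expanding $\gamma_1(\delta+1/2)^2=\gamma_1/4+\delta\gamma_1+\delta^2\gamma_1$ reproduces the claimed $\alpha_1$ coordinate.

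As a consistency check, we insert $\beta_1=\beta_1^{upper}$ into \eqref{eq:folded_sing_eq_param}. Since Corollary \ref{cor:FS1} was obtained by imposing $H(1/2)=0$, this should give the same $\alpha_1$, confirming that the point lies on the sAH locus, which terminates exactly at $\beta_1=\beta_1^{upper}$. This also shows that $H(1/2)=0$ and $D_MH(1/2)=0$ hold simultaneously there.

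\textbf{Step 2: interpretation as a BT point.} In the planar fast-slow system \eqref{reduced_again2}, we pass to the desingularised reduced flow on $S_{1,1}$ near the fold. There the condition $H(M_F)=0$ makes the fold a folded singularity. The additional condition $D_MH(M_F)=0$ means the folded singularity has degenerate linear part in the slow direction, which is the singular analogue of a BT point.

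Concretely, we would transform to the Krupa-Szmolyan canonical form near $(M,C)_F$, using the fold non-degeneracy from Lemma \ref{lemma:fold_point}. With $x=M-1/2$ and $y$ a shifted $C$, the slow equation takes the form $y'=\varepsilon(\lambda_1 x+\lambda_2 y+\dots)$, up to the appropriate time rescaling. In this form the sAH corresponds to $\lambda_1=0$ with the slow coefficient $a=\partial_x(\text{slow})\neq0$ as the unfolding parameter. The sBT is the codimension-two point where, additionally, the reduced-flow eigenvalue $D_MH$ vanishes: the folded singularity and the saddle-node of $H$ coincide.

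\textbf{Step 3: non-degeneracy.} This is the main obstacle. We must check three conditions at $M=1/2$ and the stated parameters:
\begin{itemize}
\item the fold non-degeneracy, which holds by Lemma \ref{lemma:fold_point};
\item $D_{MM}H\neq0$, the quadratic coefficient of the reduced flow;
\item transversality of the map $(\alpha_1,\beta_1)\mapsto(H,D_MH)$, i.e. a nonzero Jacobian determinant.
\end{itemize}

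We would compute $D_{MM}H=2\gamma_1 M+\dots$ explicitly. Writing $H=(M-1)P(M)+\alpha_1\gamma_2(M+\delta)/\beta_2$ with $P$ quadratic, one gets $H''=2P'+(M-1)P''$, where $P'=\beta_1-\alpha_2+\beta_1\ldots$. At $M=1/2$ we evaluate this with the Table \ref{parameter_values_new} values and check its sign numerically.

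The Jacobian is
\[
\det\begin{pmatrix}\partial_{\alpha_1}H&\partial_{\beta_1}H\\ \partial_{\alpha_1}H_M&\partial_{\beta_1}H_M\end{pmatrix},
\]
with $\partial_{\alpha_1}H=\gamma_2(M+\delta)/\beta_2$, $\partial_{\alpha_1}H_M=\gamma_2/\beta_2$, $\partial_{\beta_1}H=(M-1)(M+\delta)$ and $\partial_{\beta_1}H_M=2M-1+\delta$. At $M=1/2$ this reduces to $(\gamma_2/\beta_2)\,\tfrac12(\delta+1/2)\neq0$, so the transversality condition holds.

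The remaining subtlety is identifying the correct non-degeneracy conditions for a \emph{singular} BT, as opposed to a regular one. For this we would rely on the canonical-form conditions alongside the regular BT conditions for the reduced flow.
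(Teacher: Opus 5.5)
Your proposal is correct and follows essentially the same route as the paper. Setting $M=1/2$ in the SN parametrisation \eqref{eq:SN_curve} recovers $\beta_1^{upper}$ and the stated $\alpha_1$, and then one checks a quadratic non-degeneracy condition (your $D_{MM}H\neq 0$ plays the role of the paper's $\partial^2_{MM}\mathcal{G}>0$) together with transversality of $(H,D_MH)$ with respect to $(\alpha_1,\beta_1)$.

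There is one small slip. The determinant is $\frac{\gamma_2}{\beta_2}(M+\delta)^2$, i.e.\ $\frac{\gamma_2}{\beta_2}(\delta+\tfrac12)^2$ at $M=1/2$, not $\frac{\gamma_2}{\beta_2}\tfrac12(\delta+\tfrac12)$, though it is still nonzero. The condition you left to a numerical check is also explicit: $D_{MM}H(1/2)=\gamma_1(1+2\delta)+2\beta_1-2\alpha_2\approx 0.75>0$ at $\beta_1=\beta_1^{upper}$.
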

\begin{proof}
This point corresponds to $M_{SN,+} = M_F = 1/2$, which is exactly the upper bound $\beta_1^{upper}$, which we now relabel as $\beta_{1,sBT}$ established in Lemma \ref{lemma:SNIC}. Routine calculation confirms the non-degeneracy condition $\partial^2_{MM} \mathcal{G} > 0$ (where $\mathcal{G}$ is the $C$-equation of system \eqref{reduced_again2}), and the transversality of the Jacobian determinant with respect to $(\alpha_1, \beta_1)$ is non-zero, satisfying the generic sBT conditions detailed in \cite{neural}. 
\end{proof}

\subsection{Singular bifurcation diagram and canard explosions}
\label{sec:singular_bif1}
The interplay of the SN, cusp, and sBT bifurcations partitions the parameter space into distinct dynamical regions (Figure \ref{fig:cusp_regime}). Notably, for $\beta_1 > \beta_{1,sBT}$, the right SN curve is no longer a SNIC because the collision occurs on the repelling branch ($M_{SN,+} < M_F$). Instead, the transition between excitable (Region A') and oscillatory (Region B') dynamics is governed by the sAH bifurcation.
\begin{figure}[ht]
\centering
\includegraphics[width=0.55\linewidth]{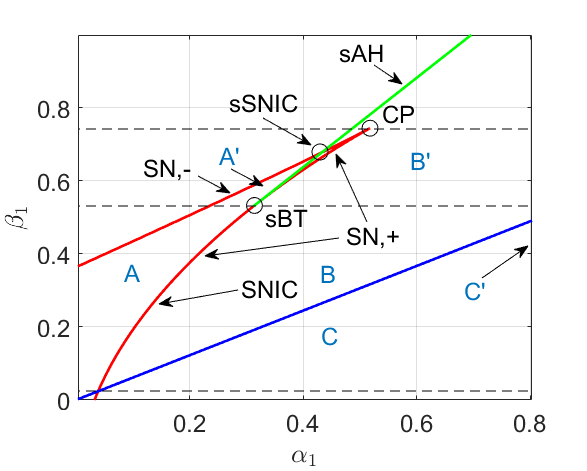}
\caption{Partial singular bifurcation diagram of equilibria on ${S}_{1,1}$ in $(\alpha_1, \beta_1)$ space. The green curve corresponds to the sAH/homoclinic bifurcation emerging from the sBT point. The red curve is the SN locus, a subset of which is a SNIC. The blue curve given by $\alpha_1 = \frac{\beta_1\beta_2}{\gamma_2}$ denotes a global bifurcation determined via the blow-up analysis in Section \ref{sec:blowup_tumour}.}
\label{fig:cusp_regime}
\end{figure}
For $0 < \varepsilon \ll 1$, the sAH bifurcation unfolds into a \emph{canard explosion}. Within an exponentially small parameter window of $\alpha_1$, the small-amplitude AH limit cycles rapidly grow into singular canards before expanding into full relaxation oscillations (Figure \ref{fig:canard_full}). Due to the subcritical nature of the sAH, this explosion involves a saddle-node of periodic orbits (SNPO). 

\begin{figure}[ht]
\centering
\includegraphics[width=0.475\linewidth]{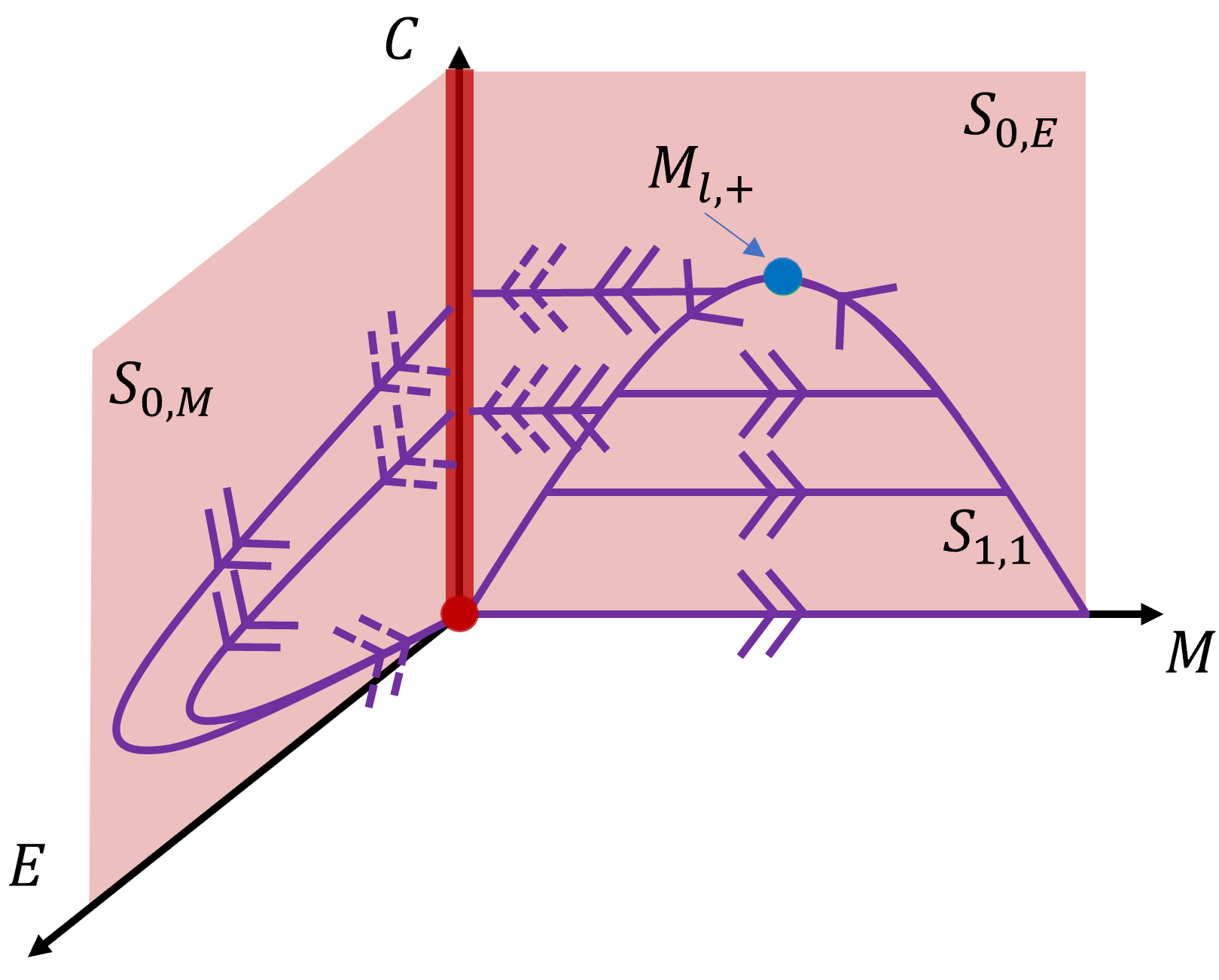}
\caption{Family of singular canard cycles growing to full relaxation oscillations, forming a closed orbit around $\mathcal{EQ}_{l,+} = (E_{l,+},M_{l,+},C_{l,+})$.}
\label{fig:canard_full}
\end{figure}

Depending on the proximity to the saddle $M_{l,-}$, this canard explosion may be \emph{incomplete}. The expanding canard cycles can collide with the stable manifold of $M_{l,-}$, forming a homoclinic orbit (Figure \ref{fig:canard_incomplete}) that instantly destroys the periodic orbit before a full relaxation oscillation can form. Thus, within the subset of Region A', which we call Region A'.I, canard cycles exist, whereas in Region A'.II, oscillatory dynamics are prohibited due to $M_{l,-}$ blocking the necessary closed loop. 

\begin{figure}[ht]
\centering
\begin{subfigure}{0.475\textwidth}
\centering
\includegraphics[width=1\linewidth]{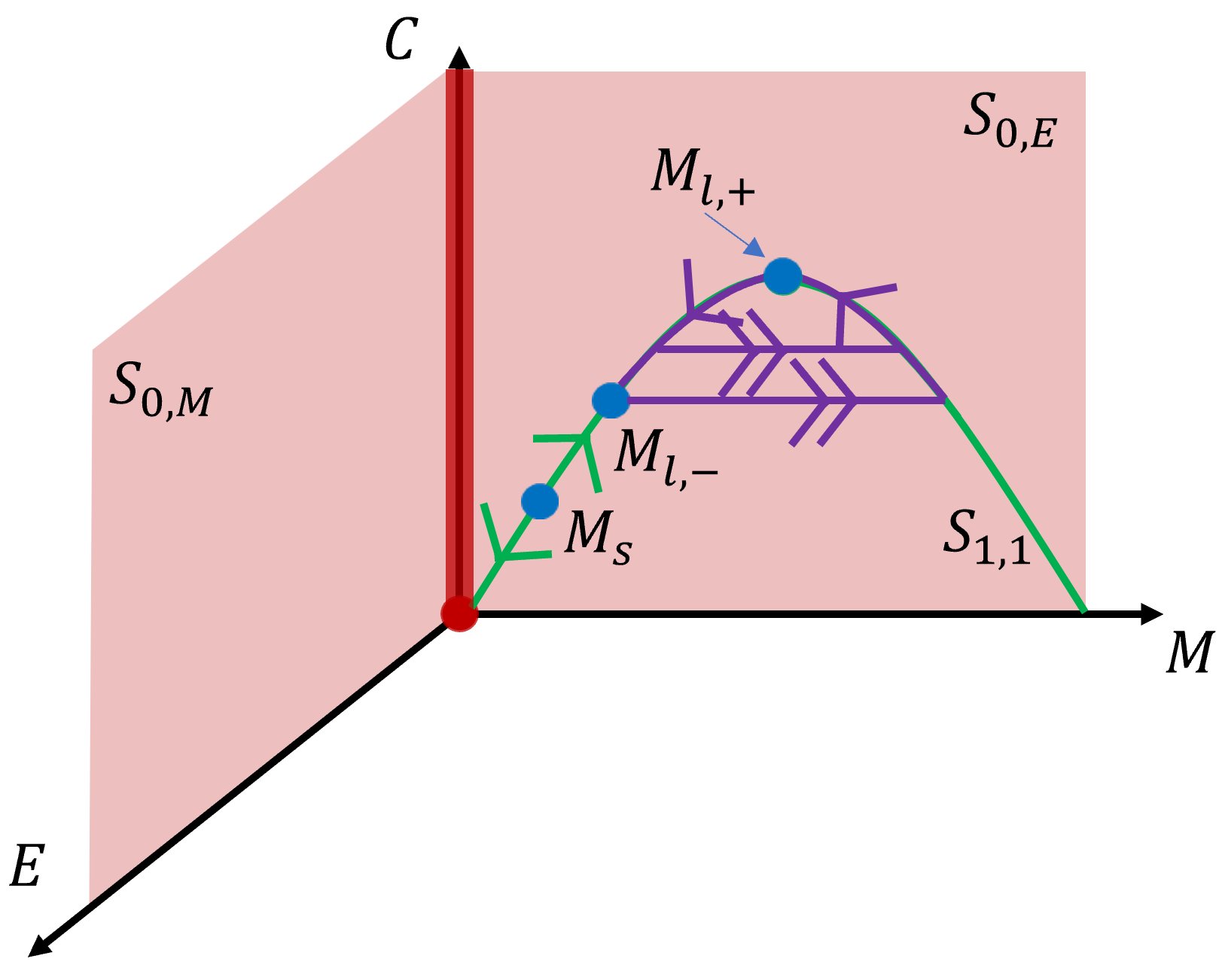}
  \caption{}
\end{subfigure}
\begin{subfigure}{0.475\textwidth}
\centering
\includegraphics[width=1\linewidth]{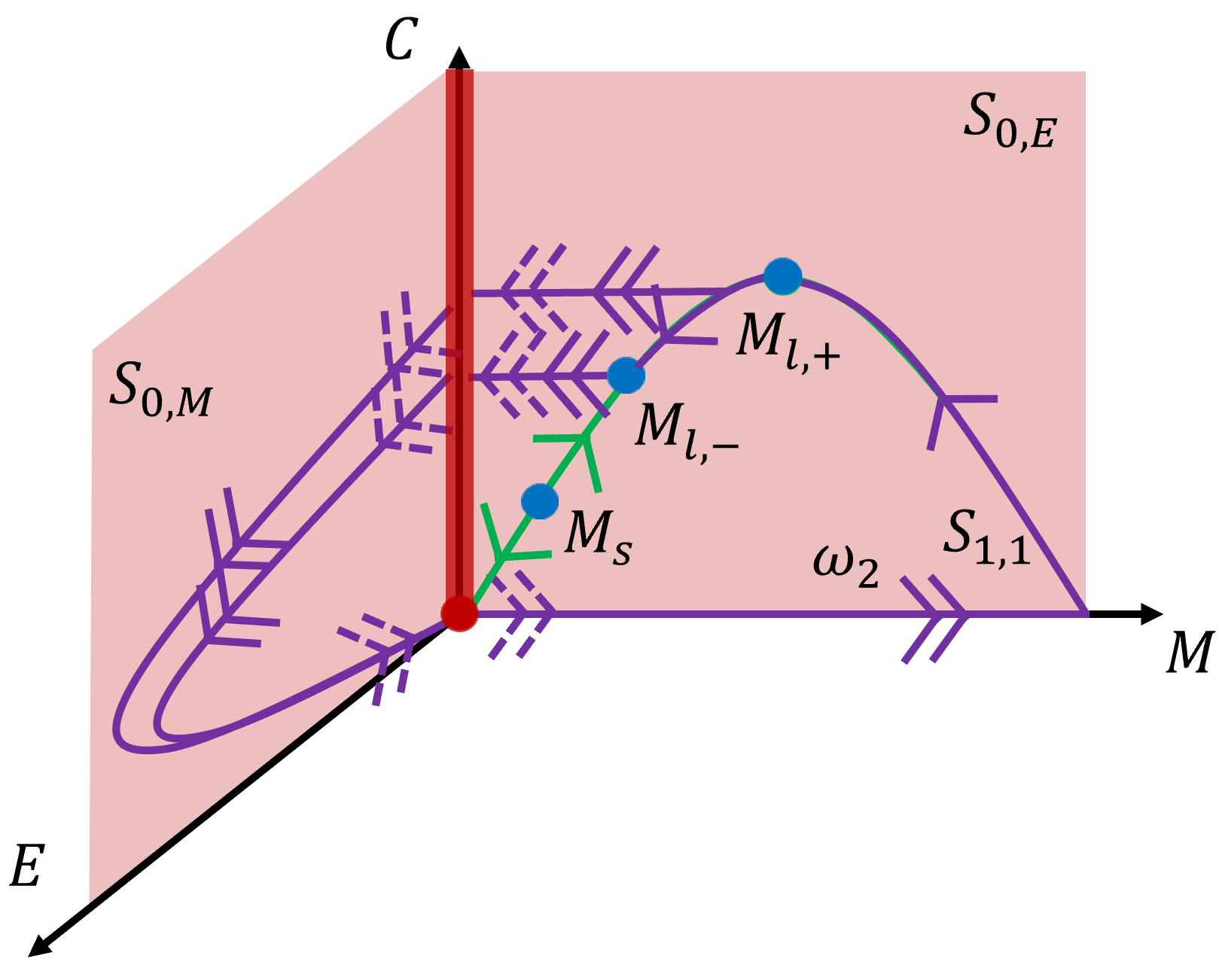}
  \caption{}
\end{subfigure}
\caption{Examples of singular canard cycles disrupted by a homoclinic loop. (a) Canards without head cycles terminating at a small homoclinic orbit of $M_{l,-}$. (b) Canards with head cycles terminating at a large homoclinic orbit of $M_{l,-}$.}
\label{fig:canard_incomplete}
\end{figure}
To complete this global geometric picture and formalise the return mechanisms assumed throughout this section, we must rigorously resolve the flow at the degenerate intersection on the conjugate $C$-axis and at the origin.

\section{Blow-up analysis} \label{sec:blowup_tumour}
To rigorously establish the global return mechanism, specifically the passage of trajectories through the non-hyperbolic regions at the $C$-axis ($C>0$) and the origin, we employ the geometric blow-up technique for singularly perturbed systems \cite{dumortierroussarie1996}. By desingularising these intersections, we prove the existence of stable relaxation oscillations in Regions B and B', and excitable large excursions in Regions A and A'. We focus primarily on proving the existence of unique relaxation oscillations in Region B, followed by a brief discussion on extending these arguments to the other parameter regimes.

\subsection{Augmented system and main theorem}
We begin by appending the trivial dynamics of the perturbation parameter $\varepsilon$ to system \eqref{full_single_epsilon}:
\begin{align}
\begin{pmatrix}
E' \\
M' \\
C' \\
\varepsilon'
\end{pmatrix} &= \begin{pmatrix}
    -1 \\ -1 \\ 1 \\ 0
\end{pmatrix} EM(M + \delta ) + \varepsilon \begin{pmatrix} 1 \\ 1 \\ -1 \\ 0
\end{pmatrix} (M+\delta) C + \mathcal{O}(\varepsilon^2).
\label{full_single_epsilon2}
\end{align}
where prime denotes the derivative with respect to time $t$. 
Since the flow traverses two distinct degenerate regions, we employ two separate blow-up transformations on \eqref{full_single_epsilon2}:
\begin{enumerate}
    \item A \emph{cylindrical blow-up} $\Phi : \mathbb{R}^2 \times S^{2} \to \mathbb{R}^{4}$ to resolve the passage from ${S}_{0,E}$ to ${S}_{0,M}$ across the $C$-axis ($C > 0$).
    \item A \emph{spherical blow-up} $\bar{\Phi} : \mathbb{R} \times S^{3} \to \mathbb{R}^{4}$ to resolve the passage from ${S}_{0,M}$ back to ${S}_{0,E}$ across the origin.
\end{enumerate}
Applying these transformations to the singular relaxation oscillation $\Gamma_0$ (from Figure \ref{fig:excitable}b) gives a desingularised loop $\Gamma_0 = \omega_1 \cup \omega_{12} \cup \omega_2 \cup \omega_{23} \cup \omega_3 \cup \omega_4$, where the non-hyperbolic jumps are replaced by hyperbolic segments $\omega_{12}$ and $\omega_{23}$ across the blown-up manifolds. This geometry is illustrated in Figure \ref{blow_up_pic_sing}.

\begin{figure}[ht]
\centering
 \includegraphics[width=0.8\linewidth]{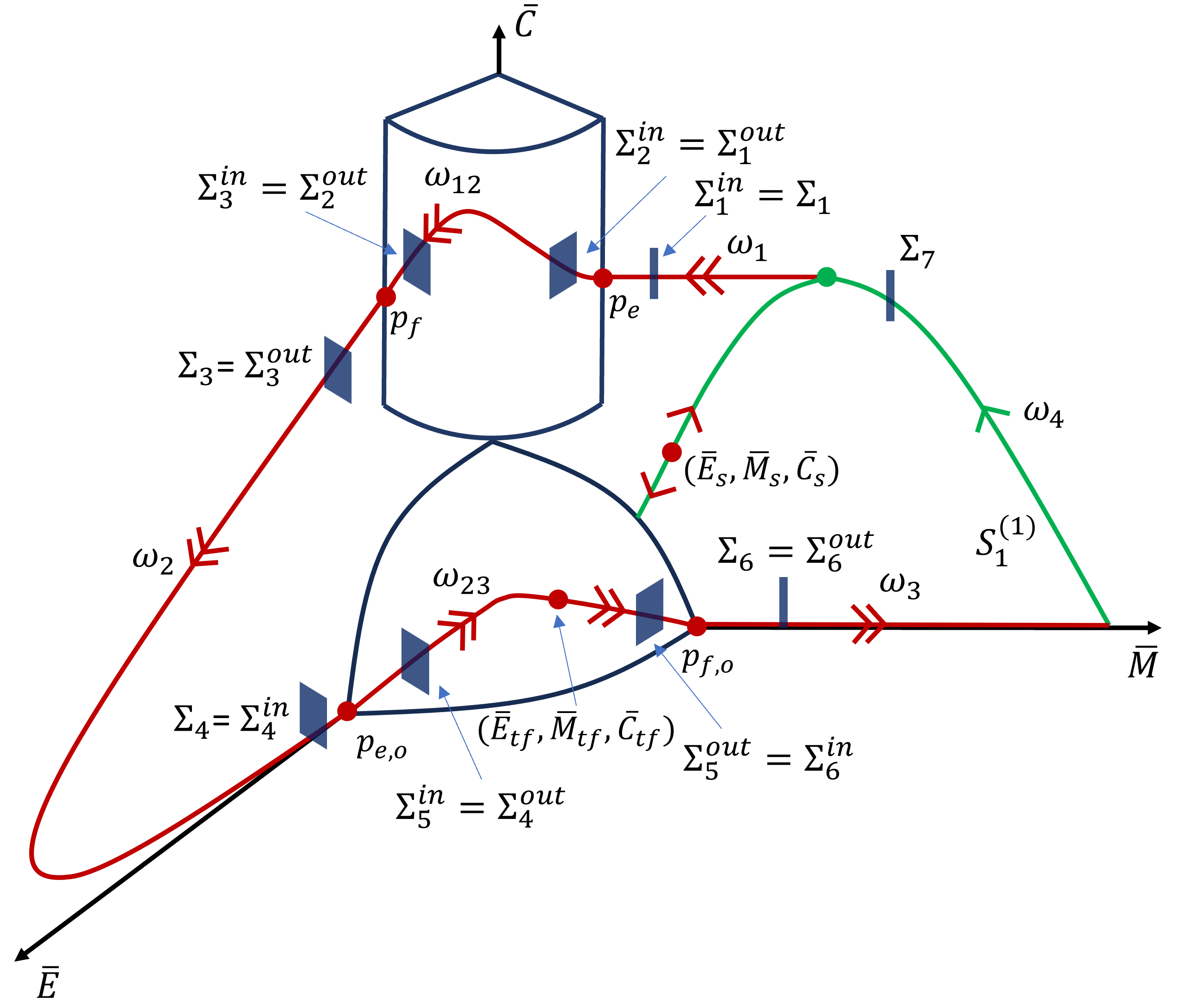}
  \caption{Schematic diagram of the singular relaxation oscillation mapped into the blown-up space in the singular limit $\bar{\varepsilon} = 0$. The non-hyperbolic segments $\omega_{12}$ and $\omega_{23}$ are resolved inside the cylinder (for the $C$-axis) and the sphere (for the origin), respectively. The entry and exit points for the spherical blow-up are denoted with $o$.}
\label{blow_up_pic_sing}
\end{figure}

\begin{theorem} \label{main_theorem}
Let the parameters be fixed to the values in Table \ref{parameter_values_new} (except $\alpha_1, \beta_1$). Fix $\beta_1 \in ( \beta_1^{lower}, \beta_{1,sBT})$ and choose $\alpha_1 \in (\alpha_{1,SN,+}(\varepsilon,\beta_1), \phi(\varepsilon,\beta_1))$, where $\phi = \frac{\beta_1\beta_2}{\gamma_2} - \mathcal{O}(\varepsilon)$, placing the system inside Region B. For sufficiently small $\varepsilon > 0$, there exists a unique, exponentially stable relaxation oscillation $\Gamma_\varepsilon$ for system \eqref{full_single_epsilon}.
\end{theorem}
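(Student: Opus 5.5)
The approach is the standard Poincaré-map argument for relaxation oscillations via blow-up (as in Krupa--Szmolyan and Kosiuk--Szmolyan). We construct a section $\Sigma$ transverse to the singular cycle $\Gamma_0=\omega_1\cup\omega_{12}\cup\omega_2\cup\omega_{23}\cup\omega_3\cup\omega_4$, and show that the return map $\Pi_\varepsilon:\Sigma\to\Sigma$ is well defined and a contraction for $0<\varepsilon\ll1$. Since $\beta_1<\beta_{1,sBT}$ and $\alpha_1>\alpha_{1,SN,+}$, Lemma \ref{lemma:SNIC} and Lemma \ref{prop:sBT} guarantee the following. There are no equilibria $M_{l,\pm}$ on the attracting branch of ${S}_{1,1}$. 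The remaining equilibrium $M_s$ lies on the repelling branch, away from the fold of Lemma \ref{lemma:fold_point}. Moreover $\alpha_1<\phi$ keeps $M_s>0$, so the trajectory leaving the fold is not captured by $M_s$ and is carried toward the $C$-axis.

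We decompose $\Pi_\varepsilon$ into local and transition maps in the order of the singular cycle.

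\textbf{(i) Slow drift $\omega_1$.} This is the $\mathcal{O}(\varepsilon^3)$ drift along the attracting branch of ${S}_{1,1}\subset {S}_{0,E}^\varepsilon$, governed by \eqref{reduced_scalingB_1D1}, down to the fold $M_F=1/2$. Fenichel theory applied twice (to ${S}_{0,E}^\varepsilon$ and to the nested ${S}_{1,1}$) gives exponential contraction of order $e^{-c/\varepsilon^2}$.

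\textbf{(ii) Passage past the generic fold.} The passage past the fold of \eqref{reduced_again2} is handled by the Krupa--Szmolyan fold theorem. It yields an $\mathcal{O}(\varepsilon^{2/3})$-type deviation, with the relevant small parameter being $\varepsilon$ in the $\varepsilon^2$-time, and the secondary layer fibre then brings the trajectory to small $M$ along ${S}_{0,E}$ (segment $\omega_{12}$ begins).

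\textbf{(iii) Cylindrical blow-up $\Phi$ along the $C$-axis.} In an entry chart ($M=r$, $E=r e_1$, $\varepsilon=r\varepsilon_1$, say) and an exit chart ($E=r$, $M=rm_3$), one identifies hyperbolic equilibria on the cylinder and centre manifolds that extend ${S}_{0,E}^\varepsilon$ and ${S}_{0,M}^\varepsilon$. In the rescaling chart ($\varepsilon=1$) the dynamics are a linear-in-$E$, $M$ problem with $C$ as a parameter, which is shown to connect the two centre manifolds transversally. A key point, highlighted in the introduction, is that the local centre manifolds in the entry and exit charts must be computed to higher order using the parametrisation method. This ensures that the reduced flow on them matches \eqref{reduced_again2} and \eqref{reduction_M0}; in particular, $C$ drifts only by $o(1)$ during the passage. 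Composing the charts gives a transition map from near ${S}_{0,E}$ to near ${S}_{0,M}$ that is $C^1$ with bounded derivative along the $C$-direction and contracts in the normal direction.

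\textbf{(iv) Slow segment $\omega_2$ and spherical blow-up $\bar\Phi$.} The segment $\omega_2$ follows the slow flow \eqref{reduction_M0} on ${S}_{0,M}^\varepsilon$, which funnels the trajectory into the weak stable subspace $W^s_{sub}$ and toward the origin. The spherical blow-up $\bar\Phi$ at the origin is then performed with weights adapted to the $\varepsilon^2$/$\varepsilon^3$ scalings, for instance $E=\bar r^{2}\bar e$, $M=\bar r^{2}\bar m$, $C=\bar r^{3}\bar c$, $\varepsilon=\bar r\bar\varepsilon$ (to be tuned so that the $\alpha_1$, $\beta_1$, $\gamma_2$ terms balance). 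One shows the following. In the entry chart the trajectory arrives along a centre manifold extending ${S}_{0,M}^\varepsilon$, again matched via parametrisation-method corrections. In the rescaling chart, the condition $\alpha_1<\beta_1\beta_2/\gamma_2$ produces instability of the tumour-free state $\mathcal{EQ}_{tf}$ in the $M$-direction, so $M$ grows: the linearisation gives $M'\propto(\alpha_1\gamma_2/\beta_2-\beta_1)$-type sign reversal, which is where $\phi$ and its $\mathcal{O}(\varepsilon)$ correction arise. In the exit chart, orbits leave along a hyperbolic/centre manifold that connects to ${S}_{0,E}$ and then to the repelling-branch-free region of ${S}_{1,1}$ (segments $\omega_{23}$, $\omega_3$, $\omega_4$), landing back on the attracting branch of ${S}_{1,1}$.

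\textbf{(v) Conclusion.} Each transition map is well defined and at worst algebraically expanding (from the centre-manifold passages), while the slow segments on attracting Fenichel manifolds contract exponentially. Hence $\Pi_\varepsilon$ is a contraction on a one-dimensional section in the $M$-direction, after the $E$ and $C$ directions are already exponentially squeezed. The Banach fixed point theorem then gives a unique fixed point, which corresponds to $\Gamma_\varepsilon$, exponentially stable and $\mathcal{O}(\varepsilon^\kappa)$-close to $\Gamma_0$.

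The main obstacle is step (iv), the spherical blow-up at the doubly degenerate origin. There the choice of weights must simultaneously capture the $\varepsilon^2$ slow flow on ${S}_{0,M}$ and the $\varepsilon^3$ terms carrying $\alpha_1$. Moreover, passage near the weak stable subspace $W^s_{sub}$ can involve a delayed-loss-of-stability/entry–exit phenomenon, so we must check that the exit point is bounded away from the nilpotent pseudo-singularity, uniformly for $\alpha_1<\phi$. I would first compute the rescaling-chart dynamics and find an explicit (or Riccati-type) connecting orbit, then verify that it is transverse, which is what makes the map from entry to exit chart a regular transition.
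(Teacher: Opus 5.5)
\textbf{There is a gap in the local blow-up analysis.} Your global architecture matches the paper's. You build a return map from Fenichel segments on ${S}_{0,E}^\varepsilon$, ${S}_{1,1}^\varepsilon$ and ${S}_{0,M}^\varepsilon$, pass the fold, blow up the $C$-axis and the origin independently, and close with the contraction mapping theorem. You also correctly identify the instability of $\mathcal{EQ}_{tf}$ for $\alpha_1<\beta_1\beta_2/\gamma_2$ as the exit mechanism at the origin. However, both scalings you propose fail, and so you plan to look for the wrong object in the rescaling charts.

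At the $C$-axis, the balance $EM\sim\varepsilon C$ with $C=\mathcal{O}(1)$ forces $E,M\sim\sqrt{\varepsilon}$, i.e.\ $\varepsilon=r^2\bar\varepsilon$. With your choice $\varepsilon=r\varepsilon_1$, the leading-order entry-chart field is $r'=-C'=r(r+\delta)(\varepsilon_1C-re_1)$, $e_1'=(r+\delta)(1-e_1)(\varepsilon_1C-re_1)$, $\varepsilon_1'=-\varepsilon_1(r+\delta)(\varepsilon_1C-re_1)$. Its linearisation is nilpotent at every point of $\{r=\varepsilon_1=0\}$, so nothing is desingularised. The rescaling chart collapses to the translation $E_2'=M_2'=\delta C$ (your ``linear problem''). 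This conserves $E_2-M_2$, so it cannot carry orbits from $\{E_2\ll M_2\}$ to $\{M_2\ll E_2\}$. With the correct weights, $K_2$ is itself a singularly perturbed problem in $r_2=\sqrt{\varepsilon}$, with attracting critical manifold $\hat{S}_0=\{C_2=E_2M_2\}$. The crossing is carried by the $\mathcal{O}(r_2^2)$ slow flow \eqref{slow_flow_D} along the hyperbolas $E_2M_2=\mathrm{const}$. That is Fenichel theory inside the chart, not a transverse connecting orbit.

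The same problem occurs at the origin. Take your weights $E=\bar r^2\bar e$, $M=\bar r^2\bar m$, $C=\bar r^3\bar c$, $\varepsilon=\bar r\bar\varepsilon$. In the rescaling chart the $E$-equation is dominated by the constant term $\varepsilon^3\alpha_1\delta$. After rescaling time, $\bar e'=\alpha_1\delta$ to leading order, so there are no equilibria. Moreover $\mathcal{EQ}_{tf}$, whose $E$-coordinate is $\varepsilon\alpha_1/\beta_1$, sits at $\bar e=\infty$, so the mechanism you want to use is invisible in that chart.

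The fix is the homogeneous blow-up $E,M,C\sim\varepsilon$. Because the $\varepsilon^2$-terms $\beta_1E$, $\beta_2M$, $\gamma_2C$ are linear, this already balances them against $\varepsilon^3\alpha_1\delta$. The chart $K_5$ then again has an attracting critical manifold $\tilde{S}_0=\{C_5=E_5M_5\}$, carrying the planar reduced flow \eqref{scalingC_reduced}. In that flow $(\alpha_1/\beta_1,0)$ is a hyperbolic saddle, with eigenvalues proportional to $-(\alpha_1+\beta_1)$ and $\beta_2-\gamma_2\alpha_1/\beta_1$. The passage is a regular saddle passage: entry along $W^s_{5,sub}=\{M_5=0\}$, exit along $W^u_5$. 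The nullclines funnel $W^u_5$ to $E_5\to0$, $M_5\to\infty$, hence into $K_6$ and back to ${S}_{0,E}^\varepsilon$.

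So what must be computed is this saddle and its invariant manifolds in a reduced flow, not a Riccati-type transverse connection, and the paper needs no entry--exit analysis here. The algebraic expansion of the Dulac map near the saddle is what the exponential contraction along the Fenichel segments must dominate. Along ${S}_{1,1}^\varepsilon$ that contraction is $e^{-c/\varepsilon}$ (rate $\varepsilon^2$ over a time $\varepsilon^{-3}$), not $e^{-c/\varepsilon^2}$.

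With these corrections your outline becomes the paper's proof. Note also that the Banach argument only gives uniqueness near $\Gamma_0$. For the global claim, the paper adds a funnel argument: every relaxation oscillation must cross $\Sigma_1$ on the attracting branch of ${S}_{1,1}^\varepsilon$.
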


\begin{remark} \label{C_problem}
Rather than utilising a successive blow-up procedure (cf.~\cite{kosiukszmolyan,kosiukszmolyan2,kosiukszmolyan3,process}), we blow up the $C$-axis and the origin independently. This approach is valid because the singular jump from ${S}_{0,M}$ to ${S}_{0,E}$ occurs at $C = \frac{\beta_2}{4\gamma_2} > 0$, bounded away from the origin.
\end{remark}

\subsection{Resolution of the $C$-axis (cylindrical blow-up)}
\label{blow_up_C}
To track the flow through the cylindrical blow-up $\Phi(r, \bar{\varepsilon}, \bar{E},\bar{M},\bar{C}) = (r \bar{E}, r \bar{M}, \bar{C},r^2 \bar{\varepsilon})$, we define three local coordinate charts: an entry chart $K_1$, a rescaling chart $K_2$, and an exit chart $K_3$. The entry chart $K_1$ captures the exponential contraction of the incoming fast flow onto a centre manifold, funneling it into the rescaling chart $K_2$. Symmetrically, $K_3$ guides the flow departing the cylinder toward the stable slow manifold ${S}_{0,M}^\varepsilon$. The detailed vector fields for $K_1$ and $K_3$ are deferred to Section SM2 of Supplementary Material II.

\subsubsection{Rescaling chart $K_2$ dynamics}
\label{sec:K2_C}
The core dynamics connecting the two manifolds occur in the rescaling chart $K_2$. Applying the local transformation $E = r_2 E_2, \, M = r_2 M_2, \, C = C_2, \, \varepsilon = r_2^2$ and a time desingularisation $t_{2} = r_2 t$ yields:
\begin{align}
\begin{pmatrix}
\frac{dE_2}{dt_{2}} \\
\frac{dM_2}{dt_{2}} \\
\frac{dC_2}{dt_{2}}
\end{pmatrix} &= \begin{pmatrix}
    \delta \\ \delta \\ 0
\end{pmatrix} (C_2- E_2M_2) + r_2 \begin{pmatrix}
    M_2 \\ M_2 \\ -\delta
\end{pmatrix}(C_2 - E_2 M_2) + r_2^2 \begin{pmatrix}
   0 \\  -\delta \gamma_2 C_2 \\ -M_2C_2 + E_2M_2^2
\end{pmatrix}  + \mathcal{O}(r_2^3).
\label{full_single_epsilon_tilde}
\end{align}
Taking the singular limit $r_2 \to 0$ reveals an attracting critical manifold in the biologically revelant domain: 
$$\hat{{S}}_0 = \{ (E_2,M_2,C_2) \in \mathbb{R}^3_{\geq 0} \mid C_2 = E_2 M_2 \}\,.$$ 
This manifold $\hat{{S}}_0$ acts as a smooth bridge, connecting the $\mathcal{O}(\varepsilon)$-extensions of the original slow manifolds (${S}_{0,E}^\varepsilon$ and ${S}_{0,M}^\varepsilon$) across the previously non-hyperbolic $C$-axis (see Figure \ref{3D_layer_scalingD}a). 
\begin{figure}[ht]
\centering
\begin{subfigure}{0.425\textwidth}
\centering
 \includegraphics[width=1\linewidth]{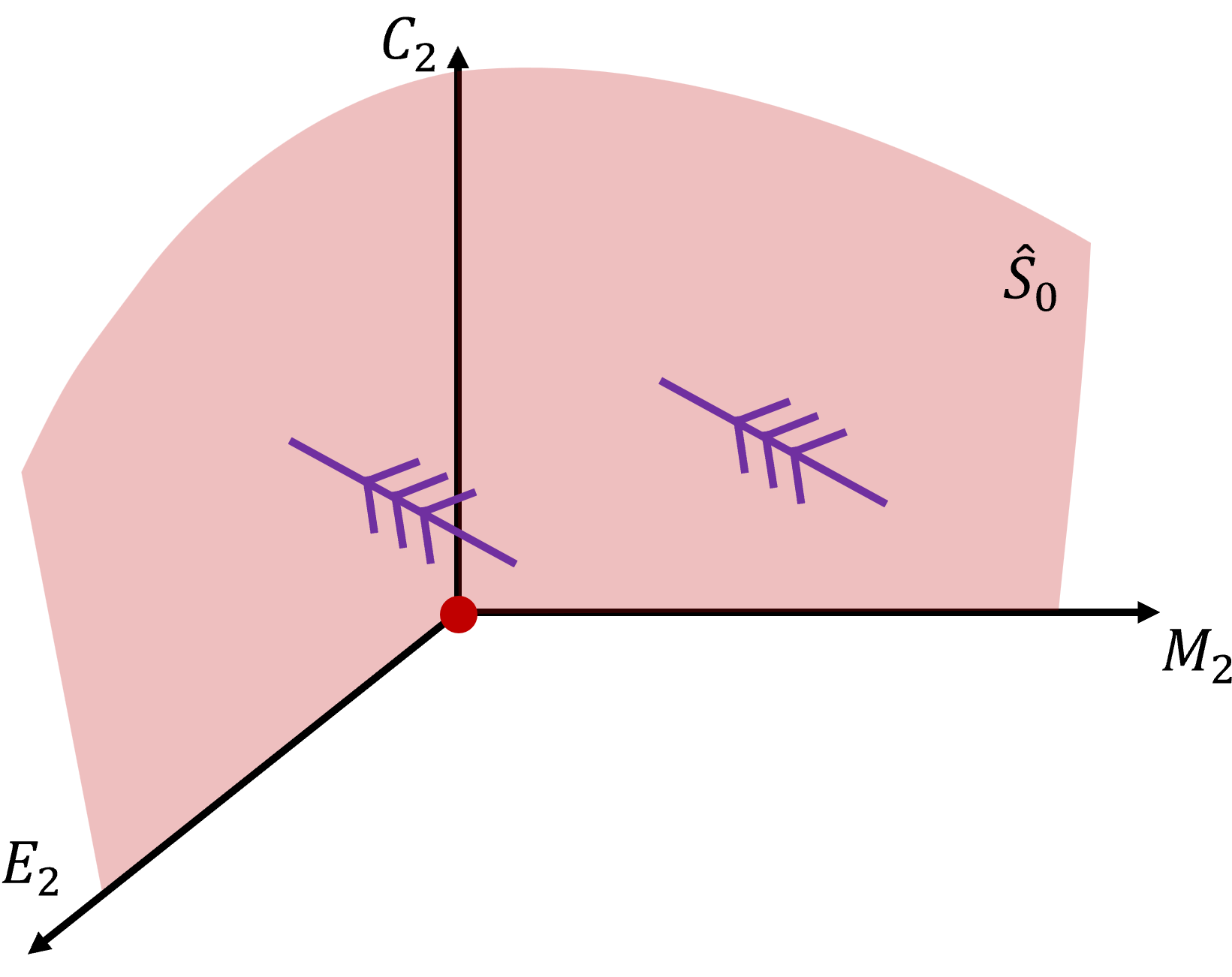}
  \caption{}
\end{subfigure}
\hspace{0.5cm}
\begin{subfigure}{0.4\textwidth}
\centering
 \includegraphics[width=1\linewidth]{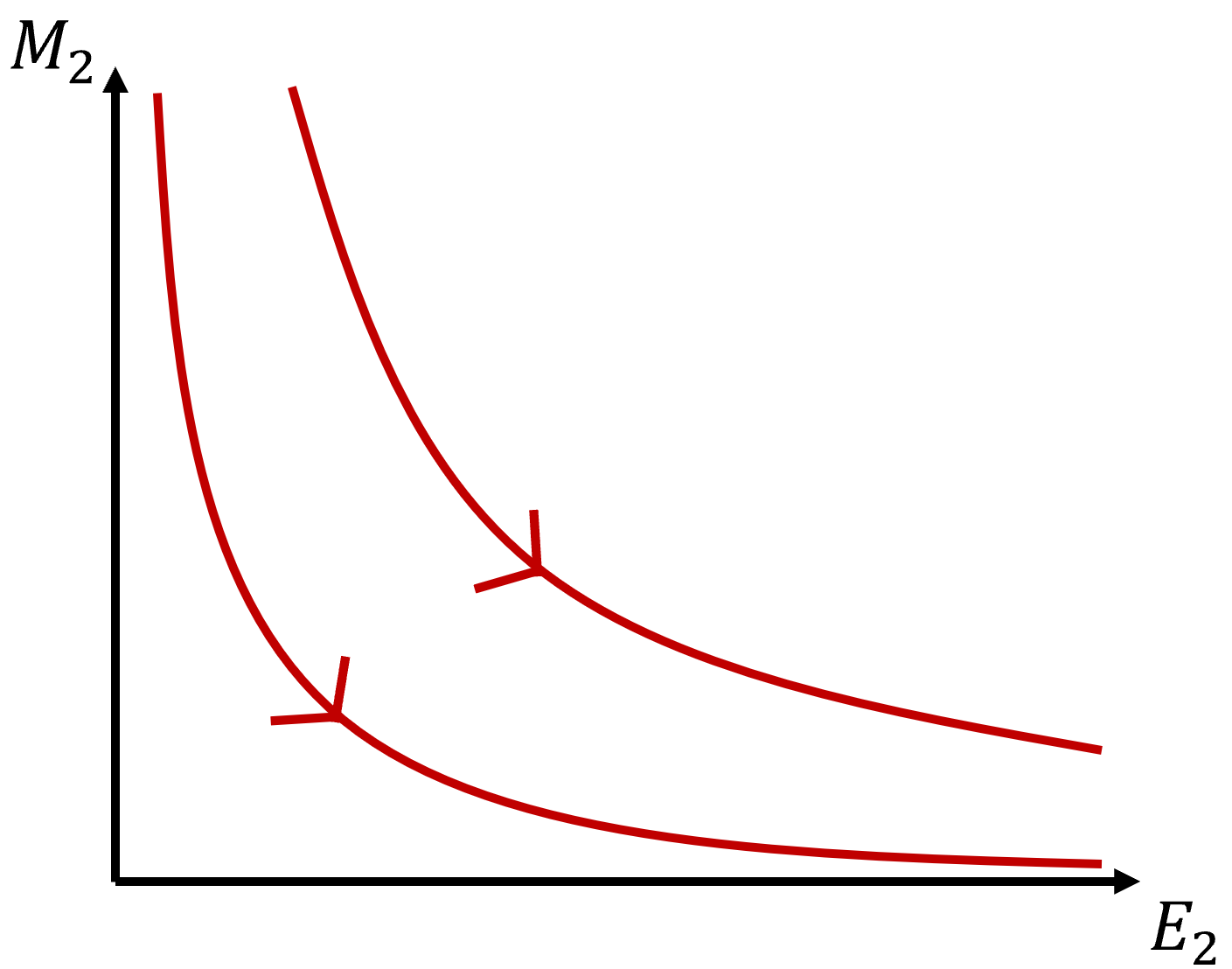}
  \caption{}
\end{subfigure}
  \caption{(a) The layer problem in the rescaling chart $K_2$. The critical manifold $\hat{{S}}_0$ smoothly connects the incoming and outgoing flow. The red dot indicates a loss of normal hyperbolicity (b) Sketch of the flow induced by the slow vector field \eqref{slow_flow_D}. The solution curves are invariant hyperbolas for $C_2 > 0$ fixed.}
\label{3D_layer_scalingD}
\end{figure}
Applying the parametrisation method, the $\mathcal{O}(r_2^2)$-slow vector field on $\hat{{S}}_0^{r_2}$ dictates this passage:
\begin{align}
    \begin{pmatrix}
        \frac{dE_2}{dt_{2}} \\ \frac{dM_2}{dt_{2}} 
    \end{pmatrix} = r_2^2 \frac{\delta \gamma_2 E_2 M_2}{E_2+M_2} \begin{pmatrix}
        E_2 \\ -M_2
    \end{pmatrix} + \mathcal{O}(r_2^3). \label{slow_flow_D}
\end{align}
The solution curves satisfy $M_2(E_2) = D/E_2$ (for constant $D>0$), forming invariant hyperbolas (Figure \ref{3D_layer_scalingD}b). Thus, trajectories enter from $M_2 \to \infty, E_2 \to 0$ and transit smoothly to $M_2 \to 0, E_2 \to \infty$, successfully crossing the $C$-axis.

\subsection{Resolution of the degenerate origin (spherical blow-up)}
\label{blow_up_orig}
To resolve the origin, we apply the spherical blow-up $\bar{\Phi}(r, \bar{\varepsilon}, \bar{E},\bar{M},\bar{C}) = (r \bar{E}, r \bar{M}, r\bar{C},r \bar{\varepsilon})$, again utilising three coordinate charts: the entry chart $K_4$, the rescaling chart $K_5$, and the exit chart $K_6$. The detailed vector fields for $K_4$ and $K_6$ are deferred to Section SM2 of Supplementary Material II.
\subsubsection{Rescaling chart $K_5$ dynamics}
\label{sec:K2_origin}
In the rescaling chart $K_5$ defined by $E = r_5 E_5, M = r_5 M_5, C = r_5 C_5, \varepsilon = r_5$, we apply the time scale $t_5 = \varepsilon t$. The layer limit ($r_5 \to 0$) reveals an attracting critical manifold $\tilde{{S}}_0 = \{(E_5,M_5,C_5) \in \mathbb{R}^3_{\geq 0} \mid C_5=E_5M_5\}$. The entire fast dynamics near the origin has now gained normal hyperbolicity.

Projecting the dynamics onto $\tilde{{S}}_0$ yields the reduced slow flow:
\begin{align}
    \begin{pmatrix}
    \frac{dE_5}{dt_5} \\
    \frac{dM_5}{dt_5}
    \end{pmatrix} &= \dfrac{\varepsilon \delta}{E_5+M_5+1} \begin{pmatrix}
        \gamma_2 E_5^2 M_5 - \beta_1  E_5^2 - \beta_2  E_5M_5 +  (\alpha_1 -\beta_1) E_5 + \alpha_1 \\ -\gamma_2 E_5M_5^2 +  (\beta_1 - \gamma_2) E_5M_5 + \beta_2  M_5^2 + (\beta_2 - \alpha_1) M_5
    \end{pmatrix}. \label{scalingC_reduced}
\end{align}
This system admits an equilibrium at $\tilde{\mathcal{EQ}}_{tf} = (\frac{\alpha_1}{\beta_1}, 0)$, which is a saddle for $\alpha_1 <\frac{\beta_1\beta_2}{\gamma_2}$, and stable for $\alpha_1 >\frac{\beta_1\beta_2}{\gamma_2}$. This corresponds to the tumour-free equilibrium $\mathcal{EQ}_{tf}$ of system \eqref{full_single_epsilon}. The stable manifold of $\tilde{\mathcal{EQ}}_{tf}$ aligns with the invariant subspace $W^s_{5,sub} = \{M_5 = 0\} \cap \tilde{{S}}_0$, which smoothly extends the tumour-free incoming flow from the entry chart $K_4$, and the tumour-free invariant subspace $W^s_{sub}$ in the slow system \eqref{reduction_M0}.

The geometric mechanism of the origin passage is governed by the saddle $\tilde{\mathcal{EQ}}_{tf} = (\frac{\alpha_1}{\beta_1}, 0)$ for $\alpha_1 <\frac{\beta_1\beta_2}{\gamma_2}$. In the singular limit, trajectories enter $K_5$ along $W^s_{5,sub}$, approach $\tilde{\mathcal{EQ}}_{tf}$, and are immediately repelled along its unstable manifold $W^u_5$. As sketched in Figure \ref{K2_origin_eps0}b, the geometry of the $E_5$ and $M_5$ nullclines funnels $W^u_5$ toward the $M_5$-axis (i.e., $E_5 \to 0, M_5 \to \infty$), matching the required entry conditions for the exit chart $K_6$, which returns the flow to ${S}_{0,E}^\varepsilon$. The equations for the nullclines are found in Section SM2 of Supplementary Material II.

\begin{remark}
    The corresponding extension of the slow manifold $S_{0,M}^\varepsilon$ in the exit chart $K_3$ and the entry chart $K_4$ are the centre manifolds $W^c_3$ and $W^c_4$, respectively. It turns out that correction terms beyond the leading-order terms near the equator (i.e., $\{\varepsilon_3 = r_3 = 0\}$ and $\{\varepsilon_4 = r_4 = 0\}$) are required for the power series representations of the centre manifolds $W^c_3$ and $W^c_4$. This is to ensure that the vector fields on the centre manifolds match with the slow vector field \eqref{reduction_M0} on $S_{0,M}^\varepsilon$. This slow vector field \eqref{reduction_M0} was calculated using the parametrisation method, which showcases its necessity for a successful blow-up application. We refer the reader to Remarks SM2.5 and SM2.8 in Supplementary Material II for further details.
\end{remark}

\begin{figure}[ht]
\centering
\begin{subfigure}{0.425\textwidth}
\centering
 \includegraphics[width=1\linewidth]{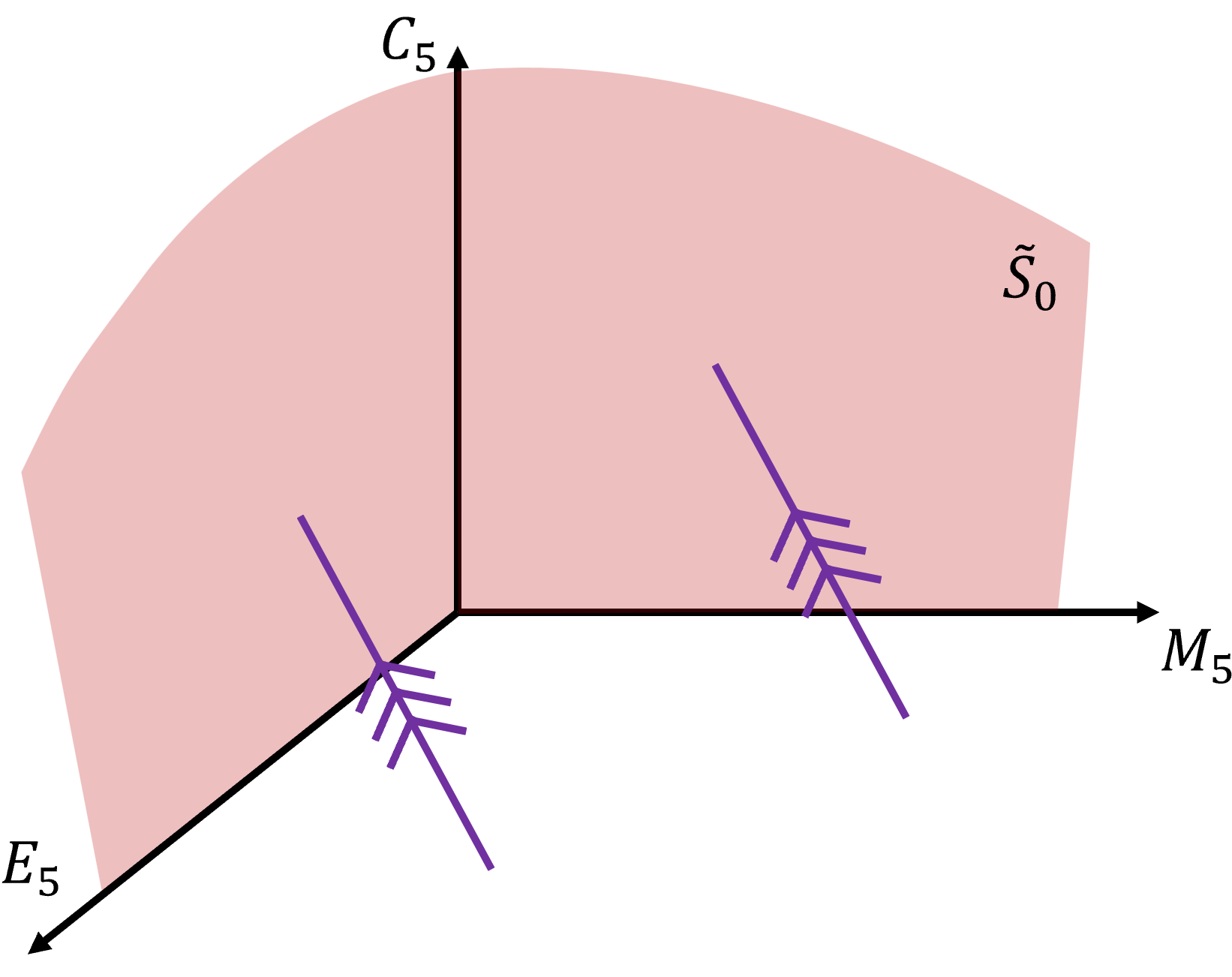}
  \caption{}
\end{subfigure}
\hspace{0.5cm}
\begin{subfigure}{0.4\textwidth}
\centering
 \includegraphics[width=1\linewidth]{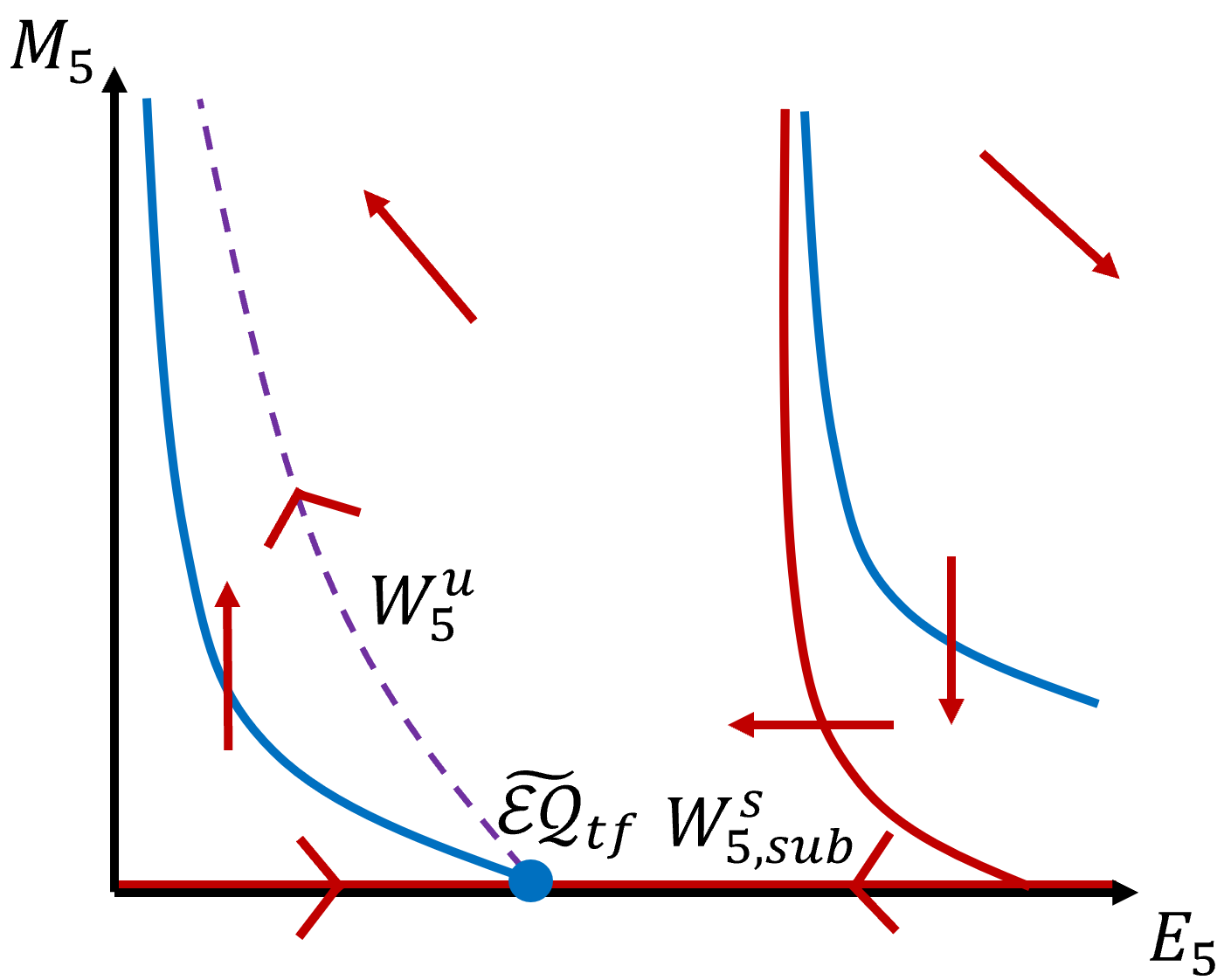}
  \caption{}
\end{subfigure}
  \caption{(a) Sketch of the layer problem the rescaled system in chart $K_5$. (b) Phase portrait of the reduced flow \eqref{scalingC_reduced} on $\tilde{{S}}_0$ for $0 < \alpha_1  < \frac{\beta_1 \beta_2}{\gamma_2}$. The stable manifold (red line) brings the flow into the saddle $\tilde{\mathcal{EQ}}_{tf}$. The nullcline geometry (blue/red curves) funnels the unstable manifold (purple dotted line) toward the $M_5$-axis, guaranteeing exit.}
\label{K2_origin_eps0}
\end{figure}

\subsection{Proof of Theorem \ref{main_theorem}}
With the local geometry fully resolved, the global return mechanism is formalised by composing the sequential local transition maps along the singular cycle $\Gamma_0$:
\begin{align*}
    \Pi = \Pi_7 \circ \Pi_6 \circ \Pi_4 \circ \Pi_3 \circ \Pi_1\,,
\end{align*}
where $\Pi_1$ represents the composite map through the entire cylindrical blow-up (mapping across charts $K_1 \to K_2 \to K_3$), $\Pi_4$ represents the composite map through the spherical blow-up (mapping across charts $K_4 \to K_5 \to K_6$), and $\Pi_3, \Pi_6, \Pi_7$ track the slow and fast flows between these degenerate regions. We refer the reader to Section SM3 in Supplementary Material II for the rigorous decomposition of these maps using local intra-chart flows and transition diffeomorphisms.

\begin{proof}[Proof of Theorem \ref{main_theorem}]
By Tikhonov-Fenichel theory and the hyperbolicity established inside the blow-up charts, each local map $\Pi_i$ is a well-defined diffeomorphism for sufficiently small $\varepsilon > 0$. As detailed in Propositions SM3.1, SM3.5, and SM3.8 in Supplementary Material II, the strong contraction onto the normally hyperbolic slow manifolds ${S}_{0,M}^\varepsilon$ and ${S}_{1,1}^\varepsilon$ dominates any mild expansion near the saddles. Therefore, the composition $\Pi$ is a strict contraction mapping on the cross-section $\Sigma_1$. By the \textit{Contraction Mapping Theorem}, $\Pi$ admits a unique, exponentially stable fixed point, proving the existence of the attracting limit cycle $\Gamma_\varepsilon$.

To prove uniqueness \textit{globally}, we note that the basin of attraction for the stable branch of ${S}_{1,1}^\varepsilon$ acts as a global funnel. Any constructed relaxation oscillation must then eventually pass through the cross-section $\Sigma_1$. The contraction of $\Pi$ guarantees that any initial condition in this basin will converge to the unique limit cycle $\Gamma_\varepsilon$, rendering all such oscillations exponentially close to each other.
\end{proof}

\subsection{Extensions to other parameter regions (regions A, A' and B')}
\label{sec:extend}
The geometric framework developed above extends naturally to prove the observed dynamics in adjacent parameter regimes. 

For Region B', if $\beta_1 \in (\beta_{1,sBT}, \gamma_2)$, the contraction arguments remain identical, establishing stable relaxation oscillations until $\alpha_1$ decreases to intersect the homoclinic or SNPO boundaries discussed in Section \ref{sec:singular_bif1}. If $\beta_1 \in (\gamma_2, \beta_{1,cusp})$, the entry geometry in chart $K_4$ shifts: the incoming flow approaches a different saddle equilibrium $p_{other,4}$ rather than the original entry point. However, trajectories passing $p_{other,4}$ are still guided towards $W^s_{5,sub}$ in chart $K_5$, preserving the global contraction.

Finally, the same blow-up maps confirm the existence of the transient large excursions defined in Section \ref{sec:boundaryAB} for Regions A and A'. For these parameters, the flow successfully traverses the cylinder and the sphere but eventually maps into the basin of attraction of the stable node $M_{l,+}$ on the stable branch of $S_{1,1}^\varepsilon$, rather than forming a closed periodic loop. This rigorous topological connection justifies the excitable behaviour observed numerically.

\section{Singular bifurcations at the origin: the organising centre}
\label{sec:degenerate bifurcations}
In this section, we uncover the topological organising centre that governs the boundary of oscillatory dynamics detailed in Theorem \ref{main_theorem} given by $\alpha_1 \in \left(\alpha_{1,SN,+},\frac{\beta_1\beta_2}{\gamma_2} - \mathcal{O}(\varepsilon)\right)$, and its relation to the unidentified bifurcation in Figure \ref{fig:cusp_regime}. Our analysis identifies another sAH bifurcation, and a singular transcritical (sTC) bifurcation. {We ultimately conjecture these are organised by a novel \emph{singular transcritical Bogdanov-Takens (stBT)} bifurcation.} 

\begin{remark}
    For brevity in this section, we drop the subscript $5$ from the spherical blow-up chart $K_5$.
\end{remark}

\subsection{The nilpotent Jacobian}
Recall that $\tilde{\mathcal{EQ}}_{tf}$ of system \eqref{scalingC_reduced} changes stability at the critical parameter value $\alpha_1 = \frac{\beta_1\beta_2}{\gamma_2}$. At this parameter value, system \eqref{scalingC_reduced} may be rewritten as
\begin{align}
    \begin{pmatrix}
    \frac{dE}{dt} \\
    \frac{dM}{dt}
    \end{pmatrix} &= \varepsilon \bar{N}_{0} \bar{f}_{0} = \dfrac{\varepsilon \delta}{\gamma_2(E+M+1)}\begin{pmatrix}
        \beta_1 + \beta_1 E - \gamma_2 E M \\ (\gamma_2 - \beta_1 + \gamma_2 M)M
    \end{pmatrix} \left( \beta_2 - \gamma_2 E\right). \label{scalingC_reduced2}
\end{align}
The layer problem possesses a critical manifold $\tilde{S}_{0,2} = \left\{ (E,M) \in \mathbb{R}^2 \, | \, E= \frac{\beta_2}{\gamma_2}\right\}$, and its associated nontrivial eigenvalue undergoes a loss of normal hyperbolicity at $M = \frac{\beta_1(\beta_2 + \gamma_2)}{\beta_2 \gamma_2}$.

\begin{lemma} \label{lemma:sBT(B)}
    $(E,M)_{sBT(B)} = \left(\frac{\beta_2}{\gamma_2},\frac{\beta_1(\beta_2 + \gamma_2)}{\beta_2 \gamma_2}\right)$ is a contact point of order one for the layer problem of system \eqref{scalingC_reduced2}.
\end{lemma}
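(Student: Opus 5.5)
The plan is to check the contact-point conditions directly, using the factorised form $\varepsilon\bar N_0\bar f_0$ of system \eqref{scalingC_reduced2}. I take a contact point of order one to mean the analogue of the fold conditions in Lemma \ref{lemma:fold_point}, now for a layer problem of the general form $N\!f$. A point $p\in\tilde S_{0,2}$ is such a point if the following hold. First, $D\bar f_0(p)\neq 0$ and $\bar N_0(p)\neq 0$, so that the critical manifold and the fast fibre are regular there. Second, the nontrivial eigenvalue $D\bar f_0\,\bar N_0$ vanishes at $p$, so the fibre through $p$ is tangent to $\tilde S_{0,2}$. Third, the tangency is exactly quadratic. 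To state this last condition, define $g(s)=\bar f_0(\phi_s(p))$, where $\phi_s$ is the flow of $\bar N_0$ (equivalently the fast fibre through $p$). Then $g(0)=g'(0)=0$ and $g''(0)\neq 0$.

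The first steps are routine. Set $\bar f_0=\beta_2-\gamma_2E$, so $D\bar f_0=(-\gamma_2,0)\neq 0$ and $\tilde S_{0,2}=\{E=\beta_2/\gamma_2\}$ is a regular line. Write
\begin{align*}
\bar N_0=\frac{\delta}{\gamma_2(E+M+1)}\big(\beta_1+\beta_1E-\gamma_2EM,\ (\gamma_2-\beta_1+\gamma_2M)M\big)^\top .
\end{align*}
Then
\begin{align*}
D\bar f_0\,\bar N_0=-\frac{\delta(\beta_1+\beta_1E-\gamma_2EM)}{E+M+1}.
\end{align*}
On $E=\beta_2/\gamma_2$ this vanishes exactly when $\beta_1(1+\beta_2/\gamma_2)=\beta_2M$, that is, at $M=\beta_1(\beta_2+\gamma_2)/(\beta_2\gamma_2)$, which is the point in the statement. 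Everywhere else on the line the eigenvalue is nonzero, and it changes sign as $M$ crosses this value, which is the expected behaviour at a fold-type contact.

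For the second-order condition, note that $\bar f_0$ is affine, so $D^2\bar f_0=0$ and
\begin{align*}
g''(0)=D\bar f_0\,(D\bar N_0\,\bar N_0)(p)=-\gamma_2\big(\partial_E N_E\,N_E+\partial_M N_E\,N_M\big)(p).
\end{align*}
Since $N_E(p)=0$, only the term $\partial_M N_E\,N_M$ survives. At $p$, the derivative of the prefactor also multiplies $N_E=0$, so $\partial_M N_E(p)=\frac{\delta}{\gamma_2(E+M+1)}(-\gamma_2E)=-\frac{\delta\beta_2}{\gamma_2(E+M+1)}$, which is nonzero. It then remains to show that $N_M(p)\propto(\gamma_2-\beta_1+\gamma_2M)M\neq0$. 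This same fact gives $\bar N_0(p)\neq0$.

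The only step I expect to need care is this last nonvanishing of $N_M(p)$. Since $M_{sBT(B)}>0$, it reduces to showing $\gamma_2(1+M)\neq\beta_1$. Substituting $M$, this becomes $\gamma_2+\beta_1+\beta_1\gamma_2/\beta_2\neq\beta_1$, i.e.\ $\gamma_2(1+\beta_1/\beta_2)\neq0$, which holds for positive parameters. Hence $g''(0)=\gamma_2\,\delta\beta_2 N_M(p)/(\gamma_2(E+M+1))\neq0$, so the contact is exactly quadratic, i.e.\ of order one. As a sanity check, I would also verify that $p$ lies in the positive quadrant, so that the contact point is in the biologically relevant domain of $K_5$.
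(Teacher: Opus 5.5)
Your proposal is correct and is essentially the paper's argument: the paper checks that the layer Jacobian $\bar N_0 D\bar f_0$ has rank one at the point, and that the second-order non-degeneracy quantity of Lemma 4.1 in \cite{wechselberger2020} is nonzero; your $g''(0)=D(D\bar f_0\,\bar N_0)\,\bar N_0$ is exactly that quantity. Using $E+M+1=\frac{(\beta_1+\beta_2)(\beta_2+\gamma_2)}{\beta_2\gamma_2}$ at $p$, your expression evaluates to the paper's value $\frac{\beta_1\beta_2\delta^2\gamma_2}{(\beta_1+\beta_2)(\beta_2+\gamma_2)}$.
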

\begin{proof}
    Evaluating the Jacobian $D\bar{f}_0$ at $(E,M)_{sBT(B)}$ yields a rank-1 matrix. Calculations involving the Hessian with the left and right nullvectors (as detailed in Lemma 4.1 of \cite{wechselberger2020}) yields a strictly non-zero quantity $\frac{\beta_1\beta_2 \delta^2 \gamma_2}{(\beta_1+\beta_2)(\beta_2 + \gamma_2)} \neq 0$, satisfying the generic conditions for an order-one contact point.
\end{proof}
In order to check if $(E,M)|_{{sBT(B)}}$ is a pseudo-singularity, which would imply an sAH bifurcation in the $\varepsilon > 0$ unfolding, we need to obtain the reduced system on $\tilde{S}_{0,2}$. Thus, we must compute an $\mathcal{O}(\varepsilon)$-correction to system \eqref{scalingC_reduced} using the parametrisation method:
\begin{align}
    \begin{pmatrix}
    \frac{dE}{dt} \\
    \frac{dM}{dt}
    \end{pmatrix} &= \tilde{R}_1 + \varepsilon \tilde{R}_2  \nonumber \\
    &=\dfrac{\delta}{E+M+1} \begin{pmatrix}
        \gamma_2 E^2 M - \beta_1  E^2 - \beta_2  EM +  (\alpha_1 -\beta_1) E + \alpha_1 \\ -\gamma_2 EM^2 +  (\beta_1 - \gamma_2) EM + \beta_2  M^2 + (\beta_2 - \alpha_1) M
    \end{pmatrix} + \varepsilon \begin{pmatrix}
        \tilde{R}_{2,1} \\ \tilde{R}_{2,2} 
    \end{pmatrix}, \label{scalingC_reduced_again}
\end{align}
where we have rescaled time $t \to t/\varepsilon$ and the correction terms $\tilde{R}_{2,1}$ and $\tilde{R}_{2,2}$ are rational functions of $(E,M)$; see Section SM2 of Supplementary Material I.
{
The reduced system at $\alpha_1 =  \frac{\beta_1\beta_2}{\gamma_2}$ on the critical manifold $\tilde{S}_{0,2}$ is then given by
\begin{align}
    \dfrac{dM}{dt} = -\frac{\beta _{2}\gamma_2 M^2 \,\left(\alpha _{2}+\beta _{1}\delta -\delta \gamma _{1}\right) }{\beta _{1}\beta _{2}+\beta _{1}\gamma _{2}-\beta _{2}\gamma _{2} M}.
\end{align}}
{It turns out that this is only a regular jump point.} 
{\begin{lemma}
    Given $\alpha_1 =\frac{\beta_1\beta_2}{\gamma_2}$ and $\beta_1 > 0$, $(E,M)_{sBT(B)} = \left(\frac{\beta_2}{\gamma_2}, \frac{\beta_1\beta_2 + \beta_1 \gamma_2}{\beta_2\gamma_2} \right)$ is a regular jump point of the layer problem of system \eqref{scalingC_reduced_again}.
\end{lemma}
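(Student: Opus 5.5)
We will verify the standard characterisation of a regular jump point for a planar fast-slow system with a non-degenerate fold (contact point of order one): the point must be a fold of the layer problem, and the reduced flow on the critical manifold must point towards the fold with non-zero speed there. The first part is exactly Lemma \ref{lemma:sBT(B)}, so it remains only to analyse the reduced (slow) flow on $\tilde{S}_{0,2}$ near $M_* := \frac{\beta_1(\beta_2+\gamma_2)}{\beta_2\gamma_2}$.

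\textbf{Step 1 (reduced flow).} We use the reduced equation on $\tilde{S}_{0,2}$ displayed above,
\begin{align*}
\frac{dM}{dt} = -\frac{\beta_2\gamma_2 M^2(\alpha_2+\beta_1\delta-\delta\gamma_1)}{\beta_2\gamma_2\,(M_*-M)},
\end{align*}
which is obtained from \eqref{scalingC_reduced_again} by projecting the $\mathcal{O}(\varepsilon)$ correction $\tilde R_2$ along the fast fibre $\bar N_0$ with the left nullvector. This is the usual $-\mathrm{adj}(D\bar f_0 \bar N_0)\,\bar f_0$-type construction, evaluated at $\alpha_1=\beta_1\beta_2/\gamma_2$. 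The denominator vanishes precisely at $M=M_*$, which is consistent with the loss of normal hyperbolicity found in Lemma \ref{lemma:sBT(B)}.

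\textbf{Step 2 (desingularised flow at the fold).} We multiply by $(M_*-M)$, with the usual orientation reversal on one branch. The desingularised vector field at the fold is then $-M_*^2(\alpha_2+\beta_1\delta-\delta\gamma_1)$. Since $\beta_1>0$, we have $M_*>0$. A pseudo-singularity would therefore require $\alpha_2+\beta_1\delta-\delta\gamma_1=0$. With the values in Table \ref{parameter_values_new}, $\alpha_2-\delta\gamma_1 \approx 0.6225-0.0345>0$, so this quantity is strictly positive for every $\beta_1>0$. Hence the desingularised flow does not vanish at $M_*$, and the point is not a folded/pseudo singularity.

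\textbf{Step 3 (direction towards the fold).} We must check that the reduced flow on the attracting side of $\tilde S_{0,2}$ moves towards $M_*$, so that trajectories reach the fold and jump, rather than leaving it. To do this we compute the sign of the nontrivial layer eigenvalue $D\bar f_0\bar N_0$ along $E=\beta_2/\gamma_2$. From \eqref{scalingC_reduced2} it is proportional to $-\gamma_2(\beta_1+\beta_1E-\gamma_2EM)$, which is a linear function of $M$ changing sign at $M_*$. We then compare this with the sign of $dM/dt$, namely $-(\ldots)/(M_*-M)$.

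We expect Step 3 to be the main obstacle, because the signs are easy to get wrong. The key observation is that the factor $(M_*-M)$ in the denominator is the same factor that governs attraction. On one side of $M_*$ the branch is attracting; for $M>M_*$ the nontrivial eigenvalue is sign-determined by $-(\beta_1+\beta_1 E-\gamma_2EM)>0$ up to the positive prefactor, so the attracting branch is $M<M_*$. On that branch $dM/dt<0$, which would carry trajectories away from the fold; on the attracting branch one must carefully recheck the overall sign, including the prefactor $\varepsilon\delta/(\gamma_2(E+M+1))$ and the sign of $\bar N_0$. We will settle this by evaluating both signs explicitly at a sample point near $M_*$. With $M_*^2>0$ and transversality from Lemma \ref{lemma:sBT(B)}, the standard Krupa--Szmolyan fold theorem then gives that $(E,M)_{sBT(B)}$ is a regular jump point.
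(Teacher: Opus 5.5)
Your Steps 1--2 are in substance the paper's proof. The paper takes the order-one contact from Lemma~\ref{lemma:sBT(B)} and then checks only one further condition. It requires the desingularised reduced field $\bar N_0\operatorname{adj}(D\bar f_0\bar N_0)D\bar f_0\tilde R_2$ to be nonzero at the point, and finds it equal to $\bigl(0,-\beta_1^2\delta(\beta_2+\gamma_2)(\alpha_2+\beta_1\delta-\delta\gamma_1)/(\gamma_2(\beta_1+\beta_2))\bigr)^\top$. This differs from your value $-M_*^2(\alpha_2+\beta_1\delta-\delta\gamma_1)$ only by the positive factor $\delta\beta_2^2\gamma_2/((\beta_1+\beta_2)(\beta_2+\gamma_2))$. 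That factor arises because the paper desingularises with $-D\bar f_0\bar N_0$ rather than $M_*-M$. Your explicit check that $\alpha_2-\delta\gamma_1>0$ with the tabulated values supplies a step the paper leaves implicit.

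The genuine problem is Step 3. You declare it necessary, but it cannot be completed in the direction you need. Along $E=\beta_2/\gamma_2$ the nontrivial eigenvalue is $D\bar f_0\bar N_0=-\delta\beta_2(M_*-M)/(E+M+1)$, so the attracting branch is $M<M_*$, as you correctly found. There the displayed reduced equation gives $dM/dt=-M^2(\alpha_2+\beta_1\delta-\delta\gamma_1)/(M_*-M)<0$. The reduced flow on the attracting branch therefore runs \emph{away} from the fold, down to $\tilde{\mathcal{EQ}}_{tf}=(\beta_2/\gamma_2,0)$. On the repelling branch it also runs away from the fold.

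Rechecking the prefactor $\delta/(\gamma_2(E+M+1))$ or the sign of $\bar N_0$ cannot change this. The prefactor is positive, and the direction is already encoded in the reduced equation you are using; your first sign computation was right. So if a regular jump point had to have the flow on the attracting branch pointing towards the fold, the statement would be false. Your closing appeal to the Krupa--Szmolyan fold theorem also fails in forward time, because no trajectory on the attracting branch reaches this fold.

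The lemma holds in the sense the paper uses, namely the ``final condition'' of Definition 4.4 in \cite{wechselberger2020}. That condition asks for an order-one contact point at which the desingularised reduced flow does not vanish, i.e.\ a contact point that is not a pseudo-singularity. This is exactly the role the lemma plays in the paper: a contrast with the pseudo-singularity that appears only after the rescaling $\alpha_1\to\varepsilon\alpha_1$. Delete Step 3 and the fold-theorem appeal, and state this definition explicitly. Lemma~\ref{lemma:sBT(B)} together with your Step 2 is then a complete proof. It is worth remarking that ``jump'' here implies no forward-time escape from the fold.
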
}

{\begin{proof}
    From Lemma \ref{lemma:sBT(B)}, $(E,M)|_{sBT(B)}$ is a contact point of order one. The final condition (see Definition 4.4 in \cite{wechselberger2020}) is
    \begin{align*}
        \bar{N}_0 \adj{(D\bar{f}_0 \bar{N}_0)} D \bar{f}_0 \tilde{R}_2 = \begin{pmatrix}
            0 \\ -\frac{\beta_1^2 \delta ( \beta_2 + \gamma_2) ( \alpha_2 + \beta_1 \delta - \delta \gamma_1)}{\gamma_2(\beta_1+\beta_2)}
        \end{pmatrix} \neq \begin{pmatrix} 0\\0\end{pmatrix}\,,
    \end{align*} i.e., we have a regular jump point.
\end{proof}}

However, evaluating the Jacobian of this fully expanded $\mathcal{O}(\varepsilon)$-corrected system at the coordinates $(E,M) = (E,M)_{sBT(B)}$ and parameters $(\alpha_1,\varepsilon) =\left(\frac{\beta_1\beta_2}{\gamma_2},0\right)$ leads directly to a nilpotent structure,
\begin{align} \label{J_BT}
    J &= \begin{pmatrix}
        0 & 0 \\  -\frac{\beta_1 \delta \gamma_2}{\beta_2} & 0
    \end{pmatrix}\,,
\end{align}
which is the geometric hallmark of a Bogdanov-Takens (BT) bifurcation. 

\begin{lemma} \label{lemma:typeB}
    For $\varepsilon > 0$ sufficiently small, applying a near-identity coordinate shift translates system \eqref{scalingC_reduced_again} near $(E,M)_{sBT(B)}$ into a modified BT normal form:
    \begin{align} \label{eq:AH_not_normal}
    \begin{pmatrix}
        \frac{d\xi_1}{dt} \\
        \frac{d \xi_2}{dt} 
    \end{pmatrix} = \begin{pmatrix}
          \xi_2 \\ \tilde{\mu}_1 + \tilde{\mu}_2 \xi_1 + \tilde{A} \xi_1^2 + \tilde{B} \xi_1 \xi_2
    \end{pmatrix},
    \end{align}
    where $(\tilde{\mu}_1,\tilde{\mu}_2) \to (0,0)$ as $\left(\alpha_1 - \frac{\beta_1\beta_2}{\gamma_2}, \varepsilon\right) \to (0,0)$.
\end{lemma}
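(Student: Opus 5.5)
The reduction follows the standard Bogdanov--Takens normal-form procedure: linear reduction of the nilpotent Jacobian, a Taylor expansion to second order, and near-identity transformations, all carried out with $(\mu,\varepsilon)$ as unfolding parameters, where $\mu := \alpha_1 - \frac{\beta_1\beta_2}{\gamma_2}$. Since \eqref{scalingC_reduced_again} is a smooth (rational, with nonvanishing denominator $E+M+1$) planar vector field in $(E,M)$ depending smoothly on $(\mu,\varepsilon)$, we work on a small neighbourhood of $(E,M)_{sBT(B)}$ in phase space and of $(0,0)$ in parameter space.

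\emph{Step 1 (translation and linear change).} Translate $(E,M)_{sBT(B)}$ to the origin. By \eqref{J_BT}, at $(\mu,\varepsilon)=(0,0)$ the linear part is nilpotent with kernel spanned by the $M$-direction and range spanned by the $M$-direction as well. Choose the basis $v_0=(0,1)^\top$ (the eigenvector) and a generalised eigenvector $v_1$ with $Jv_1 = v_0$; for instance $v_1 = \bigl(-\tfrac{\beta_2}{\beta_1\delta\gamma_2},0\bigr)^\top$. In coordinates $(y_1,y_2)$ defined by $(E,M)-(E,M)_{sBT(B)} = y_1 v_0 + y_2 v_1$, the linear part becomes the Jordan block $\begin{pmatrix}0&1\\0&0\end{pmatrix}$. (Up to relabelling, $y_1$ plays the role of the future $\xi_1$.) Expand the field to second order in $(y_1,y_2)$ and first order in $(\mu,\varepsilon)$:
\[
\dot y_1 = y_2 + a_{00}(\mu,\varepsilon) + a_{10}y_1 + a_{01}y_2 + \tfrac12 a_{20}y_1^2 + a_{11}y_1y_2 + \tfrac12 a_{02}y_2^2 + \dots,
\]
and similarly for $\dot y_2$ with coefficients $b_{ij}$, where $a_{00},b_{00},a_{10},\dots$ vanish at $(\mu,\varepsilon)=(0,0)$.

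\emph{Step 2 (near-identity transformations).} First set $\xi_1 = y_1 + \dots$ so that $\dot\xi_1 = \xi_2$ exactly: take $\xi_2 := $ the full right-hand side of $\dot y_1$. This is a valid near-identity change because $\partial_{y_2}$ of that expression equals $1$ at the BT point. The $\xi_2$-equation then reads $\dot\xi_2 = g(\xi_1,\xi_2,\mu,\varepsilon)$. Next, apply the standard quadratic near-identity transformation (as in Kuznetsov's BT reduction) together with a parameter-dependent time rescaling and shift of $\xi_1$. These remove the $\xi_2$ and $\xi_2^2$ terms, leaving
\[
\dot\xi_2 = \tilde\mu_1 + \tilde\mu_2\xi_1 + \tilde A\xi_1^2 + \tilde B\xi_1\xi_2 + O(|\xi|^3).
\]
The shift of $\xi_1$ is the step that eliminates the linear $\xi_2$ coefficient (the trace term). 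Here $\tilde\mu_1,\tilde\mu_2$ are smooth in $(\mu,\varepsilon)$ and vanish at $(0,0)$, because every constant and linear coefficient does. This gives the stated limit $(\tilde\mu_1,\tilde\mu_2)\to(0,0)$.

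\emph{Step 3 (nondegeneracy).} To ensure the form is meaningful, compute $\tilde A = \tfrac12(a_{20}+b_{11})$ and $\tilde B = b_{20}$ in Kuznetsov's formulas, evaluated at the BT point using \eqref{scalingC_reduced2} at $\varepsilon=0$, and show they are nonzero for the parameter values of Table~\ref{parameter_values_new}.

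\emph{Main obstacle.} I expect the hard part to be the $\varepsilon$-dependence. The $\varepsilon$-correction $\tilde R_2$ is needed to obtain the $\varepsilon$-linear terms in $a_{00},b_{00},a_{10},b_{01}$. Moreover, the regular-jump computation above shows that the critical-manifold structure makes $\tilde\mu_1$ and $\tilde\mu_2$ degenerate in a nonstandard way: $\tilde\mu_1$ may be of order $\mu\varepsilon$ or $\varepsilon^2$ rather than linearly independent of $\tilde\mu_2$. This is why the statement asserts only a \emph{modified} BT form with $(\tilde\mu_1,\tilde\mu_2)\to 0$, not a versal unfolding. I would therefore not attempt to prove transversality of $(\mu,\varepsilon)\mapsto(\tilde\mu_1,\tilde\mu_2)$. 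I would only record the leading terms of $\tilde\mu_{1,2}$ from the explicit $\tilde R_2$ of Supplementary Material I.
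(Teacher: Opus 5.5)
\textbf{Verdict: there is a gap.} Your Steps 1--2 follow the paper's route: a second-order Taylor expansion at the nilpotent point, then Kuznetsov's BT reduction. However, you never correctly identify the nondegeneracy that Step 2 needs, and Step 3 sets out to prove something false.

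\textbf{The coefficients are swapped.} In Kuznetsov's reduction the $\xi_1^2$-coefficient is $\tilde A=\tfrac12 b_{20}$ and the $\xi_1\xi_2$-coefficient is $\tilde B=a_{20}+b_{11}$. You have these the other way round.

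\textbf{One of them vanishes.} At $(\mu,\varepsilon)=(0,0)$ the field is \eqref{scalingC_reduced2}, which carries the factor $\beta_2-\gamma_2 E$. So it vanishes identically on the critical manifold $\tilde S_{0,2}=\{E=\beta_2/\gamma_2\}$ through the point. In your coordinates $E-\beta_2/\gamma_2\propto y_2$, so every pure power of $y_1$ drops out of both components, giving $a_{20}=b_{20}=0$. Thus $\tilde A\to 0$ in the singular limit, which is exactly the paper's remark after the lemma. Your plan to show $b_{20}\neq 0$ cannot succeed.

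\textbf{What is nonzero, and why it matters.} The nonzero quantity is $\tilde B=a_{20}+b_{11}=b_{11}=\delta\beta_2/(E+M+1)$, evaluated at $(E,M)_{sBT(B)}$. It comes from $\partial_M(\beta_1+\beta_1E-\gamma_2EM)=-\gamma_2E=-\beta_2$ there, and it is precisely the paper's check $a_{20}+b_{11}\neq 0$. This is not an optional after-the-fact check. The parameter-dependent shift of $\xi_1$ in Step 2 that removes the $\xi_2$-term comes from the implicit function theorem and requires $\tilde B\neq 0$ at the BT point. Until this is verified, Step 2 is unjustified; once it is, your Steps 1--2 do yield the stated form with $(\tilde\mu_1,\tilde\mu_2)\to(0,0)$. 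The word ``modified'' refers to $\tilde A$ vanishing, so that it cannot be scaled to $\pm 1$; it does not refer to a degenerate parameter map.

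\textbf{Your ``main obstacle'' is misdiagnosed.} To first order, $\tilde\mu_1$ is a nonzero multiple of the $E$-component of the field at the point, namely $\mu\,\delta(E+1)/(E+M+1)+\varepsilon\tilde R_{2,1}$ evaluated at $(E,M)_{sBT(B)}$. Both coefficients are nonzero:
\begin{itemize}
\item the $\mu$-coefficient directly from \eqref{scalingC_reduced};
\item the $\varepsilon$-coefficient because the regular-jump-point computation is precisely the statement $\tilde R_{2,1}\neq 0$ there. This holds since $\bar N_0$ is parallel to $(0,1)^\top$ at the point and $D\bar f_0=(-\gamma_2,0)$.
\end{itemize}
So $\tilde\mu_1$ is neither $O(\mu\varepsilon)$ nor $O(\varepsilon^2)$. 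To leading order it vanishes on the line $\mu=\varepsilon\,\alpha_{1,sAH(B)}$, with $\alpha_{1,sAH(B)}$ the rescaled value in Corollary~\ref{label:cor_TC}, which is a good consistency check.

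The paper asserts that $(\alpha_1,\varepsilon)\mapsto(\tilde\mu_1,\tilde\mu_2)$ is regular for $\beta_1>0$. The literal statement does not require this, but your reason for giving it up is wrong: the degeneracy sits in $\tilde A$, not in the unfolding parameters.
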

\begin{proof}
    Applying a Taylor expansion up to second order near contact point $(E,M) = (E,M)_{sBT(B)}$ satisfies the generic $a_{20} + b_{11} \neq 0$ BT condition, which are certain quadratic coefficients; see \cite{kuznetsov_bifurcation}. The parameter map  $(\alpha_1,\varepsilon) \to (\tilde{\mu}_1,\tilde{\mu}_2)$ is regular for $\beta_1 > 0$, guaranteeing the existence of the modified normal form \eqref{eq:AH_not_normal}.
\end{proof}

Since $\tilde{A}$ vanishes at the singular limit $\left(\alpha_1 - \frac{\beta_1\beta_2}{\gamma_2}, \varepsilon\right) \to (0,0) \to (0,0)$, classical smooth normal form transformations break down, reflecting the inherently singular nature of this bifurcation. Nevertheless, analysing the unfolded system reveals an AH bifurcation emanating from this nilpotent point, and defer these calculations to Section SM4 of Supplementary Material II. We note that that its first Lyapunov coefficient is $\tilde{l}_1 > 0$ for $\beta_1 \gtrsim 0$ and $\varepsilon >0$, resulting in a \textit{subcritical} AH.

{In the sBT framework of De Maesschalck and Dumortier \cite{sfBT}, our system corresponds to Case 1b of their Figure 8, where an Andronov-Hopf (AH) bifurcation emanates from the sBT point for $\varepsilon > 0$. This justifies the suffix ``(B)'' in sBT(B) and AH(B) to denote the specific restricted unfolding from this nilpotent BT-like point. We note that capturing this unfolding is fundamentally a codimension-2 problem, as both parameters $(\alpha_1,\varepsilon)$ must be varied to trace the emergence of the AH(B) curve in parameter space.} 

\subsection{Resolution by parameter rescaling}
The previous analysis indicates that at $\alpha_1 = \frac{\beta_1\beta_2}{\gamma_2}$ there is a nilpotent BT-like point, as well as a change in stability of $\tilde{\mathcal{EQ}}_{tf}$ (see Section \ref{sec:K2_origin}) occurring at distinct $M$-coordinates. 
Let us apply the translation $(E,M,\alpha_1) \to \left( E+\frac{\beta_2}{\gamma_2}, M, \alpha_1 + \frac{\beta_1 \beta_2}{\gamma_2}\right)$ to system \eqref{scalingC_reduced_again} and rescale time $dt\to H(E,M)dt$ to remove denominators. Applying $\alpha_1 \to \varepsilon \alpha_1$ then gives

\begin{align} \label{eq:translated_TC}
\begin{pmatrix} 
    \frac{dE}{dt} \\
    \frac{dM}{dt}
    \end{pmatrix} &= \dfrac{D^3 E}{\beta_2 }\begin{pmatrix} \beta_2(\beta _{2}{\gamma _{2}}^2 E M +\beta _{1} {\gamma _{2}}^2 E+ {\beta _{2}}^2\gamma _{2} M)
\\ -\left(\beta _{1}\beta _{2}+\beta _{1}\gamma _{2}+\beta _{2}\gamma _{2} M\right)\left(\beta_1\gamma _{2} + \beta_2 \gamma_2 +\beta _{2}\gamma _{2} M \right) \end{pmatrix} + \varepsilon \tilde{\tilde{F}}_1(E,M,\alpha_1) +\mathcal{O}(\varepsilon^2)
\end{align}
where $D = \beta _{1}\beta _{2}+\beta _{1}\gamma _{2}+\beta _{2}\gamma _{2}+{\beta _{2}}^2+\beta _{2}\gamma _{2} E+\beta _{2}\gamma _{2} M$. 
{
The system is singularly perturbed, with the critical manifold $\tilde{\tilde{S}}_0 := \{E= 0\}$ and corresponding nontrivial eigenvalue
\begin{align*}
    D\tilde{\tilde{f}}_0 \tilde{\tilde{N}}_0|_{\tilde{\tilde{S}}_0} = -\dfrac{\left(\beta _{1}\beta _{2}+\beta _{1}\gamma _{2}-\beta _{2}\gamma _{2} M\right){\left(\beta _{2}+\gamma _{2}+\gamma _{2} M\right)}^3}{{\gamma _{2}}^4}
\end{align*}
which is attracting for $0 \leq M<M_{sBT(B)} = \frac{\beta_1(\beta_2 + \gamma_2)}{\beta_2 \gamma_2}$. There is a loss of normal hyperbolicity at $M= M_{sBT(B)}$, which is a contact point of order one:
\begin{lemma} \label{lemma:sAH}
    Given $\beta_1 > 0$, $(E,M)|_{sBT(B)} = \left(0,\frac{\beta_1(\beta_2 + \gamma_2)}{\beta_2 \gamma_2}\right)$ is a contact point of order one of the layer problem of system \eqref{eq:translated_TC}.
\end{lemma}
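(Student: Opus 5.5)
We verify the contact-point conditions exactly as in the proof of Lemma \ref{lemma:sBT(B)}, using the criterion of Lemma 4.1 in \cite{wechselberger2020}. The layer problem of \eqref{eq:translated_TC} has the factorised form $\tilde{\tilde{N}}_0\tilde{\tilde{f}}_0$ with $\tilde{\tilde{f}}_0 = D^3E$, a scalar function vanishing on $\tilde{\tilde{S}}_0=\{E=0\}$. The fast fibre direction is
\[
\tilde{\tilde{N}}_0 = \beta_2^{-1}\bigl(\beta_2 P(E,M),\,-Q(M)R(M)\bigr)^\top,
\]
where
\[
P = \beta_2\gamma_2^2EM+\beta_1\gamma_2^2E+\beta_2^2\gamma_2M,\quad Q=\beta_1\beta_2+\beta_1\gamma_2+\beta_2\gamma_2M,\quad R=\beta_1\gamma_2+\beta_2\gamma_2+\beta_2\gamma_2M.
\]
First, on $E=0$ we have $D\tilde{\tilde{f}}_0 = (D^3,0)$. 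Hence the nontrivial eigenvalue is $D\tilde{\tilde{f}}_0\tilde{\tilde{N}}_0 = D^3 P|_{E=0} = D^3\beta_2^2\gamma_2 M$ (up to the normalisation used in the stated formula). The first step is to reconcile this with the displayed eigenvalue. I expect the stated expression to come from the full expansion $D\tilde{\tilde{f}}_0 \tilde{\tilde{N}}_0$ after the shift, so I will recompute it directly from \eqref{eq:translated_TC}. I will then locate its simple zero at $M=M_{sBT(B)}=\beta_1(\beta_2+\gamma_2)/(\beta_2\gamma_2)$, where the factor $\beta_1\beta_2+\beta_1\gamma_2-\beta_2\gamma_2M$ vanishes, while $\beta_2+\gamma_2+\gamma_2M>0$. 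This gives condition (i): the Jacobian of the layer vector field at the point is rank-deficient with a one-dimensional kernel, i.e.\ there is a double zero eigenvalue.

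Second, I compute the right null vector $v$ and left null vector $w$ of the full layer Jacobian $J_0=D(\tilde{\tilde{N}}_0\tilde{\tilde{f}}_0)$ at $(0,M_{sBT(B)})$. Since $\tilde{\tilde{f}}_0=0$ there, $J_0=\tilde{\tilde{N}}_0 D\tilde{\tilde{f}}_0$ is rank one. The tangent direction $v=(0,1)^\top$ spans $\ker J_0$, and $w$ annihilates $\tilde{\tilde{N}}_0$, so $w\propto(QR,\,\beta_2P)$ evaluated at the point. At the contact point, $\tilde{\tilde{N}}_0$ must itself be tangent to $\tilde{\tilde{S}}_0$, i.e.\ its $E$-component vanishes. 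I will check this and note that it makes $v\parallel\tilde{\tilde{N}}_0$, which is the geometric signature of the fold/contact.

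Third, the order-one condition requires the nondegeneracy quantity $w\cdot D^2(\tilde{\tilde{N}}_0\tilde{\tilde{f}}_0)(v,v)\neq0$. Equivalently, in the form of \cite{wechselberger2020}, the derivative of the eigenvalue along the manifold must be nonzero:
\[
\partial_M\bigl(D\tilde{\tilde{f}}_0\tilde{\tilde{N}}_0\bigr)\big|_{M_{sBT(B)}} = \frac{\beta_2\gamma_2(\beta_2+\gamma_2+\gamma_2M_{sBT(B)})^3}{\gamma_2^4}\neq0.
\]
This is strictly positive for $\beta_1>0$, because $M_{sBT(B)}>0$ and all parameters are positive. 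Together with $D\tilde{\tilde{f}}_0\neq0$, this gives a simple (quadratic) tangency, i.e.\ a contact point of order one.

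The main obstacle is the bookkeeping. The printed $\tilde{\tilde{N}}_0$ must be evaluated on the manifold $E=0$ consistently with the translation and the time rescaling $dt\to H\,dt$. Because $P|_{E=0}$ vanishes only at $M=0$, I will first confirm which factorisation is intended, i.e.\ whether $\tilde{\tilde{f}}_0=E$ with the $(Q)$-factor sitting in the eigenvalue. I will fix this by directly computing the Jacobian of the $\varepsilon=0$ part of \eqref{eq:translated_TC} at $E=0$ and reading off the eigenvalue, before applying the Hessian test.
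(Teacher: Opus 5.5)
Your overall route is the paper's: show the nontrivial eigenvalue vanishes, then apply a second-order test in the sense of Lemma 4.1 of \cite{wechselberger2020}. However, the nondegeneracy step as you state it fails. The quantity $w\cdot D^2(\tilde{\tilde{N}}_0\tilde{\tilde{f}}_0)(v,v)$ is zero at \emph{every} contact point of a factorised layer problem. Since $\tilde{\tilde{f}}_0(p)=0$ and $D\tilde{\tilde{f}}_0\,v=0$, one gets $D^2(\tilde{\tilde{N}}_0\tilde{\tilde{f}}_0)(v,v)=\tilde{\tilde{N}}_0\,D^2\tilde{\tilde{f}}_0(v,v)$, and $w\cdot\tilde{\tilde{N}}_0=0$. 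So it is not equivalent to $\partial_M\lambda\neq0$, with $\lambda=D\tilde{\tilde{f}}_0\tilde{\tilde{N}}_0$.

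The correct invariant is the second derivative of $\tilde{\tilde{f}}_0$ along the fast fibre, $D\lambda\cdot\tilde{\tilde{N}}_0|_p=\partial_M\lambda(p)\,(\tilde{\tilde{N}}_0)_M(p)$. This product is exactly the number displayed in the paper's proof. Take $\tilde{\tilde{f}}_0=E$ and the normalisation of the displayed eigenvalue. Then it is your factor $\beta_2(\beta_2+\gamma_2+\gamma_2M_{sBT(B)})^3/\gamma_2^3$ times $(\tilde{\tilde{N}}_0)_M(p)=-M_{sBT(B)}(\gamma_2-\beta_1+\gamma_2M_{sBT(B)})(\beta_2+\gamma_2+\gamma_2M_{sBT(B)})^3/\gamma_2^3$.

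You never check the second factor, yet it is the only place the hypothesis $\beta_1>0$ enters. Since $\gamma_2-\beta_1+\gamma_2M_{sBT(B)}=\gamma_2(\beta_1+\beta_2)/\beta_2>0$, we have $\tilde{\tilde{N}}_0(p)\neq0$ iff $M_{sBT(B)}\neq0$ iff $\beta_1\neq0$. Your positivity argument does not depend on $\beta_1$ at all, so it would equally certify $\beta_1=0$. There $p$ collapses to $M=0$, $\tilde{\tilde{N}}_0(p)=0$, and the point is not a fold; this is precisely the stBT degeneracy.

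Second, the bookkeeping you postpone is not cosmetic, and its source is not the factorisation. Your own computation from \eqref{eq:translated_TC} gives $\lambda=D^3\beta_2^2\gamma_2M$ on $E=0$, whose zero is at $M=0$. You then place the zero at $M_{sBT(B)}$ by borrowing the paper's eigenvalue formula, so as written the argument contradicts itself.

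The factorisation $\tilde{\tilde{f}}_0\propto E$ is fine; the display is in effect written with $M$ measured from $M_{sBT(B)}$. Substituting $M\mapsto M-M_{sBT(B)}$ makes the following changes:
\begin{itemize}
\item $D$ becomes $\beta_2(\beta_2+\gamma_2+\gamma_2E+\gamma_2M)$;
\item the first bracket becomes $-\beta_2(\beta_1\beta_2+\beta_1\gamma_2+\beta_1\gamma_2E-\gamma_2^2EM-\beta_2\gamma_2M)$;
\item the second entry becomes $-\beta_2^2\gamma_2M(\gamma_2-\beta_1+\gamma_2M)$.
\end{itemize}
Up to a positive factor from the time rescaling, this is what you get directly from \eqref{scalingC_reduced2} under $E\mapsto E+\beta_2/\gamma_2$.

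So either derive the layer field that way, or stay in the display's coordinates. There the contact point is $(0,0)$ and $\partial_M\lambda=D(0,0)^3\beta_2^2\gamma_2$, with $D(0,0)=(\beta_1+\beta_2)(\beta_2+\gamma_2)>0$. Also $(\tilde{\tilde{N}}_0)_M(0,0)=-Q(0)R(0)/\beta_2=-\beta_1\gamma_2(\beta_1+\beta_2)(\beta_2+\gamma_2)/\beta_2$, which is nonzero exactly when $\beta_1>0$. Their product is the required nondegeneracy quantity.
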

\begin{proof}
    The proof is similar to that of Lemma \ref{lemma:sBT(B)}. Indeed, the rank of $D \tilde{\tilde{f}}_0$ is 1, and evaluating the second non-degeneracy condition (the Hessian bilinear form applied to the left and right nullvectors, as detailed in Lemma 4.1 of \cite{wechselberger2020}) gives
    \[
        -\frac{\left(\beta _{1}\beta _{2}+\beta _{1}\gamma _{2}\right)\left(\gamma _{2}-\beta _{1}+\frac{\beta _{1}\beta _{2}+\beta _{1}\gamma _{2}}{\beta _{2}}\right){\left(\beta _{2}+\gamma _{2}+\frac{\beta _{1}\beta _{2}+\beta _{1}\gamma _{2}}{\beta _{2}}\right)}^6}{{\gamma _{2}}^7} \neq 0.
    \]
\end{proof}

The reduced problem on the critical manifold $\tilde{\tilde{S}}_0$ is then given by:
\begin{align} \label{eq:reduced_TC}
 \dfrac{dM}{dt} = \varepsilon  \tilde{\tilde{R}}_1 = -\varepsilon \frac{M{\left(\beta _{2}+\gamma _{2}+M\gamma _{2}\right)}^4\left(M\alpha _{2}\beta _{2}+\alpha _{1}\delta \gamma _{2}+M\beta _{1}\beta _{2}\delta -M\beta _{2}\delta \gamma _{1}\right)}{\delta {\gamma _{2}}^3\left(\beta _{1}\beta _{2}+\beta _{1}\gamma _{2}-M\beta _{2}\gamma _{2}\right)}.
\end{align}
This system admits two relevant equilibrium branches:
$$\tilde{\tilde{\mathcal{EQ}}}_1 = \{M=0\},\quad \tilde{\tilde{\mathcal{EQ}}}_2 = \left\{M = -\frac{\alpha _{1}\delta \gamma _{2}}{\alpha _{2}\beta _{2}+\beta _{1}\beta _{2}\delta -\beta _{2}\delta \gamma _{1}} \right\}\,.$$
\begin{lemma} \label{lemma:TC}
    For a fixed $\beta_1 > 0$, system \eqref{eq:reduced_TC} undergoes a transcritical (TC) bifurcation at the point $(M,\alpha_1) = (0,0)$.
\end{lemma}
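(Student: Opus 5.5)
I will verify the standard Sotomayor-type conditions for a transcritical bifurcation of a scalar ODE, applied to the right-hand side $g(M,\alpha_1) := \tilde{\tilde{R}}_1(M,\alpha_1)$ of \eqref{eq:reduced_TC} (after dividing out $\varepsilon$, which only rescales time). Near $(M,\alpha_1)=(0,0)$ the denominator $\delta\gamma_2^3(\beta_1\beta_2+\beta_1\gamma_2 - M\beta_2\gamma_2)$ is nonzero because $\beta_1>0$; it equals $\delta\gamma_2^3\beta_1(\beta_2+\gamma_2)>0$ at $M=0$. So $g$ is smooth (indeed analytic) in a neighbourhood of the origin, and the reduced flow is a well-defined scalar ODE there, away from the contact point $M_{sBT(B)}>0$ of Lemma \ref{lemma:sAH}.

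Next I will write $g$ in factored form,
\[
g(M,\alpha_1) = -M\,P(M)\,\bigl(\kappa M + \alpha_1\delta\gamma_2\bigr), \qquad \kappa := \beta_2(\alpha_2+\beta_1\delta-\delta\gamma_1),
\]
with $P(M) = (\beta_2+\gamma_2+M\gamma_2)^4/\bigl(\delta\gamma_2^3(\beta_1\beta_2+\beta_1\gamma_2-M\beta_2\gamma_2)\bigr)$, which is smooth and positive near $M=0$. From this form the conditions follow directly. First, $g(0,\alpha_1)\equiv 0$ for all $\alpha_1$, so $\tilde{\tilde{\mathcal{EQ}}}_1$ persists and $D_{\alpha_1} g(0,0)=0$; this is the structural reason the bifurcation is transcritical and not a saddle-node. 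Second, $D_M g(0,0) = -P(0)\cdot 0 = 0$, so the equilibrium is non-hyperbolic at $\alpha_1=0$. Third, the mixed derivative is $D_{M\alpha_1} g(0,0) = -P(0)\delta\gamma_2 \neq 0$, which is the transversality condition: the eigenvalue $-P(0)\delta\gamma_2\alpha_1$ of $\tilde{\tilde{\mathcal{EQ}}}_1$ changes sign as $\alpha_1$ crosses $0$. Fourth, $D_{MM} g(0,0) = -2P(0)\kappa$, which is nonzero provided $\kappa\neq 0$, i.e.\ $\alpha_2+\beta_1\delta-\delta\gamma_1\neq 0$.

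The main obstacle is this last non-degeneracy condition. I will argue that with the Table \ref{parameter_values_new} values, $\alpha_2-\delta\gamma_1 \approx 0.6225 - 0.0345 >0$, so $\kappa>0$ for every $\beta_1>0$. Thus $D_{MM}g(0,0)\neq0$ throughout the statement's range. This is the same quantity that appears, and is assumed nonzero, in the regular jump point lemma above. I will also note that the same factor guarantees that the second branch $\tilde{\tilde{\mathcal{EQ}}}_2$, given by $M=-\alpha_1\delta\gamma_2/\kappa$, is a graph over $\alpha_1$ crossing $\tilde{\tilde{\mathcal{EQ}}}_1$ transversally at the origin with slope $-\delta\gamma_2/\kappa\neq 0$.

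Finally, I will conclude the exchange of stability explicitly. For $\alpha_1<0$, $\tilde{\tilde{\mathcal{EQ}}}_1$ is unstable and $\tilde{\tilde{\mathcal{EQ}}}_2$ (with $M>0$) is stable. For $\alpha_1>0$ the roles reverse, with $\tilde{\tilde{\mathcal{EQ}}}_2$ moving to $M<0$. By the standard transcritical theorem, e.g.\ \cite{kuznetsov_bifurcation}, this verifies the lemma.
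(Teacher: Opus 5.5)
Your proposal is correct and follows essentially the same route as the paper, which checks $D_M\tilde{\tilde{R}}_1 = D_{\alpha_1}\tilde{\tilde{R}}_1 = 0$, $D_{MM}\tilde{\tilde{R}}_1 \neq 0$ and a strictly negative Hessian determinant in $(M,\alpha_1)$ at the origin. Because $\tilde{\tilde{R}}_1$ is linear in $\alpha_1$, that determinant equals $-\bigl(D_{M\alpha_1}\tilde{\tilde{R}}_1\bigr)^2$, so it is exactly your transversality condition $D_{M\alpha_1}g(0,0)\neq 0$. Your factorisation also justifies $D_{MM}g(0,0)\neq 0$ explicitly, via $\alpha_2-\delta\gamma_1>0$ for the Table values, whereas the paper simply asserts it.
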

\begin{proof}
    Evaluating the vector field \eqref{eq:reduced_TC} at $(M,\alpha_1) = (0,0)$ confirms $D_{M} \tilde{\tilde{R}}_1 = D_{\alpha_1} \tilde{\tilde{R}}_1 = 0$, while the Hessian determinant with respect to $(M, \alpha_1)$ is strictly negative and $D_{MM} \tilde{\tilde{R}}_1 \neq 0$, satisfying the standard non-degeneracy conditions for a TC bifurcation.
\end{proof}
Furthermore, the contact point of order one in Lemma \ref{lemma:sAH} is now a pseudo-singularity:
\begin{corollary} \label{label:cor_TC} $(E,M,\alpha_1)|_{{sAH(B)}} = \left(0, \frac{\beta_1\beta_2 + \beta_1 \gamma_2}{\beta_2\gamma_2},-\frac{\beta _{1}\left(\beta _{2}+\gamma _{2}\right)\left(\alpha _{2}+\beta _{1}\delta -\delta \gamma _{1}\right)}{\delta {\gamma _{2}}^2} \right)$  is a \\ pseudo-singularity of the layer problem of system \eqref{eq:translated_TC}.
\end{corollary}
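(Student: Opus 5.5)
The approach is to check the standard definition of a pseudo-singularity (Definition 4.4 in \cite{wechselberger2020}): a contact point of order one at which the reduced vector field on the critical manifold vanishes, i.e., the numerator of the desingularised reduced flow is zero. The alternative is that the reduced flow has nonzero numerator there, which gives a regular jump point. Lemma \ref{lemma:sAH} already supplies the contact point of order one at $(E,M)=(0,M_{sBT(B)})$ with $M_{sBT(B)}=\frac{\beta_1(\beta_2+\gamma_2)}{\beta_2\gamma_2}$. So it only remains to identify the value of $\alpha_1$ at which the singular denominator and the numerator of \eqref{eq:reduced_TC} vanish simultaneously.

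\textbf{Step 1.} In \eqref{eq:reduced_TC}, the denominator $\beta_1\beta_2+\beta_1\gamma_2-M\beta_2\gamma_2$ vanishes exactly at $M=M_{sBT(B)}$, consistent with the loss of normal hyperbolicity. Apply the time desingularisation $dt\to(\beta_1\beta_2+\beta_1\gamma_2-M\beta_2\gamma_2)\,dt$, as in the passage from \eqref{reduced_scalingB_1D1} to \eqref{reduced_scalingB_1D}. The desingularised field is then
\[
-\frac{\varepsilon}{\delta\gamma_2^3}\,M(\beta_2+\gamma_2+\gamma_2 M)^4\bigl(M\alpha_2\beta_2+\alpha_1\delta\gamma_2+M\beta_1\beta_2\delta-M\beta_2\delta\gamma_1\bigr).
\]

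\textbf{Step 2.} Evaluate this at $M=M_{sBT(B)}$. The factors $M$ and $(\beta_2+\gamma_2+\gamma_2M)^4$ are strictly positive for $\beta_1>0$. Hence the field vanishes if and only if the last factor vanishes, i.e.
\[
\alpha_1\delta\gamma_2=-M_{sBT(B)}\beta_2(\alpha_2+\beta_1\delta-\delta\gamma_1).
\]
Substituting $M_{sBT(B)}$ gives $\alpha_1=-\frac{\beta_1(\beta_2+\gamma_2)(\alpha_2+\beta_1\delta-\delta\gamma_1)}{\delta\gamma_2^2}$, which is the stated value. Equivalently, the equilibrium branch $\tilde{\tilde{\mathcal{EQ}}}_2$ passes through the contact point at exactly this $\alpha_1$.

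\textbf{Step 3.} To match the matrix formulation used in the regular jump point lemma, check that the quantity $\bar N_0\,\adj{(D\tilde{\tilde f}_0\tilde{\tilde N}_0)}\,D\tilde{\tilde f}_0\,\tilde{\tilde F}_1$ vanishes at this point and parameter value. This is the same expression whose nonvanishing previously produced the regular jump. Its second component should be proportional to the numerator factor computed in Step 2, and so vanishes here.

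The main obstacle is Step 3. It requires confirming that the reduced flow \eqref{eq:reduced_TC} computed via the parametrisation method is indeed the projection $\bar N_0\adj(\cdot)D\tilde{\tilde f}_0\tilde{\tilde F}_1$ up to the positive factor. The translation, the time rescaling by $H$, and the scaling $\alpha_1\to\varepsilon\alpha_1$ all have to be tracked, so that the $\alpha_1$-dependence enters at the correct order. I would verify this symbolically. I would also note non-degeneracy: the derivative of the numerator factor with respect to $\alpha_1$ is $\delta\gamma_2\neq0$, so the zero crosses transversally. This gives a generic pseudo-singularity, in line with the sAH(B) interpretation.
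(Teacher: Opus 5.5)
Your proposal is correct and is essentially the paper's argument. The paper combines Lemma~\ref{lemma:sAH} with the observation that $\tilde{\tilde N}_0\,\adj{D\tilde{\tilde f}_0\tilde{\tilde N}_0}\,D\tilde{\tilde f}_0\tilde{\tilde F}_1=\mathbf{0}$ at $\alpha_{1,sAH(B)}$ (your Step~3); your Step~2 reaches the same condition more explicitly by reading off where the numerator of the desingularised reduced flow \eqref{eq:reduced_TC} vanishes at $M_{sBT(B)}$, and the obstacle you anticipate in Step~3 is not a real one. With a one-dimensional critical manifold, both checks reduce to the vanishing of $D\tilde{\tilde f}_0\tilde{\tilde F}_1$ at the contact point, where the fast fibre direction is tangent to $\{E=0\}$ and so has nonzero $M$-component.
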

\begin{proof}
    From Lemma \ref{lemma:sAH}, $(E,M)= \left(0, \frac{\beta_1\beta_2 + \beta_1 \gamma_2}{\beta_2\gamma_2}\right)$ is a contact point of order one. Furthermore, for the given $\alpha_{1,sAH(B)}$, we have that $\tilde{\tilde{N}}_0 \adj{(D\tilde{\tilde{f}}_0 \tilde{\tilde{N}}_0)} D \tilde{\tilde{f}}_0 \tilde{\tilde{F}}_1 = \mathbf{0}$, breaking the condition for a regular jump point; see \cite{wechselberger2020}.
\end{proof}}

{Thus, the parameter rescaling successfully separates the previously identified sBT(B) point from the bifurcation governing the stability change of $\tilde{\mathcal{EQ}}_{tf}$ in system \eqref{scalingC_reduced}. In particular, the equilibrium branch $\tilde{\tilde{\mathcal{EQ}}}_2$ now forms a slanted line in the $(\alpha_1, M)$-plane (Figure \ref{fig:bif_blow_up}), intersecting the origin via a singular TC (sTC) bifurcation. While this sTC bifurcation remains at its original parameter value ($\alpha_{1,sTC} = \alpha_{1,TC}$), the parameter shift transforms the contact point into a true pseudo-singularity located at a distinct $\alpha_1$-value. Following the theory of Krupa and Szmolyan \cite{krupaszmolyan2001fold}, the emergence of this pseudo-singularity confirms that the AH(B) bifurcation emanates from the sBT(B) point.}

\begin{figure}[ht]
\centering
\begin{subfigure}{0.375\textwidth}
\centering
 \includegraphics[width=0.9\linewidth]{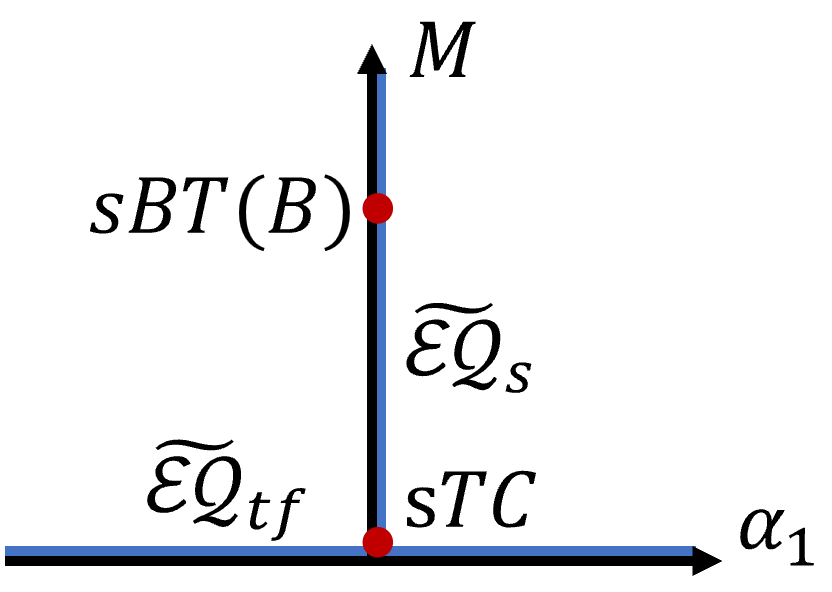}
  \caption{}
\end{subfigure}
\begin{subfigure}{0.375\textwidth}
\centering
  \includegraphics[width=0.9\linewidth]{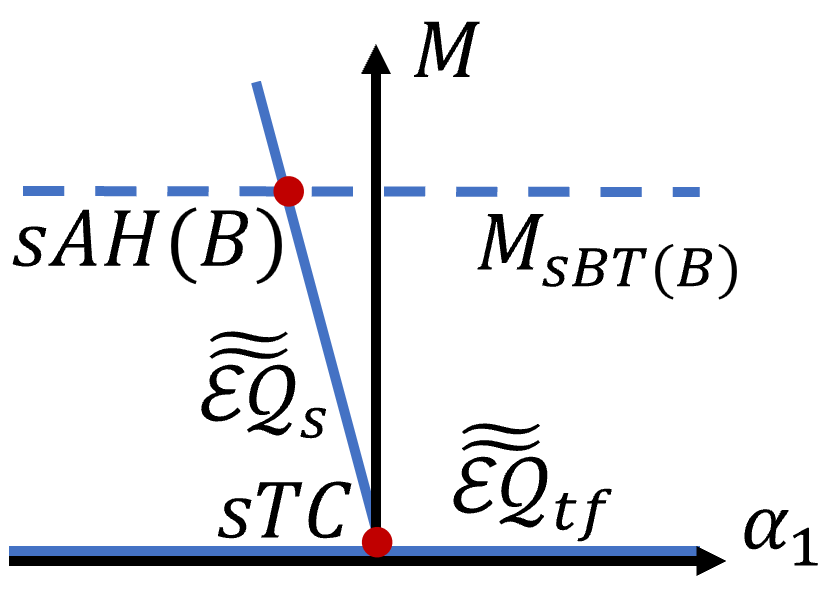}
  \caption{}
\end{subfigure}
\caption{(a) The singular limit before parameter rescaling: the sTC and sBT(B) collide parametrically at $\alpha_1=0$. (b) Bifurcation diagram after the parameter rescaling $\alpha_1 \to \varepsilon \alpha_1$, separating the sTC from the pseudo-singularity/sAH(B). Here $M_{sBT(B)} = \frac{\beta_1\beta_2+\beta_1\gamma_2}{\beta_2\gamma_2}$.}
\label{fig:bif_blow_up}
\end{figure}

Since the parameter blow-up $\alpha_1 \to \varepsilon \alpha_1$ was required to parametrically separate the sTC and the sBT(B), we conclude that the AH(B) bifurcation is $\mathcal{O}(\varepsilon)$-close to the TC bifurcation, satisfying the boundary limit $\phi = \alpha_{1,sTC} - \mathcal{O}(\varepsilon)$ invoked in Theorem \ref{main_theorem}.

\subsubsection{Conjecture: singular transcritical Bogdanov-Takens}
\label{sec:singular_BT_tumour}
{Since the sBT(B)/sAH(B) point spatially and parametrically collides with the sTC point in system \eqref{scalingC_reduced_again} as $(\alpha_1, \beta_1,\varepsilon) \to (0, 0,0)$ (see Figure \ref{fig:bif_blow_up}, where the sBT(B)/sAH(B) complex converges to the sTC), we conjecture that the overarching degenerate structure is a \textit{singular transcritical BT bifurcation (stBT)}. This degenerate intersection acts as a central organising centre when unfolded by the effector supply rate $\alpha_1$, the effector death rate $\beta_1$, and the singular parameter $\varepsilon$.}
\begin{lemma} \label{lemma:BT_normal_sf}
    For $\varepsilon > 0$ sufficiently small, in a local neighbourhood of $(E,M)_{sBT(B)}$ and for $\alpha_1 \approx \alpha_{1,sTC}$, system \eqref{scalingC_reduced_again} can be transformed into the normal form:
    \begin{align} \label{BT_normal_sf}
    \begin{pmatrix}
        \frac{d\xi_1}{dt} \\
        \frac{d \xi_2}{dt} 
    \end{pmatrix} = \begin{pmatrix}
          \xi_2 \\  {B} \xi_1 \xi_2 +\varepsilon \left({\mu}_1  + {\mu}_2 \xi_1 + {A} \xi_1^2\right)
    \end{pmatrix}
    \end{align}
    where $\mu_1,\mu_2,A,$ and $B$ are smooth scalar functions of $(\alpha_1,\beta_1)$.
\end{lemma}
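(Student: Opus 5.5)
We will follow the standard Bogdanov--Takens normal form reduction (cf.~\cite{kuznetsov_bifurcation}), keeping track of which coefficients carry a factor of $\varepsilon$.

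\textbf{Step 1: set up the expansion.} Work with the translated and parameter-rescaled system \eqref{eq:translated_TC}, or equivalently \eqref{scalingC_reduced_again} with $\alpha_1 = \frac{\beta_1\beta_2}{\gamma_2} + \varepsilon\hat\alpha_1$ and $\hat\alpha_1 = \mathcal{O}(1)$. Shift coordinates so that $(E,M)_{sBT(B)}$ sits at the origin. The $\varepsilon$-independent part is then the layer field $\bar N_0\bar f_0$ of \eqref{scalingC_reduced2}. It vanishes identically on the critical manifold $\tilde S_{0,2}$, and its linearisation at the contact point is the nilpotent matrix $J$ of \eqref{J_BT}.

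\textbf{Step 2: straighten the critical manifold.} Choose linear coordinates $\eta_1$ along $\tilde S_{0,2}$ and $\eta_2$ in the fast (range) direction $\bar N_0$. Then the $\varepsilon$-independent part becomes
\[
\dot\eta_1 = 0,\qquad \dot\eta_2 = \eta_2\, g(\eta_1,\eta_2),
\]
where $g(0,0)=0$. Lemma~\ref{lemma:sBT(B)} (contact point of order one) gives $\partial_{\eta_1} g(0,0)\neq 0$. The leading unperturbed quadratic term is therefore $\eta_1\eta_2$, and there is no $\varepsilon$-free $\eta_1^2$ term, because the critical manifold consists of equilibria. This is exactly why $\tilde A$ vanished in Lemma~\ref{lemma:typeB}.

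\textbf{Step 3: collect the $\varepsilon$-terms and put them in normal form.} Set $\xi_1 = c\,\eta_2$, and define $\xi_2 := \dot\xi_1$. With this choice $\dot\xi_1 = \xi_2$ holds exactly. Computing $\dot\xi_2$ and applying near-identity transformations at orders $2$ and $\varepsilon\cdot(\text{orders }0,1,2)$, we remove the $\xi_2^2$ term and the $\varepsilon\xi_2$ terms in the usual way. What remains is $B\xi_1\xi_2$, with $B\neq0$ by Step~2, together with the $\varepsilon$-part of the $\xi_2$-equation evaluated on the slow direction. Expanding that part to second order in $\xi_1$ gives $\varepsilon(\mu_1+\mu_2\xi_1+A\xi_1^2)$.

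\textbf{Step 4: identify the coefficients.} The coefficients $\mu_1$, $\mu_2$, $A$ are read off from $\tilde R_2$ and from $\partial_{\alpha_1}\tilde R_1$ through the projection $\bar N_0\adj{(D\bar f_0\bar N_0)}D\bar f_0$. They are smooth in $(\hat\alpha_1,\beta_1)$, since every quantity involved is rational with nonvanishing denominators near the point. Two consistency checks tie them to earlier results. First, $\mu_1$ must vanish exactly at the pseudo-singularity value of Corollary~\ref{label:cor_TC}. Second, the equilibrium structure must reproduce the TC of Lemma~\ref{lemma:TC}. Remaining terms of order $\mathcal{O}(\varepsilon^2, \varepsilon|\xi|^3, |\xi|^3)$ are absorbed into the neighbourhood.

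\textbf{Main obstacle.} The hard part is Step~3. We must show that the transformations removing $\varepsilon\xi_2$ and $\varepsilon\xi_1\xi_2$ terms do not reintroduce an $\mathcal{O}(1)$ $\xi_1^2$ term. We must also check that the $\varepsilon$-scaled terms really dominate the neglected higher-order terms, which requires working in a weighted scaling $\xi_1\sim\sqrt\varepsilon$, $\xi_2\sim\varepsilon$. We would first verify, using weights, that all discarded terms are of higher weighted order. Only after that would we compute the coefficients explicitly.
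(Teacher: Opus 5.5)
You have a genuine gap in Step~2, and Step~3 inherits it. For orientation, the paper's proof is short: it takes the modified form of Lemma~\ref{lemma:typeB}, uses that $\tilde\mu_1,\tilde\mu_2,\tilde A$ vanish at the shifted point $(\alpha_1,\varepsilon)=(0,0)$, applies $\alpha_1\to\varepsilon\alpha_1$ so that these coefficients become $\mathcal{O}(\varepsilon)$, and factors out $\varepsilon$ at leading order. A direct reduction like yours is a legitimate alternative, but your Step~2 misdescribes the local geometry.

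At the contact point $p$ the first component of $\bar N_0$ vanishes. This is exactly why the eigenvalue $D\bar f_0\bar N_0=-\gamma_2\bar N_{0,1}$ vanishes there. So $\bar N_0(p)$ is parallel to $\partial_M$, i.e.\ tangent to $\tilde S_{0,2}=\{E=\beta_2/\gamma_2\}$. There are therefore no linear coordinates with $\eta_1$ along $\tilde S_{0,2}$ and $\eta_2$ along $\bar N_0$.

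Worse, your model $\dot\eta_1=0$, $\dot\eta_2=\eta_2 g(\eta_1,\eta_2)$ with $g(0,0)=0$ has \emph{zero} linearisation at the origin. By contrast, \eqref{J_BT}, which you yourself invoke in Step~1, is a nonzero nilpotent matrix. Linearisations at an equilibrium are conjugate under diffeomorphisms, so no coordinate change can produce your form. What you wrote is the local picture of two critical branches $\{\eta_2=0\}$ and $\{g=0\}$ crossing transversally, a transcritical layer singularity. The actual situation is a single branch becoming tangent to the fast fibres, which is a fold (contact point of order one).

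This propagates into Step~3. The coordinate $\xi_1=c\eta_2$ is transverse to $\tilde S_{0,2}$, but $\ker J$ is the $M$-direction, so $\xi_1$ must to leading order be the coordinate \emph{along} the critical manifold. In your own setup, the map $(\eta_1,\eta_2)\mapsto(c\eta_2,\tfrac{d}{dt}(c\eta_2))$ has Jacobian determinant $\mathcal{O}(\varepsilon)$ at the origin. It is therefore not a coordinate change uniformly as $\varepsilon\to0$, and ``$B\neq0$ by Step~2'' has no basis.

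The repair is as follows. Take $\xi_1=M-M_{sBT(B)}$ and $\xi_2:=\dot\xi_1$. At $\varepsilon=0$, $\xi_2$ equals $\bar N_{0,2}\bar f_0$. Since $\bar N_{0,2}(p)=\delta\beta_1/\beta_2\neq0$ and $\partial_E\bar f_0=-\gamma_2\neq0$, this is a valid transverse coordinate. At $\varepsilon=0$ one then gets $\dot\xi_2=\xi_2\bigl[D\bar f_0\bar N_0+\mathcal{O}(\xi_2)\bigr]$, and two things follow:
\begin{itemize}
\item This expression vanishes on $\{\xi_2=0\}$, so no $\varepsilon$-free $\xi_1^2$ term can appear in any coordinates with $\dot\xi_1=\xi_2$. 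That settles the first half of your ``main obstacle''.
\item The $\xi_1\xi_2$-coefficient is $B=\partial_M(D\bar f_0\bar N_0)(p)$, which equals $\delta\beta_2/(E+M+1)$ evaluated at $p$. It is nonzero, and its nonvanishing is precisely the order-one contact condition of Lemma~\ref{lemma:sBT(B)}.
\end{itemize}
The $\varepsilon\xi_2$ terms are then removed by an $\mathcal{O}(\varepsilon)$ shift of $\xi_1$, which works because $B\neq0$.

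Finally, your second consistency check is misplaced. For fixed $\beta_1>0$, the TC of Lemma~\ref{lemma:TC} sits at $M=0$, an $\mathcal{O}(1)$ distance $M_{sBT(B)}$ from the contact point, so a local normal form cannot see it. Only the vanishing of $\mu_1$ at the pseudo-singularity of Corollary~\ref{label:cor_TC} is a genuine local check.
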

\begin{proof}
    {Following the approach in the proof of Lemma \ref{lemma:typeB}, recall that the coefficients $\tilde{\mu}_1, \tilde{\mu}_2, \tilde{A} \to 0$ as $(\alpha_1,\varepsilon) \to (0,0)$ (using the shifted $\alpha_1$ coordinate). Applying the parameter rescaling $\alpha_1 \to \varepsilon \alpha_1$ and retaining only the leading-order $\mathcal{O}(\varepsilon)$ terms yields the desired form.}
\end{proof}

{In classical smooth systems undergoing a transcritical Bogdanov-Takens (tBT) bifurcation, the standard saddle-node curve is replaced by a transcritical intersection. In biological models, such transcritical bifurcations are often enforced by an invariant physical boundary \cite{trans_BT}, which in our system corresponds to the tumour-free subspace $W^s_{sub} = \{M,C=0\}$. Furthermore, the vanishing of the normal form coefficients at $\varepsilon=0$ is a structural hallmark of a singular vector field, naturally leading to the conjectured \textit{singular} tBT (stBT) bifurcation. A complete topological unfolding of this stBT requires a full geometric parameter blow-up incorporating $\varepsilon$, $\alpha_1$, and $\beta_1$ simultaneously, a task we reserve for future work.}

\subsection{Improved singular bifurcation diagram}
\label{sec:improved_bif}

By synthesising these local results, we update our singular bifurcation diagram (Figure \ref{fig:improved}). The boundary previously identified as an unknown bifurcation is now confirmed as the parametric collision of the sTC and sBT(B) curves, organised by the stBT point at the origin where they collide spatially.

\subsubsection{Numerical validation and canard explosion}
Figure \ref{2Dbif} presents the two-parameter numerical continuation of the full 3D system \eqref{full_single_epsilon} for $\varepsilon = 0.01$. The boundaries mirror the singular unfolding derived in Figure \ref{fig:improved}, with the AH curves forming a closed loop enclosing the oscillatory regime (Region B).

\begin{figure}[ht]
\centering
\includegraphics[width=0.575\linewidth]{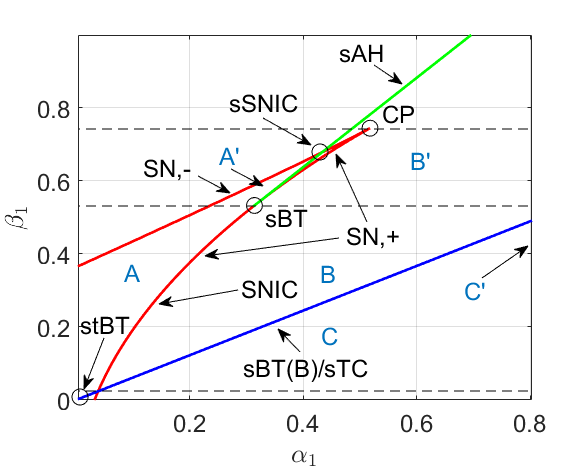}
\caption{An improved partial bifurcation diagram. The blue curve denotes the sTC and sBT(B), which spatially collide at the conjectured stBT point at the origin.}
\label{fig:improved}
\end{figure}

\begin{remark}[The generalised Hopf (Bautin) point]
The numerical presence of the generalised Hopf (GH) points in Figure \ref{2Dbif}d validates our analytical calculation of the first Lyapunov coefficient for the sAH(B) bifurcation, which yielded $\tilde{l}_1>0$ for sufficiently small $\beta_1>0$ (see Section SM4 in Supplementary Material II for details). However, as $(\alpha_1, \beta_1)$ move away from the pseudo-singularity, higher-order $\mathcal{O}(\varepsilon)$ corrections dominate, causing the first Lyapunov coefficient to change sign. 
\end{remark}

{Since the AH bifurcation is subcritical, its limit cycles must terminate before reaching the sTC boundary; that is, they must terminate at some saddle-node of periodic orbits (SNPO) at $\alpha_{1,SNPO} \in (\alpha_{1,sTC} - \mathcal{O}(\varepsilon), \alpha_{1,sTC})$. Homoclinic orbits are structurally forbidden from forming a connection with the saddle tumour-free equilibrium $\mathcal{EQ}_{tf}$ due to its invariant tumour-free subspace $W^s_{sub}$, which persists away from the singular limit (see Section \ref{sec:slow}). Thus, the system undergoes a complete canard explosion, where the small unstable AH limit cycles rapidly grow to become the large relaxation oscillations of Theorem \ref{main_theorem} (see Figure \ref{canard_cycles}).}
 
\subsection{Excitability in Region C}
\label{excitability2}
{
For sufficiently small $\varepsilon > 0$, Region C may be further subpartitioned into Regions C.I, C.II, and C.III based on the unfoldings of the bifurcations established in the blow-up of the origin, i.e., the sAH(B) (with its associated canard explosion and SNPO) and the sTC. Thus, we define: (i) Region C.I, where $\alpha_1\in(\alpha_{1,AH(B)},\alpha_{1,SNPO})$, which exhibits bistability; (ii) Region C.II, where $\alpha_1\in(\alpha_{1,SNPO},\alpha_{1,sTC})$, where $\tilde{\mathcal{EQ}}_{s}$ acts as the global attractor; and (iii) Region C.III, where $\alpha_1 >\alpha_{1,sTC}$, where $\tilde{\mathcal{EQ}}_{tf}$ is the global attractor. }
{Figure \ref{fig:regionC} shows the dynamics in these various subregions. Figure \ref{fig:regionC}a represents Region C.I, where large oscillations are still possible, but some solutions may be trapped inside the boundary enclosed by the unstable AH limit cycle, thus falling into the basin of attraction of $\tilde{\mathcal{EQ}}_s$. }

\begin{figure}[ht]
\centering
\begin{subfigure}{0.45\textwidth}
\centering
  \includegraphics[width=1\linewidth]{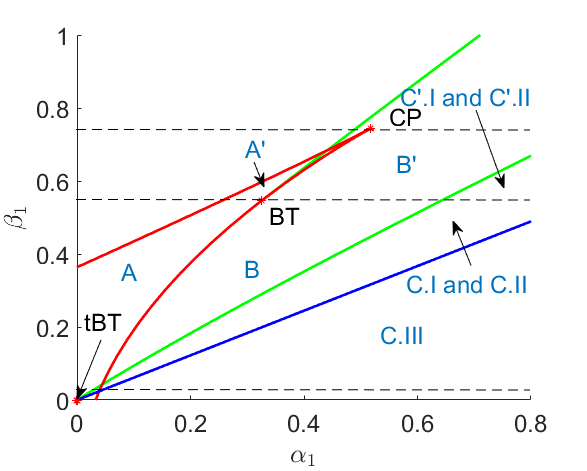}
  \caption{}
\end{subfigure}
\begin{subfigure}{0.45\textwidth}
\centering
  \includegraphics[width=1\linewidth]{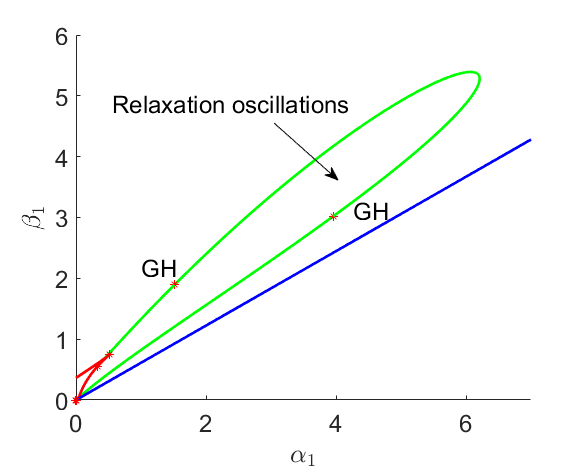}
  \caption{}
\end{subfigure}
\caption{{Partial bifurcation diagram of system \eqref{full_single_epsilon} for varying $(\alpha_1,\beta_1)$ with $\varepsilon = 0.01$. Bifurcation curves (computed via MatCont \cite{matcont}): transcritical (blue), Andronov-Hopf (green), and saddle-node (red). Points: Bogdanov-Takens (BT), transcritical BT (tBT), cusp (CP), and generalised Hopf (GH). (a) Labelled with the relevant parameter regions. (b) A zoom-out showing the closed AH loop enclosing the oscillatory regime.}}
\label{2Dbif}
\end{figure}

{Figure \ref{fig:regionC}b shows a case in Region C.II where the extension of $S_{1,1}^\varepsilon$ near the origin (the green curve) coincides with the unstable manifold of $\tilde{\mathcal{EQ}}_{tf}$, occurring at some $\alpha_{1,con} \in ( \alpha_{1,AH(B)},\alpha_{1,sTC})$. Here, it is clear that closed oscillations are no longer possible due to this connection. Instead, a transient large excursion occurs for solutions beginning to the left of the green curve, which then return to chart $K_5$ and settle on $\tilde{\mathcal{EQ}}_{s}$. We emphasise that $\alpha_{1,SNPO} < \alpha_{1,con}$ since $W_{5,sub}^s$ is an invariant set. Away from the singular limit, flow entering chart $K_5$ from $E\to \infty, M\to 0$ can never lie on $W_{5,sub}^s$; hence, the limit cycles cannot grow large enough to coincide with it, forcing them to terminate via an SNPO at an earlier parameter value $\alpha_1 \in (\alpha_{1,AH(B)}, \alpha_{1,con})$. Figure \ref{fig:regionC}a may also represent a case in Region C.II, where solutions beginning to the left of the green invariant manifold return to chart $K_5$ above it.}

{Recall that it is the possibility of these transient large excursions in Regions C.II and C.III that makes excitable behaviour possible. Furthermore, Region C' undergoes a similar partitioning, where one subregion is bistable (Region C'.I) and the others exhibit excitable behaviour (Regions C'.II and C'.III).}

\begin{figure}[ht]
\centering
\includegraphics[width=0.45\linewidth]{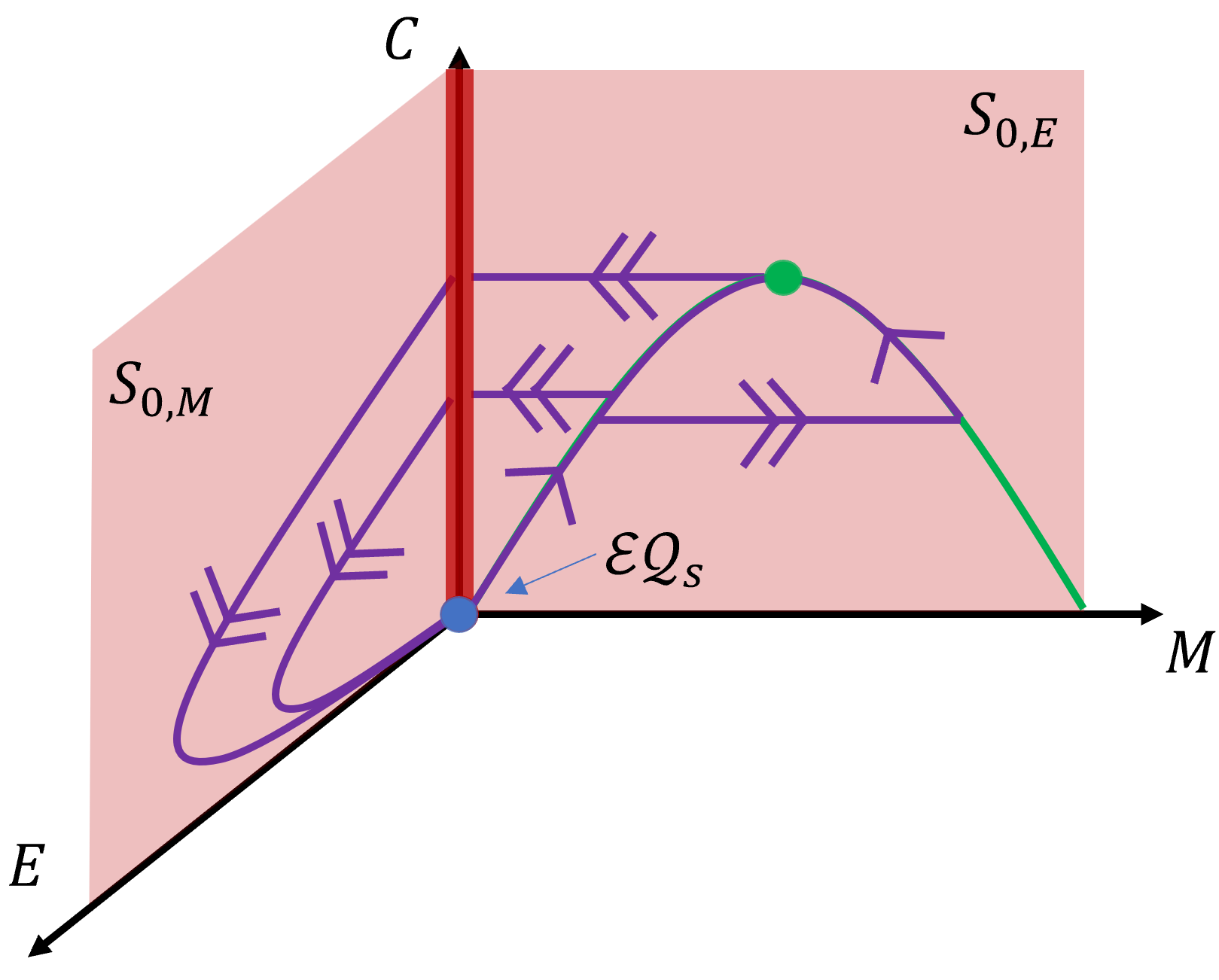}
  \caption{A complete family of singular canard cycles expanding into full relaxation oscillations, trapped away from the tumour-free equilibrium.}
\label{canard_cycles}
\end{figure}

{Figure \ref{fig:regionC}c also shows another case in Region C.II for $\alpha_1 \in (\alpha_{1,con},\alpha_{sTC})$ without the special connection between the green invariant manifold and $\tilde{\mathcal{EQ}}_{s}$. Transient large excursions are again possible if we are to the left of the green invariant manifold. Figure \ref{fig:regionC}d then represents in Region C.III, where the global attractor is now $\tilde{\mathcal{EQ}}_{tf}$ is the global attractor.}

\begin{figure}[ht]
\centering
\begin{subfigure}{0.425\textwidth}
\centering
 \includegraphics[width=0.95\linewidth]{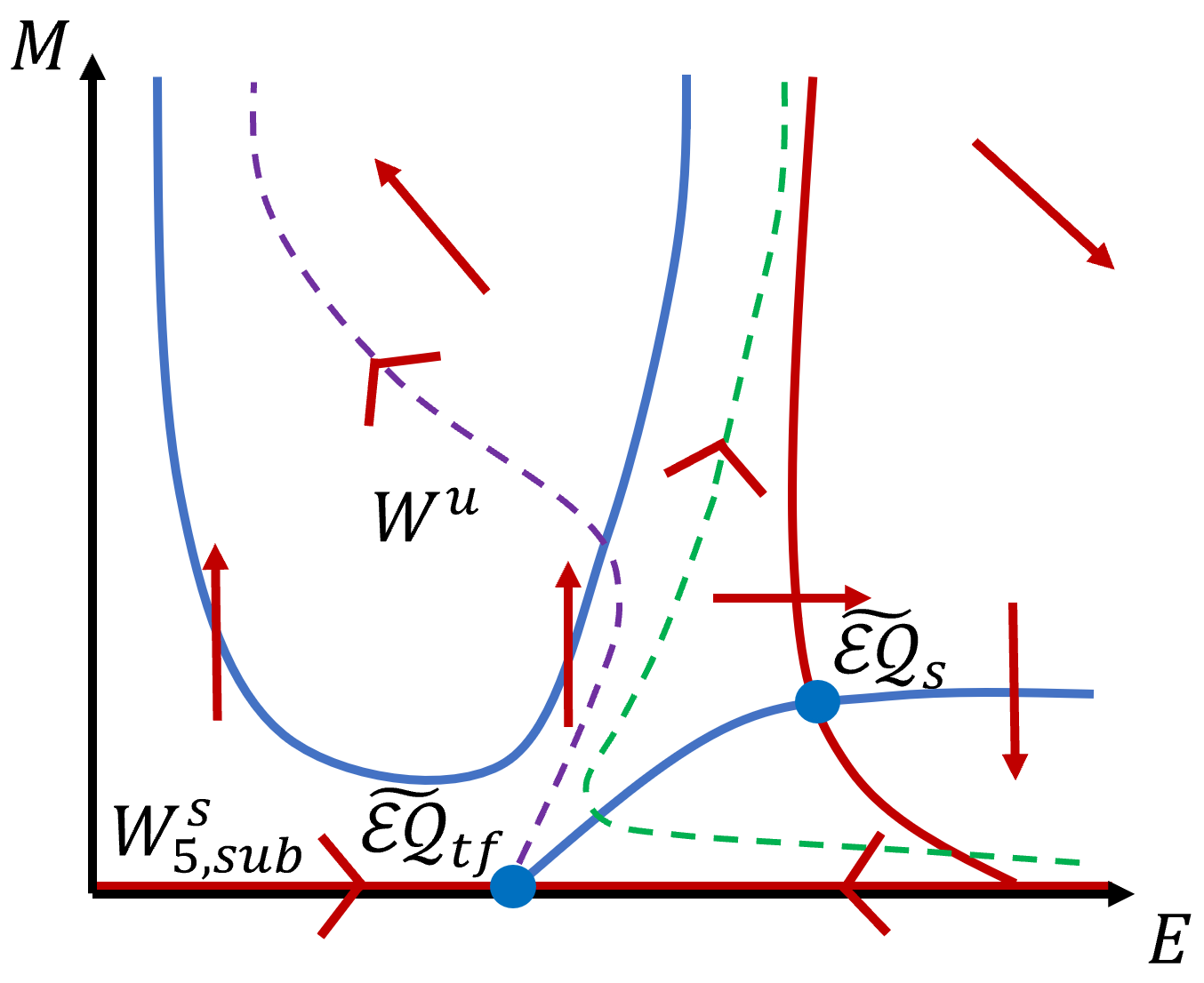}
  \caption{$\alpha_1 < \alpha_{1,con}$}
\end{subfigure}
\begin{subfigure}{0.425\textwidth}
\centering
  \includegraphics[width=0.90\linewidth]{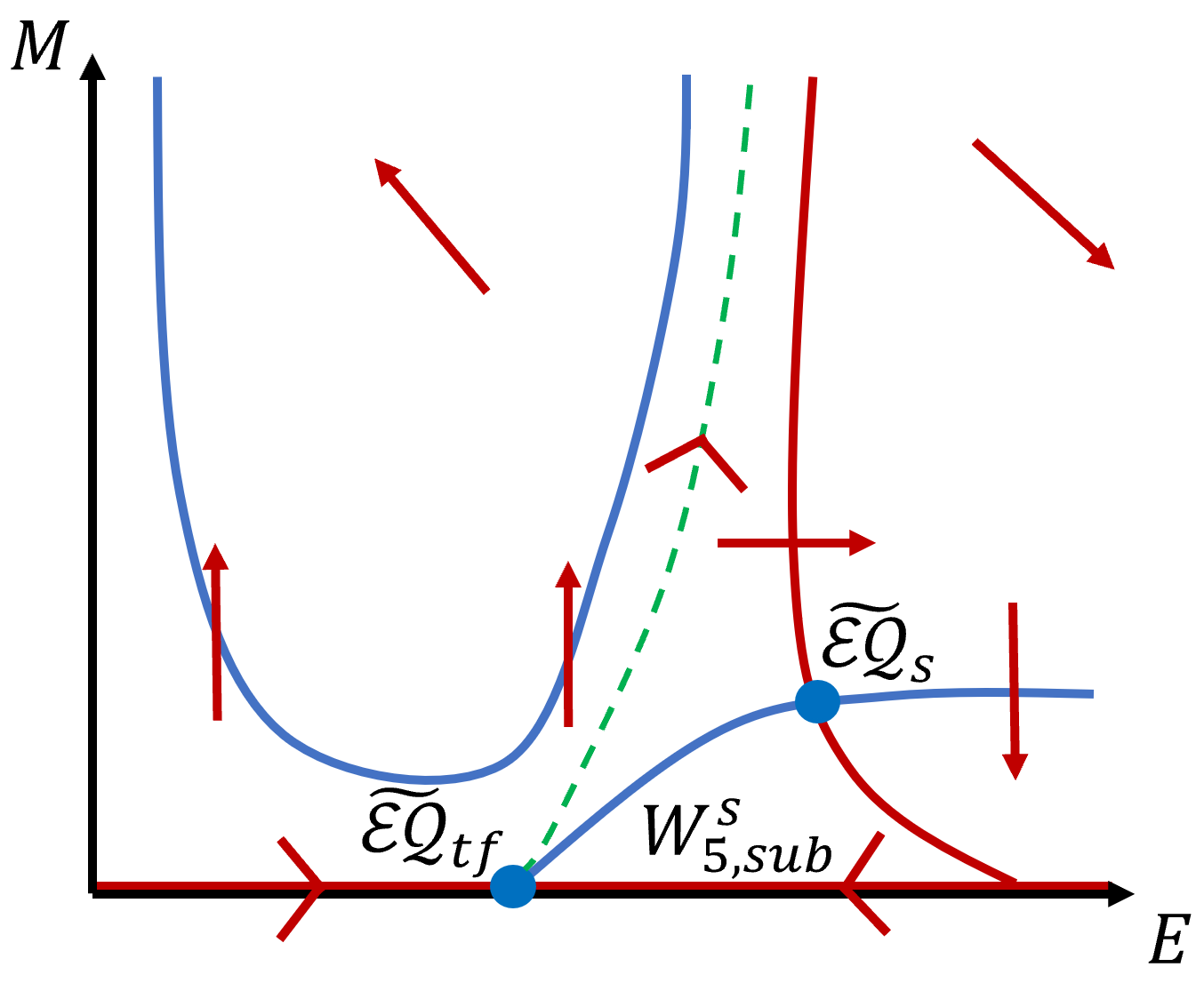}
  \caption{$\alpha_1 =  \alpha_{1,con}$}
\end{subfigure}
\begin{subfigure}{0.425\textwidth}
\centering
 \includegraphics[width=0.95\linewidth]{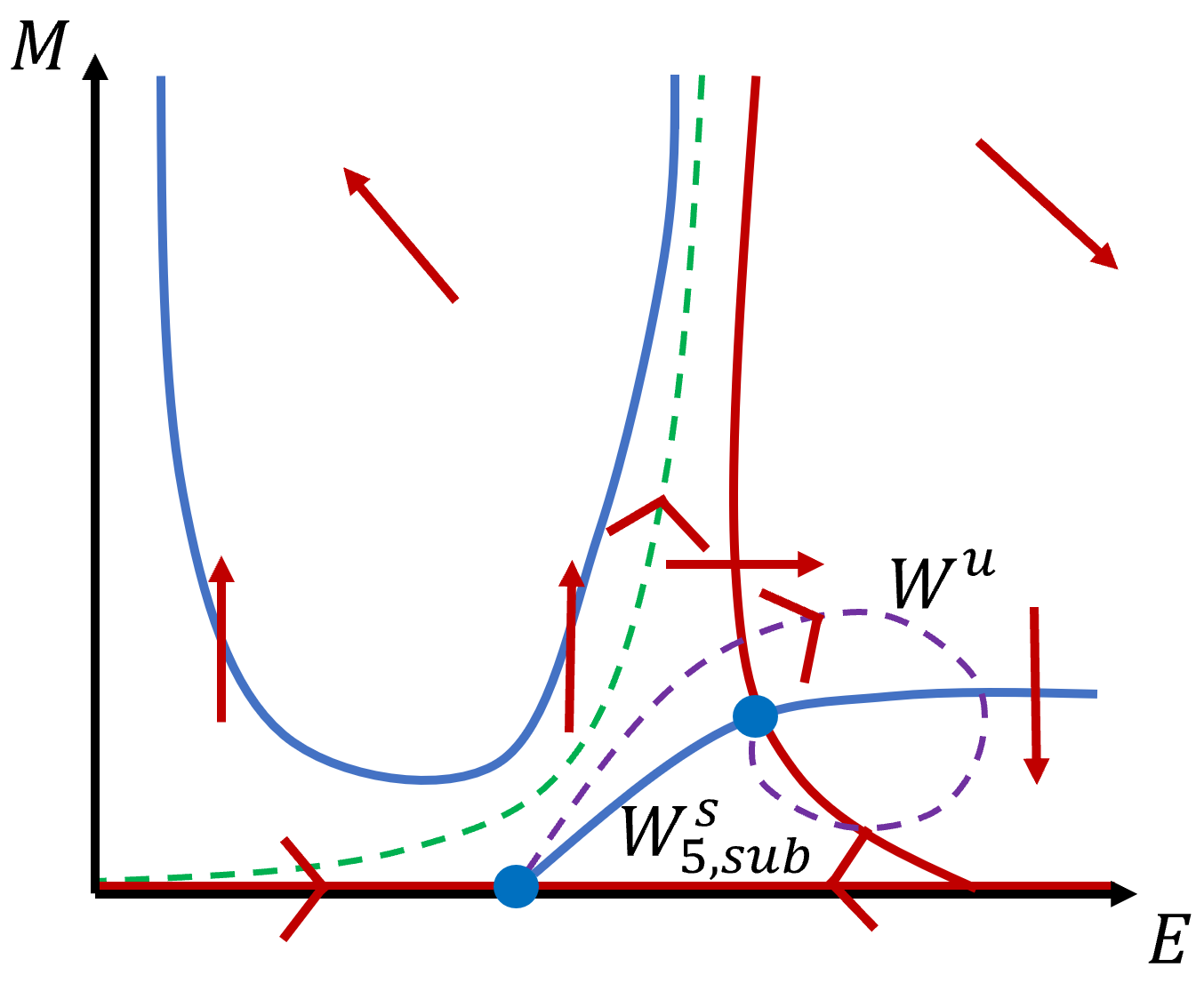}
  \caption{$\alpha_1 \in (\alpha_{1,con},\alpha_{1,sTC})$}
\end{subfigure}
\begin{subfigure}{0.425\textwidth}
\centering
  \includegraphics[width=0.95\linewidth]{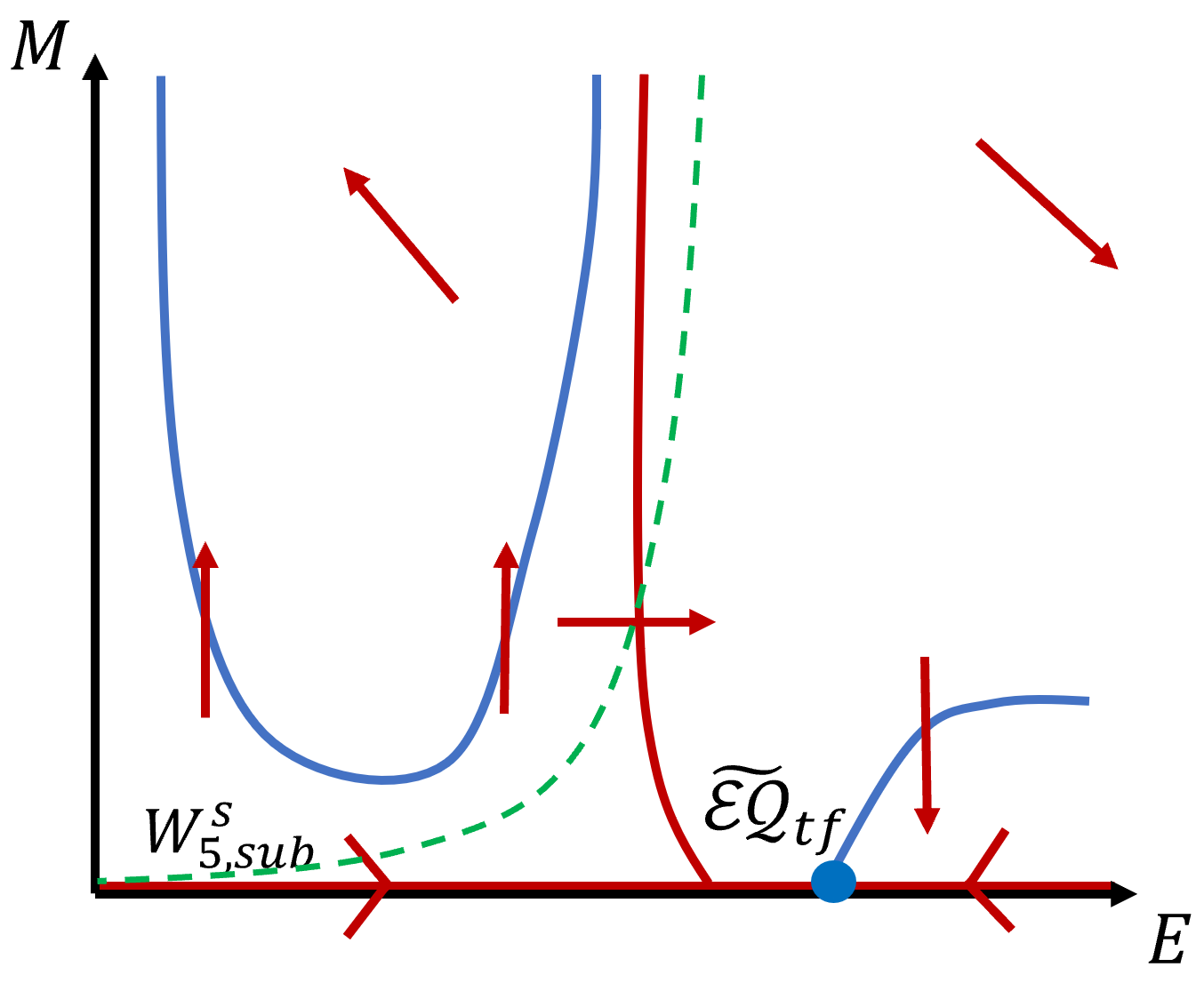}
  \caption{$\alpha_1 > \alpha_{1,sTC}$}
\end{subfigure}
\caption{Phase portrait sketches of system \eqref{scalingC_reduced_again} sub-partitioning Region C dynamics near the origin. The invariant curve (green) acts as a topological separatrix between trajectories that immediately decay and those that undergo transient canard-like excursions before settling on the tumour-free state.}
\label{fig:regionC}
\end{figure}

\section{Conclusion}
\label{conclusion}
In this paper, we presented a rigorous geometric analysis of the five-compartment tumour-immune model introduced by Kuznetsov et al. \cite{kuznetsov} and Osojnik et al. \cite{osojnik}. By uncovering the singular bifurcation structure, we partitioned the parameter space into distinct biological regimes, predicting whether a patient's tumour load will escape, oscillate, or settle into a dormant or eliminated steady state. 

We employed \textit{geometric singular perturbation theory (GSPT)} \cite{fenichel,kuehn2015,jones,wechselberger2020} and the \textit{blow-up method} \cite{dumortierroussarie1996,krupaszmolyan2001fold,szmolyanwechselberger} to formally prove the existence of relaxation oscillations and excitable large excursions. A defining mathematical feature of this system is its three-time-scale structure (Figure \ref{time_scales}), requiring the resolution of dynamics separated by up to three orders of magnitude (from $\mathcal{O}(1)$ to $\mathcal{O}(\varepsilon^3)$). To manage this, we utilised the \textit{parametrisation method} \cite{cabre20031,cabre20032,cabre2005,multiple} to systematically compute the invariant manifolds and their associated slow vector fields. 

This higher-order analysis uncovered a rich variety of singular bifurcations. We identified two singular Andronov-Hopf (sAH) bifurcations that trigger canard explosions. Most notably, at the origin, our analysis revealed a singular transcritical (sTC) bifurcation that collides parametrically with a nilpotent pseudo-singularity (associated with one of the sAH bifurcations) under the variation of $(\alpha_1,\varepsilon)$. Their spatial collision upon further variation of $\beta_1$ is conjectured to form a \emph{singular transcritical Bogdanov-Takens (stBT)} organising centre—a structure yet to be formally analysed in singular systems \cite{tBT,trans_BT,trans_BT2}. 

Furthermore, the parametrisation method proved indispensable for the blow-up analysis itself. We demonstrated that higher-order centre manifold corrections in the exit chart of the cylinder ($K_3$) and the entry chart of the sphere ($K_4$) were required to match the slow vector field on ${S}_{0,M}^\varepsilon$, a technical necessity that, to our knowledge, has not been explicitly addressed in previous blow-up applications \cite{kosiukszmolyan,kosiukszmolyan2,kosiukszmolyan3,process}. 

A comprehensive topological unfolding of the stBT organising centre requires a full geometric parameter blow-up and remains a highly anticipated topic for future research. Likewise, the numerical continuation of the associated saddle-node of periodic orbits (SNPO) and homoclinic bifurcations will provide a more complete partitioning of the parameter space away from the singular limit. 

{As noted in \cite{multiple}, `the parametrisation method may serve as a useful tool, as a more complete theory for the loss of normal hyperbolicity is developed'. Indeed, we have demonstrated that the parametrisation method serves as a vital complementary tool not only to classical Tikhonov-Fenichel theory but also directly aids the geometric blow-up method by uncovering necessary higher-order corrections to the local vector fields.}

\begin{figure}[ht]
\centering
\includegraphics[width=0.475\linewidth]{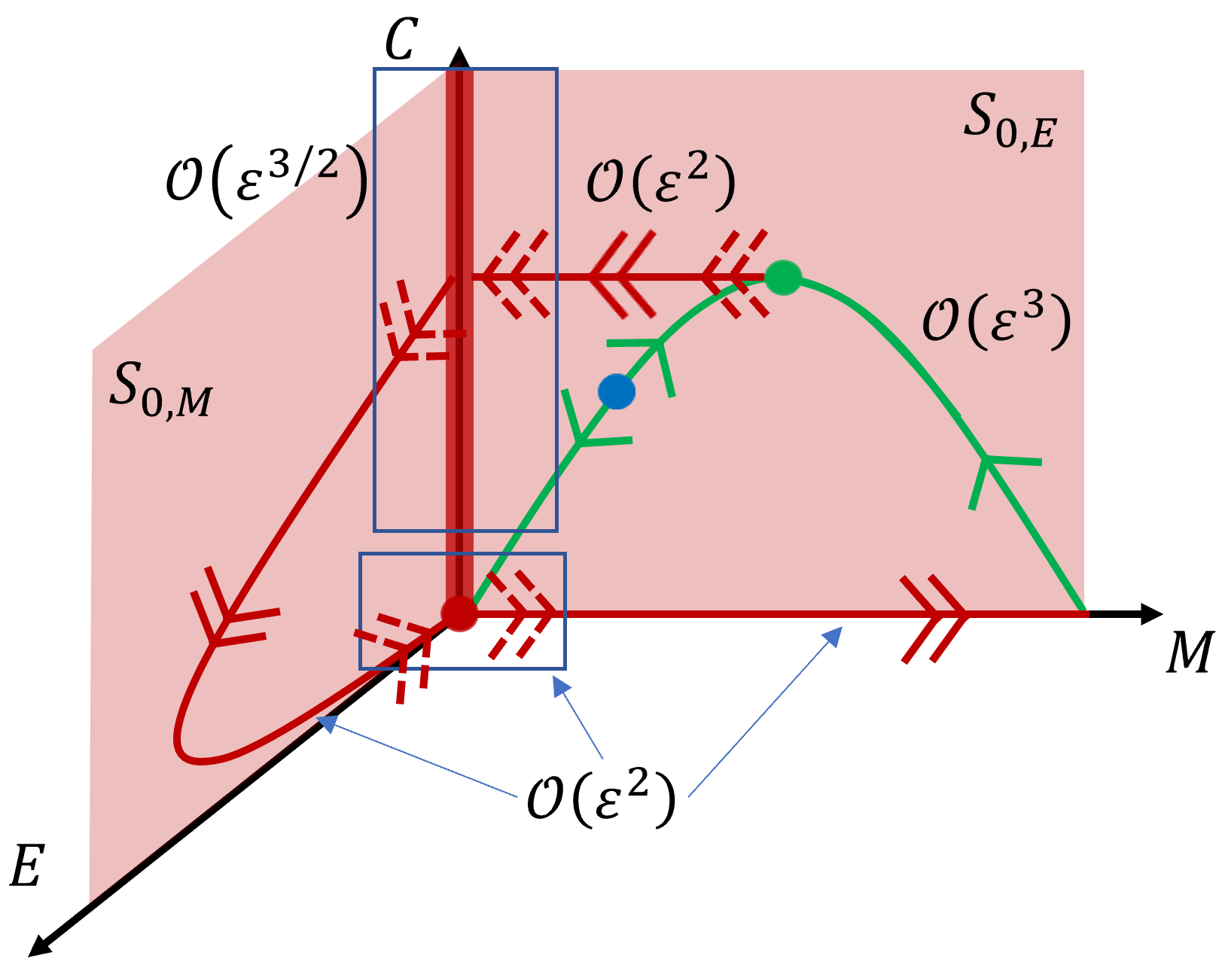}
  \caption{Schematic diagram indicating the distinct time scales of the flow on the critical manifolds ${S}_{0,E}^\varepsilon$ and ${S}_{0,M}^\varepsilon$, and near the non-hyperbolic $C$-axis and origin. The time scales are given with respect to $t$, the fast time of the layer problem of \eqref{full_single_epsilon}.}
\label{time_scales}
\end{figure}

\section*{Acknowledgments}
The first author thanks Vivien Kirk (University of Auckland, NZ) for travel support to the University of Auckland where part of this research was conducted. The first author also thanks John Bailie (University of Auckland, NZ) for discussions on bifurcation theory and the numerical continuation of the AH limit cycles to the relaxation oscillations through an SNPO in Figure \ref{1Dbif}.

\subsection*{Authorship and Contribution} All authors have made substantial intellectual contributions to the study conception, execution, and design of the work. All authors have read and approved the final manuscript. TEFL: formal analysis and investigation, writing, review and editing. MW: conceptualization, supervision, review and editing.

\subsection*{Conflict of Interest}
The authors declare there are no conflicts of interest.

\subsection*{Funding} TEFL acknowledges the support of an Australian Government Research Training Program (RTP) Scholarship. MW acknowledges the support through the Australian Research Council (ARC) Discovery Project grant DP220101817.

\bibliographystyle{siamplain}
\bibliography{references}

\end{document}

%% file: ex_shared.tex
\usepackage{lipsum}
\usepackage{amsfonts}
\usepackage{graphicx}
\usepackage{epstopdf}
\usepackage{algorithmic}
\ifpdf
  \DeclareGraphicsExtensions{.eps,.pdf,.png,.jpg}
\else
  \DeclareGraphicsExtensions{.eps}
\fi
\definecolor{Red}{rgb}{1,0.0.25,0.25}
\definecolor{Green}{rgb}{0.25,0.75,0.25}
\definecolor{Blue}{rgb}{0.1,0.5,1}

\newsiamremark{remark}{Remark}
\newsiamremark{hypothesis}{Hypothesis}
\crefname{hypothesis}{Hypothesis}{Hypotheses}
\newsiamthm{claim}{Claim}
\newsiamremark{fact}{Fact}
\crefname{fact}{Fact}{Facts}

\headers{Blow-Up of a Tumour Model}{T. E. F. Lapuz and M. Wechselberger}

\title{An Example Article\thanks{Submitted to the editors DATE.
\funding{This work was funded by the Fog Research Institute under contract no.~FRI-454.}}}

\author{Dianne Doe\thanks{Imagination Corp., Chicago, IL 
  (\email{ddoe@imag.com}, \url{http://www.imag.com/\string~ddoe/}).}
\and Paul T. Frank\thanks{Department of Applied Mathematics, Fictional University, Boise, ID 
  (\email{ptfrank@fictional.edu}, \email{jesmith@fictional.edu}).}
\and Jane E. Smith\footnotemark[3]}

\usepackage{amsopn}
